\documentclass[letterpaper,10pt,reqno,onefignum,onetabnum]{amsart}
\usepackage[english]{babel}
\usepackage{amsmath}
\usepackage{amsthm}
\usepackage{verbatim}
\usepackage{mathrsfs}
\usepackage{bm}
\usepackage[foot]{amsaddr}
\usepackage[dvipsnames]{xcolor}
\usepackage{hyperref}
\usepackage{cleveref}
\usepackage{algorithm}
\usepackage{algorithmic}
\usepackage{float}
\usepackage{lipsum}
\usepackage{amsfonts}
\usepackage{amssymb}
\usepackage{graphicx}
\usepackage{epstopdf}
\usepackage{cases}
\usepackage{multirow}
\ifpdf
\DeclareGraphicsExtensions{.eps,.pdf,.png,.jpg}
\else
\DeclareGraphicsExtensions{.eps}
\fi
\usepackage{verbatim}
\usepackage{mathrsfs}
\usepackage{bm}

\usepackage{amsthm}
\usepackage{thmtools}
\usepackage{cleveref}
\hypersetup{
	colorlinks = true,
	linkcolor = OliveGreen,
	anchorcolor = OliveGreen,
	citecolor = OliveGreen,
	filecolor = OliveGreen,
	urlcolor = OliveGreen
}

\newtheorem{definition}{Definition}[section]
\newtheorem{teo}{Theorem}[section]
\newtheorem{prop}[teo]{Proposition}
\newtheorem{lem}[teo]{Lemma}
\newtheorem{cor}[teo]{Corollary}

\newtheorem{assumption}[teo]{Assumption}

\newtheorem{rem}[teo]{Remark}

\usepackage{lipsum}
\usepackage{amsfonts}
\usepackage{amssymb}
\usepackage{graphicx}
\usepackage{epstopdf}

\usepackage{cases}
\usepackage{multirow}
\usepackage{tikz}
\usepackage{booktabs}%
\usepackage{hhline}
\usetikzlibrary{matrix}
\usetikzlibrary{arrows}
\ifpdf
\DeclareGraphicsExtensions{.eps,.pdf,.png,.jpg}
\else
\DeclareGraphicsExtensions{.eps}
\fi
\usepackage{verbatim}
\usepackage{mathrsfs}
\usepackage{bm}
\usepackage{color}
\usepackage{xcolor}
\usepackage{caption}
\usepackage{subcaption}
\usepackage{enumitem}
\usepackage{todonotes}
\usepackage{natbib}

\usepackage{geometry}
\numberwithin{equation}{section}
\numberwithin{equation}{section}

\floatname{algorithm}{Line Search}

\DeclareFontEncoding{FMS}{}{}
\DeclareFontSubstitution{FMS}{futm}{m}{n}
\DeclareFontEncoding{FMX}{}{}
\DeclareFontSubstitution{FMX}{futm}{m}{n}
\DeclareSymbolFont{fouriersymbols}{FMS}{futm}{m}{n}
\DeclareSymbolFont{fourierlargesymbols}{FMX}{futm}{m}{n}
\DeclareMathDelimiter{\nr}{\mathord}{fouriersymbols}{152}{fourierlargesymbols}{147}
\DeclareMathOperator*{\argmax}{arg\,max}

\DeclareMathDelimiter{\nr}{\mathord}{fouriersymbols}{152}{fourierlargesymbols}{147}
\DeclareMathAlphabet{\mathpzc}{OT1}{pzc}{m}{it}

\DeclareMathOperator*{\maximize}{maximize}

\def \E{\mathbb{E}}

\def \N{\mathbb{N}}

\def \P{\mathbb{P}}

\def \R{\mathbb{R}}

\def\Fc{{\mathcal F}}

\def\Ic{{\mathcal I}}

\newcommand{\lb}[2]{%
\ifmmode
  \text{\color{red}\textbf{[#1]}}\ {\color{blue}#2}%
\else
  {\color{red}\textbf{[#1]}}\ {\color{blue}#2}%
\fi
}

\title[Multi-Principal Competition for an Exclusive Agent]{ Multi-Principal Competition for an Exclusive Agent: characterization and approximation of Nash Equilibria}

\author{Luis Brice\~no-Arias}
\address{Departamento de Matemática, Universidad Técnica Federico Santa María,
Santiago, Chile}
\email{luis.briceno@usm.cl}

\author{Nicol\'as Hern\'andez-Santib\'a\~nez}
\email{nicolas.hernandezs@usm.cl}

\author{Vicente Moreno-Garrido}
\email{vicente.moreno@usm.cl}

\begin{document}
	\begin{abstract}

This paper introduces and studies a Principal-Agent competition model in which two risk-neutral Principals compete to hire a single Agent. The Agent chooses to work exclusively for one Principal, making the Agent's reservation utility endogenous, as it is determined by the competing offers. We characterize the Nash equilibria of the resulting game by relating them to the individual Principal-Agent contracting problems and analyzing the Agent's selection bias. We further introduce two numerical algorithms to approximate Nash equilibria by locating sign changes of the value functions of the Principals' individual problems. The first is a Newton-type method that exploits differentiability of the value functions, while the second is a bisection method that does not require differentiability. The proposed procedures are illustrated through several examples involving exponential, logarithmic, and constant relative risk aversion (CRRA) utility functions within the polynomial framework of \cite{renner15principal}. We also provide technical results regarding the regularity of the value function of a classical Principal-Agent problem, including differentiability under suitable conditions.

		\par
		\bigskip
		
		\noindent \textbf{Keywords.} \it  Contract theory, competing principals, Nash equilibrium, sum-of-squares polynomial optimization.
		\par
		\bigskip \noindent
		2020 {\it Mathematics Subject Classification.} 91A10, 91B41, 91B43, 90C22.
		
	\end{abstract}
	
	\maketitle

    \textbf{Acknowledgments.} This work was supported by ANID (Chile) under the grants FONDECYT Iniciación 11240944 and FONDECYT Regular 1230257.

\section{Introduction} \label{sec:intro}

\textbf{The Principal-Agent problem.} A Principal-Agent (PA) relationship arises when a first entity (she, the Principal) offers a contract to a second one (he, the Agent) to do some work or action on her behalf. The action performed by the Agent is costly for him and generates an outcome, possibly random, which benefits the Principal. The Principal has imperfect information about the action of the Agent and has to provide incentives in order to align the interests of both parts and make sure the Agent will perform an action that is beneficial for her. The problem of the Principal is to design the contract that maximizes her own benefits, given that the Agent’s satisfaction is sufficiently high. 

The PA problem was introduced in the 1970s as a \emph{static model} 
and studied purely from a game theory point of view, in which the Principal and the Agent play a Stackelberg game. This means mainly that the interaction between both sides is sequential: first, the Principal offers the contract to the Agent and he can accept it or reject it; second, if the Agent accepts the contract, he performs an action which generates the outcome. Generally speaking, the mathematical problem of a \emph{risk-neutral} Principal can be formulated by
\begin{equation}
\label{e:PAmain-riskneutral}\tag{$P$}
\begin{array}{cl} 
\displaystyle\maximize_{(\xi,a)\,\in\,\Xi\times A} & \mathbb{E}^{a}[X-\xi]  \\[0.2cm]
{\scriptstyle \text{subject to}} & \mathbb{E}^{a}[u(\xi)-c(a)] \geq R_0,   \\[0.2cm]
& a \in\displaystyle \argmax_{b \in A} ~ \mathbb{E}^{b}[u(\xi)-c(b)],
\end{array}
\end{equation}
where $\xi$ denotes the contract offered by the Principal, $a$ denotes the optimal action of the Agent, chosen from an admissible set $A$, $X$ is a random variable representing the outcome of the work, $c$ is the cost function of the Agent and $u$ is the utility function of the Agent. Notice that the expectation $\E^a$ represents the fact that the Agent is hired to control the probability distribution of the random variable $X$ and not directly its value. Consequently, the contract $\xi$ has to be understood as a random variable, measurable with respect to $X$. The first constraint in \eqref{e:PAmain-riskneutral}, the \emph{participation constraint}, represents the fact that the expected utility of the Agent under the offered contract must be above a given reservation utility $R_0$ in order to be accepted. The second constraint in \eqref{e:PAmain-riskneutral}, the \emph{incentive compatibility constraint}, represents that the Agent will choose the action which maximizes his own utility since he is economically rational.

The main difficulty in a PA problem stems from the incentive compatibility constraint, which turns the formulation into a bi-level optimization problem. Because no universal methodology exists for solving this class of problems, much of the literature has historically focused on the \emph{first-order approach}, replacing the Agent's global optimization constraint with its first-order optimality condition. For detailed analyses regarding the validity and limitations of this approach, see \cite{mirrlees1976optimal,mirrlees1999theory}, \cite{holmstrom1979moral}, \cite{grossman1992analysis}, \cite{rogerson1985first} and \cite{jewitt1988justifying}. 

To keep track of the data of the Principal-Agent problem, we denote by $V(X,u,c,R_0)$  the value of Problem \eqref{e:PAmain-riskneutral} and by $S(X,u,c,R_0)$ the set of solutions to Problem \eqref{e:PAmain-riskneutral}.

\subsection*{Numerical approach for rational utility functions}

Let us assume that the set of actions of the agent is one-dimensional and compact. Without loss of generality (after rescaling and shifting), suppose $A=[-1,1]$. This case dominates the literature on the first order approach and the applied and computational literature (see \cite{araujo01, Judd98, armstrong10}). In \cite{renner15principal} an algorithm based on a polynomial optimization approach is developed under the assumption that the Agent's utility is a rational function. By using a sum-of-squares of polynomials, an equivalent nonlinear mathematical programming problem is obtained, which is then solved by a global optimization procedure.

Let us detail the main ideas from this approach. Note that the Agent's \emph{incentive compatibility} condition is equivalent to
\begin{equation}
\label{e:red}
\begin{cases}
\mathbb{E}^{b}[u(\xi)-c(b)]\leq \rho,\:\forall b\in A \\
\mathbb{E}^{a}[u(\xi)-c(a)]=\rho,
\end{cases}
\end{equation}
for some $\rho\in\mathbb{R}$. Now, if the expected utility function of the Agent is rational, that is, a quotient of polynomials with the following form 
\begin{equation}
\mathbb{E}^{b}[u(\xi)-c(b)] =\frac{\sum_{i=1}^dc_i(\xi)b^i}{\sum_{i=1}^df_i(\xi)b^i},
\end{equation}
for some $d\in\mathbb{N}$, then \eqref{e:red} is equivalent to
\begin{equation}
\label{e:red2}
\begin{cases}
\rho \sum_{i=1}^df_i(\xi)b^i-\sum_{i=1}^dc_i(\xi)b^i\geq 0,\:\forall b\in A\\
\sum_{i=1}^dc_i(\xi)a^i=\rho \sum_{i=1}^df_i(\xi)a^i.
\end{cases}
\end{equation}
It turns out that the non-negativity of the polynomial $p\colon b\mapsto \rho \sum_{i=1}^df_i(\xi)b^i-\sum_{i=1}^dc_i(\xi)b^i$ of degree $d$ in the first equation in \eqref{e:red2} can be rewritten as a sum of squares of polynomials 
(see, e.g., \cite[Theorem~2.6]{Lasserre10} and \cite[Theorem~3.23]{Laurent09}). Therefore, we have
\begin{equation}
\label{e:eqpol}
(\forall b\in[-1,1])\quad p(b)=\sigma_0(b)+\sigma_1(b) (1-b^2),    
\end{equation}
where $\sigma_0,\sigma_1$ are 
sum of squares, \emph{i.e.}, for every $b\in[0,1]$,
$$\sigma_0(b) =v_n(b)^{\top}Qv_n(b)\quad\text{and}\quad \sigma_1(b) =v_{n-1}(b)^{\top}Rv_{n-1}(b),$$
where $n=[d/2]+1$, for every $k\in\N$,
$v_k(b)=(1,b,\ldots,b^k)$, and $Q=[q_{i,j}]$ and $R=[r_{i,j}]$ are $(n+1)\times (n+1)$ and $n\times n$ symmetric positive semidefinite real matrices, respectively. Then, \eqref{e:PAmain-riskneutral} can be written equivalently as
\begin{equation}
\label{e:relaxAP}
\begin{array}{cl} 
\displaystyle\maximize_{\xi\in\Xi,\, a\in[-1,1],\,\rho\in\mathbb{R},Q,R\succeq 0} & \mathbb{E}^{a}[X-\xi]  \\[0.2cm]
{\scriptstyle \text{subject to}} & 
\sum_{i=1}^dc_i(\xi)a^i-R_0 \sum_{i=1}^df_i(\xi)a^i\geq 0\\[0.2cm]
& \sum_{i=1}^dc_i(\xi)a^i=\rho \sum_{i=1}^df_i(\xi)a^i\\[0.2cm]
&\rho f_0(\xi)-c_0(\xi)=q_{0,0}+r_{0,0}\\[0.2cm]
(\forall k\in\{1,\ldots,d\})\quad& 
\rho f_k(\xi)-c_k(\xi)=\sum_{i+j=k}q_{i,j}+
\sum_{i+j=k}r_{i,j}-\sum_{i+j=k-2}r_{i,j},
\end{array}
\end{equation}
which is a more tractable optimization problem than \eqref{e:PAmain-riskneutral}. In \cite{renner15principal}, Problem \eqref{e:relaxAP} is solved via the globally convergent algorithm Gloptipoly proposed in \cite{Henrion09} and inspired in \cite{Lasserre01}.

\medskip
\subsection*{Two principals in competition}

In many real-life economic transactions, agents receive competing contract offers from multiple employers, clients, or investors. As examples, we can mention key executive hires choosing between rival firms, researchers deciding between incompatible research grants, or specialized suppliers weighing exclusive vendor contracts. In the multi-principal literature, two primary interaction regimes are typically studied based on how the Agent allocates effort: non-exclusive agency, where the Agent accepts contracts from many Principals and distributes effort across different projects, and exclusive agency, where the Agent can accept at most one contract and work exclusively for a single Principal. 

To simplify the exposition, consider two Principals, $P_1$ and $P_2$, each of them attempting to hire the same Agent by offering contracts $\xi_1\in\Xi_1$ and $\xi_2\in\Xi_2$, respectively. In non-exclusive agency, the agent would choose a joint action $(a_1,a_2)\in A_1\times A_2$ which generates an outcome $(X_1,X_2)$ where each coordinate impacts the corresponding Principal. In such a setting, the problem of each principal differs from the classical problem \eqref{e:PAmain-riskneutral}, since the Agent has additional incentives coming from the other contract. They Principals play a game in which both of them provide the reservation utility of the Agent and the action $(a_1,a_2)$ may be legally constrained (\emph{e.g.}, maximum workload). 

In this paper we focus on exclusive agency. We model the case in which the Agent will choose only one of the Principals, let us say $P_i$, and will perform a single action $a_i$ that will generate the outcome $X_i$. Compared to the classic model \eqref{e:PAmain-riskneutral}, the reward functions of the Principals change drastically since one of them will not hire the Agent. In this model, the reservation utility of the Agent becomes endogenous as it is linked to the offers that the two Principals are willing to make. Our goal is to characterize the Nash equilibria of the game in this competition model and, based on \cite{renner15principal}, to propose algorithms to find the equilibrium numerically. We present all our results in the setting with two Principals, although they have natural extensions to the case of more Principals. This choice is made solely to simplify the presentation and notation.


\medskip
\subsection*{Related literature} There are several works that consider more than one principal interacting with a single agent. For instance, \cite{baron1985noncooperative} studies a pollution regulation problem with non-localized externalities. The author analyzes the non-cooperative interaction between two regulators overseeing a monopolistic firm and compares it with the cooperative equilibrium. It is worth noting that the regulators act sequentially with respect to each other, and that their problem involves adverse selection rather than moral hazard with the firm. A more general model is studied in \cite{bernheim1986common}, where many risk-neutral principals hire a common agent who chooses the probabilities with which the outcome takes different (finite) values. The authors study the efficiency and existence of non-cooperative equilibria. \cite{bernheim1985common} show that, in a risk-neutral setting, competing principals without communication may effectively collude when hiring a common agent. In a related contribution, \cite{dixit1997common} develop a general theory of common agency without moral hazard and apply the framework to government policy making. They characterize equilibrium contracts and highlight how coordination issues among principals shape the outcome. \cite{biglaiser1993principals} study a model in which two non-identical principals compete for the exclusive services of an agent, under both moral hazard and adverse selection. They characterize regions of dominance for each of the principals. Multi-market interaction can also arise under a single principal, for example, \cite{braverman1982sharecropping} analyze how a single landlord interlinks credit and land contracts under moral hazard to control the tenant's incentives across multiple domains.

In the case of continuous-time models, the existing literature is relatively scarce. In \cite{mastrolia2018principal}, the authors study the continuous-time multiprincipal-agent problem, assuming that the agent manages a project for each of the principals. The values of the principals are characterized through a system of Hamilton--Jacobi--Bellman (HJB) equations that can be solved when the principals are risk-neutral. In \cite{hu2023principal}, also in continuous time, the switching problem of an agent who works exclusively for one principal at a time is studied. We point out that, in \cite{hu2023principal}, the agent controls only the intensity of a Poisson random measure that drives the switching process.

\subsection*{Overview of the paper} In \Cref{sec:model} we present the competition game between the Principals, a characterization of its Nash equilibria and we propose algorithms for its numerical approximation. In \Cref{sec:numerics}, we present numerical examples including exponential, logarithmic and CRAA utility functions. In \Cref{sec:conclusion} we discuss our results and main contributions. We include auxiliary results, regarding the classical Principal-Agent problem, in the Appendix. 






\section{The competition model and its equilibria} \label{sec:model}

Consider two Principals, $P_1$ and $P_2$, that want to hire the same Agent. We assume that the Agent can be hired by only one of the Principals. Let $X_1$ and $X_2$ be two real-valued independent random variables that represent the outcome of the Agent's work for $P_1$ and $P_2$ respectively. Since the Principals compete for the Agent, we are interested in characterizing the Nash equilibrium of the game they play. The action of the $i-$th Principal consists of a pair $(\xi_i,a_i)\in\Xi_i\times A_i$ of contracts and recommended actions, aimed at maximizing the benefit from $X_i$, whose distribution is controlled by the Agent's action $a_i$. As usual in contract theory, the set $\Xi_i$ is assumed to be some appropriate set of random variables adapted to $X_i$, which means that we can write $\xi_i=w_i(X_i)$ for some measurable function $w_i$.

As already mentioned, we assume that the Agent can only choose one effort, and work for only one of the Principals. Therefore, given a pair of contracts $(\xi_1,\xi_2)$ offered by the Principals, the Agent faces the following problem 
$$
\max \bigg\{ \sup_{a_1\in A_1 }U_1(\xi_1,a_1) , \sup_{a_2\in A_2}U_2(\xi_2,a_2)\bigg\},
$$
where, for $i\in\{1,2\}$, the utility functions are given by $U_i(\xi,a):=\E^{a}[u_i(\xi)-c_i(a)]$, with $u_i:\R\to\R$ and $c_i:A_i\to\R$ being the Agent's utility and cost functions associated to Principal $i$, respectively. We make a distinction on the perception of the Agent when working for the different Principals, in utilities and costs, based on personal preferences. Of course, we can also consider $u_1=u_2$ and $c_1=c_2$. 
For every $i\in\{1,2\}$, we assume that $u_i$ is
strictly increasing, concave (therefore continuous) and differentiable and that $c_i$ is increasing and convex. 

We assume that $A_1$ and $A_2$ are compact subsets of $\R$ and that $\Xi_1\cap\Xi_2$ contains the deterministic contract $0$. Then, for every $i\in\{1,2\}$, we define the following non-empty\footnote{It is easy to see that $(0,\underline{a_i})\in\Sigma_i$ for $\underline{a_i}:=\inf A_i$.} sets
$$
\Sigma_i=\Big\{(\xi,a)\in\Xi_i\times A_i\,:\,a\in\argmax_{b\,\in\,A_i}\, U_i(\xi,b)\Big\}.
$$
The Agent can thus solve independently two optimization problems and compare their values, which naturally defines a partition of the set of joint actions of the Principals
\begin{equation}
\label{e:defC_ii}
\begin{aligned}
C^{0}_i&=\{(\xi_i,a_i,\xi_{-i},a_{-i})\in \Sigma_i\times \Sigma_{-i}:U_i(\xi_i,a_i)=U_{-i}(\xi_{-i},a_{-i})\},\\
C^i_i&=\{(\xi_i,a_i,\xi_{-i},a_{-i})\in \Sigma_i\times \Sigma_{-i}:U_i(\xi_i,a_i)>U_{-i}(\xi_{-i},a_{-i})\},\\
C^{-i}_i&=\{(\xi_i,a_i,\xi_{-i},a_{-i})\in \Sigma_i\times \Sigma_{-i}:U_i(\xi_i,a_i)<U_{-i}(\xi_{-i},a_{-i})\},
\end{aligned}
\end{equation}
where $-i\in\{1,2\}\setminus\{i\}$. In our notation, superscript indicate the Principal who is providing the highest value to the Agent if he accepts the contract (zero indicates equality). Indeed, note that for every $i\in\{1,2\}$ and $(\xi_i,a_i,\xi_{-i},a_{-i})\in \Sigma_i\times \Sigma_{-i}$,
we have 
$$(\xi_i,a_i,\xi_{-i},a_{-i})\in C^0_i\quad\Leftrightarrow\quad (\xi_{-i},a_{-i},\xi_i,a_i)\in C^0_{-i}$$
and 
$$(\xi_i,a_i,\xi_{-i},a_{-i})\in C^i_i\quad\Leftrightarrow\quad (\xi_{-i},a_{-i},\xi_i,a_i)\in C^i_{-i}.$$
The Principals play a game in which the set of actions of Principal $i\in\{1,2\}$ is $\Sigma_i$ and her payoff function is
\begin{align}
\label{e:defJi}
    J_i(X_i,(\xi_i,a_i),(\xi_{-i},a_{-i}),\gamma_i) & =\left\{\begin{array}{ccl}
        \mathbb{E}^{a_i}[X_i-\xi_i], &\text{ if}&(\xi_i,a_i,\xi_{-i},a_{-i})\in C^i_i; \\
        \gamma_i \mathbb{E}^{a_i}[X_i-\xi_i], &\text{ if}&(\xi_i,a_i,\xi_{-i},a_{-i})\in C^0_i;\\
        0, &\text{ if}&(\xi_i,a_i,\xi_{-i},a_{-i})\in C^{-i}_i, \\
    \end{array}\right. 
\end{align}
where $\gamma_i\in[0,1]$ is the Agent's bias towards Principal $i$, that is, the probability of working for her if the two offers provide the same expected utility. We assume $\gamma_1+\gamma_2=1$.

Let us denote by $BR_i(\xi_{-i},a_{-i})$ the best-response set of the $i-$th Principal to the action of the other, and let ${V_i}(\xi_{-i},a_{-i})$ be the corresponding value. Namely
\begin{equation*}
\begin{aligned}
(\forall i\in\{1,2\})\quad V_i(\xi_{-i},a_{-i})&:=&\sup_{(\xi_i,a_i)\in \Sigma_i}&\hspace{0.3cm}J_i(X_i,(\xi_i,a_i),(\xi_{-i},a_{-i}),\gamma_i),
\end{aligned}
\end{equation*}
and $BR_i(\xi_{-i},a_{-i})$ is the, eventually empty, set of maximizers in the definition of $V_i(\xi_{-i},a_{-i})$. We define now the Nash equilibrium of the game played by the Principals.

\begin{definition}
We say that the couple of actions $(\xi_1^\star,a_1^\star),(\xi_2^\star,a_2^\star)$ is a Nash-equilibrium of the game $PPA((X_1,X_2),(u_1,u_2),(c_1,c_2),(\gamma_1,\gamma_2))$ if
\begin{equation*}
        (\xi_1^\star,a_1^\star)\in {BR_1}(\xi_2^\star,a_2^\star), ~ (\xi_2^\star,a_2^\star)\in {BR_2}(\xi_1^\star,a_1^\star).
\end{equation*}
When the context is clear, we simply refer to such a pair as a Nash equilibrium of the PPA game.
\end{definition}

\subsection{Characterization of Nash equilibria}

In this model, from the point of view of the Principals, the reservation utility of the Agent is given endogenously by the offer of the other Principal. In the classical PA problem, we know that the participation constraint of the Agent is satisfied with equality, see  \Cref{lema:matchingR0}. In the next result we show that a necessary condition for a Nash equilibrium is to bind the \emph{participation constraint} in the standard Principal-Agent problem. More precisely, it states that, in a Nash equilibrium, the Agent would receive the same expected utility from any Principal if hired directly. Moreover, none of the Principals will hire the Agent if that results in a loss of utility.


\begin{lem}\label{lema:first-properties}
Let $(\xi_1,a_1),(\xi_2,a_2)$ be a Nash equilibrium of the PPA game.  Then for every $i\in\{1,2\}$
\begin{enumerate}[label=(\roman*)]
    \item\label{lema:first-propertiesi}  $(\xi_i,a_i,\xi_{-i},a_{-i})\in C^0_i$.
    \item\label{lema:first-propertiesii} $J_i(X_i,(\xi_i,a_i),(\xi_{-i},a_{-i}),\gamma_i)\geq0$.
\end{enumerate}
\end{lem}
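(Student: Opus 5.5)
The idea is to use one \emph{utility-shift} perturbation to rule out a strict preference of the Agent and then to rule out negative payoffs. Fix $i\in\{1,2\}$ and a contract $\xi_i$. For $\delta>0$ (small, so that $u_i(\xi_i)-\delta$ stays in the range of $u_i$) define $\xi_i^\delta:=u_i^{-1}\big(u_i(\xi_i)-\delta\big)$. This is well defined because $u_i$ is strictly increasing and continuous. It is still a function of $X_i$, and I will assume, as is implicit in the setting, that $\Xi_i$ is stable under this operation.

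The key identity is
$$U_i(\xi_i^\delta,b)=U_i(\xi_i,b)-\delta\qquad \text{for every } b\in A_i.$$
Hence $\argmax_b U_i(\xi_i^\delta,b)=\argmax_b U_i(\xi_i,b)$, so $(\xi_i^\delta,a_i)\in\Sigma_i$ whenever $(\xi_i,a_i)\in\Sigma_i$. Moreover $\xi_i^\delta<\xi_i$ pointwise, again because $u_i$ is strictly increasing. Therefore $\mathbb{E}^{a_i}[X_i-\xi_i^\delta]>\mathbb{E}^{a_i}[X_i-\xi_i]$, and the Agent's utility from $P_i$ drops by exactly $\delta$.

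\textbf{Proof of (i).} Suppose, by contradiction, that $(\xi_i,a_i,\xi_{-i},a_{-i})\in C^i_i$ for some $i$, i.e.\ $U_i(\xi_i,a_i)>U_{-i}(\xi_{-i},a_{-i})$. Choose $0<\delta<U_i(\xi_i,a_i)-U_{-i}(\xi_{-i},a_{-i})$. Then $(\xi_i^\delta,a_i,\xi_{-i},a_{-i})$ still lies in $C^i_i$. Principal $i$'s payoff therefore rises strictly from $\mathbb{E}^{a_i}[X_i-\xi_i]$ to $\mathbb{E}^{a_i}[X_i-\xi_i^\delta]$. This contradicts $(\xi_i,a_i)\in BR_i(\xi_{-i},a_{-i})$.

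The case $C^{-i}_i$ is the same as $C^{-i}_{-i}$ seen from the other Principal, by the symmetry relations stated after \eqref{e:defC_ii}. Applying the argument to $-i$ excludes it as well. Hence the profile lies in $C^0_i$. This step is the crux: the point to get right is that the shift preserves incentive compatibility exactly. A naive constant shift of $\xi_i$ would not do this, since $u_i$ need not be linear.

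\textbf{Proof of (ii).} By (i), $J_i=\gamma_i\,\mathbb{E}^{a_i}[X_i-\xi_i]$. Suppose $J_i<0$. Principal $i$ deviates to $(\xi_i^\delta,a_i)\in\Sigma_i$ for any small $\delta>0$. Then $U_i(\xi_i^\delta,a_i)=U_{-i}(\xi_{-i},a_{-i})-\delta$, so the new profile lies in $C^{-i}_i$ and her payoff becomes $0>J_i$. This again contradicts the best-response property, so $J_i\ge 0$.

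The case $\gamma_i=0$ is trivial, since then $J_i=0$. The only delicate technical point is the admissibility of $\xi_i^\delta$: membership in $\Xi_i$, and the range of $u_i$ containing $u_i(\xi_i)-\delta$. This is where I would state explicitly any mild assumption needed, for instance that $u_i$ is unbounded below or that $\xi_i$ is bounded away from the lower end of the domain.
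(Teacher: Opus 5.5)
Your proposal is correct and follows essentially the same route as the paper: the downward utility shift $u_i^{-1}(u_i(\xi_i)-\delta)$ (the paper's \Cref{lemma:contratos-perturbados}) keeps $a_i$ optimal, lowers the Agent's utility by exactly $\delta$ and strictly raises Principal $i$'s profit, which rules out a strict preference in (i) (the other strict case is handled by the symmetric argument, as you do), and, combined with (i), gives $J_i\ge 0$ in (ii) by comparing with the deviation payoff $0$. Your caveat about the range of $u_i$ is unnecessary: a concave, strictly increasing $u_i\colon\R\to\R$ is automatically unbounded below, so every downward shift is well defined, and membership of the shifted contract in $\Xi_i$ is assumed implicitly by the paper as well.
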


\begin{proof} Take $i=1$, the other case being analogous. 

\ref{lema:first-propertiesi}: Suppose by contradiction that $(\xi_1,a_1,\xi_2,a_2)\in C^1_1$ (the case $C^2_1$ is analogous). Then, it follows from \eqref{e:defC_ii} that
$$
\Delta:= U_1(\xi_1,a_1)-U_2(\xi_2,a_2)>0.
$$
Let $\varepsilon\in\left]0,\Delta\right[$. By \Cref{lemma:contratos-perturbados}, the contract $\hat{\xi}_1=u_1^{-1}(u_1(\xi_1)-\varepsilon)$ is such that $\hat{\xi}_1<\xi_1$ and 
$$
    a_1\in\argmax_{b\in A_1}U_1(\hat{\xi}_1,b),\quad U_1(\hat{\xi}_1,a_1)=U_1(\xi_1,a_1)-\varepsilon.
$$ 
Therefore $(\hat{\xi}_1,a_1,\xi_2,a_2)\in C^1_1$. Moreover, since $\hat{\xi}_1<\xi_1$, we have
    $$
    J_{1}(X_1,(\hat{\xi}_1,a_1),(\xi_2,a_2),\gamma_1)=\mathbb{E}^{a_1}[X_1-\hat{\xi}_1]
>\mathbb{E}^{a_1}[X_1-\xi_1]
    =J_{1}(X_1,(\xi_1,a_1),(\xi_2,a_2),\gamma),
    $$ 
which contradicts $(\xi_1,a_1)\in BR_1(\xi_2,a_2)$.

\ref{lema:first-propertiesii}: 
For arbitrary $\delta>0$, define the contract $\tilde{\xi}_1:=u_1^{-1}(u_1(\xi_1)-\delta)$. From \ref{lema:first-propertiesi} and \Cref{lemma:contratos-perturbados}.\ref{lemma:contratos-perturbadosi}, we have that $(\tilde{\xi}_1,a_1,\xi_2,a_2)\in C^2_1$ and 
\[
a_1\in\argmax_{b\in A_1}U_1(\tilde{\xi}_1,b).
\]
Since $(\xi_1,a_1)\in BR_1(\xi_2,a_2)$, this implies
$$
J_1(X_1,(\xi_1,a_1),(\xi_2,a_2),\gamma_1)
\geq J_1(X_1,(\tilde{\xi}_1,a_1),(\xi_2,a_2),\gamma_1)=0,
$$
which proves the desired inequality. \qedhere
\end{proof}
In view of \Cref{lema:first-properties}, from now on we will assume that $C^0_1\neq \emptyset$ (and, thus, $C^0_2\neq \emptyset$).  

We now incorporate the role of $\gamma_1$ and $\gamma_2$ to characterize the Nash equilibria of the game.
 The next lemma relates these constants to the sign of the utilities that the Principals would receive via directly contracting the Agent. Note that, as a consequence, their values may rule out the existence of Nash equilibria in the PPA game. This fact has already been mentioned by \cite{biglaiser1993principals}, where the authors assume that an indifferent Agent always works for the Principal with the highest payoff. For the sake of generality, we do not make such an assumption and provide in \Cref{def:consistent-gammas} a consistency property which is clearly satisfied by the aforementioned rule. 
 
 We also prove that in any Nash equilibrium the principals should have different utility signs under the assumption of a strictly positive upper gap given in the following definition. We denote by ${\rm im}\,u=u(\R)$ the image of the real valued function $u\colon\R\to\R$.

 \begin{definition}
(i) For any $i\in\{1,2\}$, let $\xi_i\in\Xi_i$. We define the upper-gap of $\xi_i$ as
\[
\alpha_i(\xi_i):=\sup\{\delta\in\R:u_i(\xi_i)+\delta\leq\sup({\rm im}\,u_i), ~ \P-a.s.\}. 
\]
(ii) We say that $(\xi_1,\xi_2)\in\Xi_1\times\Xi_2$ satisfies \eqref{ass:H} whenever 
\begin{equation}\label{ass:H}\tag{$H$}
(\forall i\in\{1,2\}) \quad \alpha_i(\xi_i)>0.
\end{equation}
\end{definition}

\begin{rem} \label{rem:when-assump-H-holds}
Note that condition \eqref{ass:H} is trivially satisfied if each $u_i$ is surjective or each $\xi_i$ is bounded from above (\emph{e.g.}, when $X_i$ have values in a finite set or $\Xi_i$ contains only bounded random variables). 
\end{rem}


\begin{lem}\label{lema:gamma-01}
Suppose that $(\xi_1,a_1),(\xi_2,a_2)$ be a Nash equilibrium of the PPA game. 
\begin{enumerate}[label=(\roman*)]
    \item\label{lema:gamma-01i} For every $i\in\{1,2\}$, if $\mathbb{E}^{a_i}[X_i-\xi_i]<0$, then $\gamma_i=0$.
    \item\label{lema:gamma-01ii} For every $i\in\{1,2\}$, if $\mathbb{E}^{a_i}[X_i-\xi_i]>0$ and $\alpha_i(\xi_i)>0$, then $\gamma_i=1$.
    \item\label{lema:gamma-01iii} If $(\xi_1,\xi_2)$ satisfies \eqref{ass:H}, there exists $i\in\{1,2\}$ such that $\mathbb{E}^{a_i}[X_i-\xi_i]\geq0$, $\mathbb{E}^{a_{-i}}[X_{-i}-\xi_{-i}]\leq0$.
\end{enumerate}
\end{lem}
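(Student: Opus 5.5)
The argument uses \Cref{lema:first-properties}, which says the equilibrium lies in $C^0_i$, together with the perturbation \Cref{lemma:contratos-perturbados}. That lemma lets us shift $u_i(\xi_i)$ by a constant without changing the Agent's optimal action. The plan is to compare the equilibrium payoff $\gamma_i\mathbb{E}^{a_i}[X_i-\xi_i]$ with the payoff of a slightly perturbed contract.

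For \ref{lema:gamma-01i}, suppose $\mathbb{E}^{a_i}[X_i-\xi_i]<0$ and $\gamma_i>0$. By \Cref{lema:first-properties}\ref{lema:first-propertiesi}, the profile lies in $C^0_i$, so Principal $i$'s payoff is $\gamma_i\mathbb{E}^{a_i}[X_i-\xi_i]<0$. This contradicts \Cref{lema:first-properties}\ref{lema:first-propertiesii}. Alternatively, one can argue directly by deviating to $\tilde\xi_i=u_i^{-1}(u_i(\xi_i)-\delta)$, which lands in $C^{-i}_i$ with payoff $0$. Hence $\gamma_i=0$.

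For \ref{lema:gamma-01ii}, suppose $\mathbb{E}^{a_i}[X_i-\xi_i]=:m>0$, $\alpha_i(\xi_i)>0$, and $\gamma_i<1$. Fix $\varepsilon\in\left]0,\alpha_i(\xi_i)\right[$. Then $u_i(\xi_i)+\varepsilon\in{\rm im}\,u_i$ almost surely; this needs care when $\sup {\rm im}\,u_i$ is attained or not. A safe choice is $\varepsilon<\alpha_i$ strictly, since ${\rm im}\,u_i$ is an interval and so $u_i(\xi_i)+\varepsilon$ lies in its interior region below the sup. Define $\hat\xi_i=u_i^{-1}(u_i(\xi_i)+\varepsilon)$. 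By the upward version of \Cref{lemma:contratos-perturbados}, $a_i$ remains optimal and $U_i(\hat\xi_i,a_i)=U_i(\xi_i,a_i)+\varepsilon$. The profile $(\hat\xi_i,a_i,\xi_{-i},a_{-i})$ therefore lies in $C^i_i$, and Principal $i$ gets $\mathbb{E}^{a_i}[X_i-\hat\xi_i]$.

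The key step is to show $\mathbb{E}^{a_i}[\hat\xi_i-\xi_i]\to0$ as $\varepsilon\to0$. The difference $\hat\xi_i-\xi_i$ decreases pointwise to $0$ by continuity of $u_i^{-1}$ on the interior of the image. Monotone convergence applies because the differences are nonnegative and dominated by the value at some fixed $\varepsilon_0$. That dominating value is integrable if we assume $\mathbb{E}^{a_i}[\hat\xi_i]<\infty$, i.e. $\hat\xi_i$ admissible in $\Xi_i$; I would take this from the standing assumptions or from \Cref{lemma:contratos-perturbados}. Hence for small $\varepsilon$ the deviation payoff exceeds $\gamma_i m$, because $m>\gamma_i m$. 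This contradicts the best-response property, so $\gamma_i=1$. This integrability/continuity step is the main obstacle; concavity of $u_i$ makes $u_i^{-1}$ convex and locally Lipschitz, which also helps.

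For \ref{lema:gamma-01iii}, argue by contradiction. If both expectations were $>0$, then by \ref{lema:gamma-01ii} (using \eqref{ass:H}) we would have $\gamma_1=\gamma_2=1$, contradicting $\gamma_1+\gamma_2=1$. If both were $<0$, then \ref{lema:gamma-01i} gives $\gamma_1=\gamma_2=0$, again a contradiction. The remaining sign patterns fall into the stated form: either one expectation is $\geq0$ and the other is $\leq0$, or both are zero, which also fits. This gives the claim.
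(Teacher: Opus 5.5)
Your proposal is correct and follows the paper's proof essentially step by step. Part (i) is read off from \Cref{lema:first-properties}. Part (ii) uses the upward perturbation $u_i^{-1}(u_i(\xi_i)+\varepsilon)$, which keeps $a_i$ optimal, moves the profile into $C^i_i$, and costs an arbitrarily small amount in expectation. Part (iii) combines (i) and (ii) with $\gamma_1+\gamma_2=1$.

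The integrability issue you flag in (ii) is exactly what the paper settles by citing \Cref{lemma:contratos-perturbados}\ref{lemma:contratos-perturbadosiii}, so that citation closes the step. That lemma's own proof tacitly assumes $\mathbb{E}^{a_i}[1/u_i'(\hat\xi)]<\infty$ at a perturbed contract $\hat\xi$, which implies your condition.
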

\begin{proof} The proof is analogous for each $i\in\{1,2\}$, so we provide a proof for $i=1$. By \Cref{lema:first-properties}\ref{lema:first-propertiesi}, we have $(\xi_1,a_1,\xi_2,a_2)\in C^0_1$ and $J_1(X_1,(\xi_1,a_1),(\xi_2,a_2),\gamma_1)=\gamma_1\mathbb{E}^{a_1}[X_1-\xi_1]\geq0$.

\ref{lema:gamma-01i}: If $\mathbb{E}^{a_1}[X_1-\xi_1]<0$, then it is clear that $\gamma_1=0$. 

\ref{lema:gamma-01ii}: By contradiction, suppose $\mathbb{E}^{a_1}[X_1-\xi_1]>0$, $\alpha_1(\xi_1)>0$ and $\gamma_1<1$. Therefore, we have $(1-\gamma_1)\mathbb{E}^{a_1}[X_1-\xi_1]>0$. 
It follows from \Cref{lemma:contratos-perturbados}\ref{lemma:contratos-perturbadosiii}
that, for $0<\varepsilon<(1-\gamma_1)\mathbb{E}^{a_1}[X_1-\xi_1]$,
there exists $\delta\in \left]0,\alpha_1(\xi_1)\right[$ such that the contract $\hat{\xi}_1=u_1^{-1}(u_1(\xi_1)+\delta)$ satisfies 
$$
0<\mathbb{E}^{a_1}[\hat{\xi}_1-\xi_1]\le\varepsilon,
$$
and from \Cref{lemma:contratos-perturbados}\ref{lemma:contratos-perturbadosi}, we have $(\hat{\xi}_1,a_1,\xi_2,a_2)\in C^1_1$.
Then, we conclude
$$
\begin{aligned} J_1(X_1,(\hat{\xi}_1,a_1),(\xi_2,a_2),\gamma_1)&=\mathbb{E}^{a_1}[X_1-\hat{\xi}_1]\\
&=\mathbb{E}^{a_1}[X_1-\xi_1]+\mathbb{E}^{a_1}[\xi_1-\hat{\xi}_1]\\
&>\mathbb{E}^{a_1}[X_1-\xi_1]-(1-\gamma_1)\mathbb{E}^{a_1}[X_1-\xi_1]\\
&=\gamma_1\mathbb{E}^{a_1}[X_1-\xi_1]\\
&=J_1(X_1,(\xi_1,a_1),(\xi_2,a_2),\gamma_1),
\end{aligned}
$$
which is a contradiction with $(\xi_1,a_1)\in BR_1(\xi_2,a_2)$. 

\ref{lema:gamma-01iii}:
Suppose by contradiction that $\mathbb{E}^{a_1}[X_1-\xi_1]$ and $\mathbb{E}^{a_2}[X_2-\xi_2]$ are both strictly positive or both strictly negative. If both are strictly positive, then \eqref{ass:H} and \Cref{lema:gamma-01}\ref{lema:gamma-01ii} imply that $\gamma_1=1=\gamma_2$, which contradicts $\gamma_1+\gamma_2=1$. The argument is similar if both are strictly negative, since \Cref{lema:gamma-01}\ref{lema:gamma-01i} would imply $\gamma_1=0=\gamma_2$.
\qedhere
\end{proof}

Since the previous result gives us a necessary relation between the values of $\gamma_1$ and $\gamma_2$ and 
a Nash equilibrium, we provide the following definition to characterize equilibria.

\begin{definition}\label{def:consistent-gammas}
Given $(\xi_1,a_1,\xi_2,a_2)\in C_1^0$, we say that
$(\gamma_1,\gamma_2)$ is consistent with $(\xi_1,a_1,\xi_2,a_2)$ if 
$$(\forall i\in\{1,2\})\quad 
\begin{cases}
\mathbb{E}^{a_i}[X_i-\xi_i]<0\:\:\Rightarrow\:\:\gamma_i=0,\\
\mathbb{E}^{a_i}[X_i-\xi_i]>0\:\:\Rightarrow\:\:\gamma_i=1.
\end{cases}
$$    
\end{definition}

It is clear from \Cref{lema:gamma-01} that $(\gamma_1,\gamma_2)$ has to be consistent with any Nash equilibrium. Moreover, we know from \Cref{lema:gamma-01}\ref{lema:gamma-01iii} that in every Nash equilibrium, one (and only one) of the Principals would make a profit (eventually zero) if hiring the agent directly. The following result provides a characterization of the best-response strategies of a Principal $i\in\{1,2\}$ through her individual contracting problem. Firstly, the Principal who can make a profit will actually play (in the Principal's game) a strategy that solves her individual contracting problem. Secondly, the Principal who would make a loss from hiring the agent has no chance of making a profit since, for the corresponding endogenous reservation utility, her individual contracting problem has a negative value. The second property holds under a slightly stronger condition that we introduce next.


\begin{definition}
Let $i\in\{1,2\}$. We say that $\ell\in\R$ satisfies \eqref{ass:HH} whenever 
\begin{equation}\label{ass:HH}\tag{$H^\prime_i$}
\forall (\eta_i,\beta_i)\in\Sigma_i \text{ s.t. } U_i(\eta_i,\beta_i)=\ell, \quad \alpha_i(\eta_i)>0.
\end{equation}
\end{definition}

\begin{rem}
Note that Condition \eqref{ass:HH} is also satisfied in the cases described in \Cref{rem:when-assump-H-holds}.    
\end{rem}

\begin{prop} \label{prop:equiv-BR}
Let $(\xi_1,a_1,\xi_2,a_2)\in C^0_1$ and suppose that $(\gamma_1,\gamma_2)$ is consistent with $(\xi_1,a_1,\xi_2,a_2)$. Define $R_0\in\R$ as the common expected utility of the Agent, i.e., 
    $$
    R_0:=U_1(\xi_1,a_1)=U_2(\xi_2,a_2).
    $$
 Then, for every $i\in\{1,2\}$, the following hold:
\begin{enumerate}[label=(\roman*)]

\item\label{prop:equiv-BRi} Suppose $\mathbb{E}^{a_i}[X_i-\xi_i]> 0$.
Then, 
 $(\xi_i,a_i)\in BR_i(\xi_{-i},a_{-i})$
 if and only if 
 $(\xi_i,a_i)\in S(X_i,u_i,c_i,R_0)$.

\item\label{prop:equiv-BRii} Suppose $\mathbb{E}^{a_i}[X_i-\xi_i]\le 0$.
Then, the following hold:
\begin{enumerate}
    \item\label{prop:equiv-BRiia} If $V(X_i,u_i,c_i,R_0)\le 0$, then $(\xi_i,a_i)\in BR_i(\xi_{-i},a_{-i})$.
    \item\label{prop:equiv-BRiib} If $R_0$ satisfies \eqref{ass:HH} and $(\xi_i,a_i)\in BR_i(\xi_{-i},a_{-i})$, then $V(X_i,u_i,c_i,R_0)\le 0$.
\end{enumerate}

\end{enumerate}
\end{prop}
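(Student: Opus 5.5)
We will work with $i=1$; the case $i=2$ is symmetric. The tools are the perturbation results of \Cref{lemma:contratos-perturbados} (shifting a contract in utility keeps $a_1$ optimal and moves $U_1$ by exactly the shift) and \Cref{lema:matchingR0} (the participation constraint binds in the classical problem). The common thread is a dictionary between the payoff $J_1$ and the classical problem with reservation utility $R_0$. For $(\eta,\beta)\in\Sigma_1$, the payoff $J_1$ equals $\mathbb{E}^{\beta}[X_1-\eta]$ if $U_1(\eta,\beta)>R_0$. It equals $\gamma_1\mathbb{E}^{\beta}[X_1-\eta]$ if $U_1(\eta,\beta)=R_0$, and $0$ if $U_1(\eta,\beta)<R_0$. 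The feasible set of $(P)$ with reservation $R_0$ is exactly $\{(\eta,\beta)\in\Sigma_1: U_1(\eta,\beta)\ge R_0\}$.

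For \ref{prop:equiv-BRi}, consistency gives $\gamma_1=1$, so $J_1(\xi_1,a_1)=\mathbb{E}^{a_1}[X_1-\xi_1]$.

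For ($\Leftarrow$), let $(\xi_1,a_1)\in S(X_1,u_1,c_1,R_0)$. Any $(\eta,\beta)\in\Sigma_1$ with $U_1(\eta,\beta)\ge R_0$ is feasible for $(P)$, so $J_1(\eta,\beta)\le \mathbb{E}^\beta[X_1-\eta]\le V(X_1,u_1,c_1,R_0)=J_1(\xi_1,a_1)$. If $U_1(\eta,\beta)<R_0$, then $J_1(\eta,\beta)=0<J_1(\xi_1,a_1)$. Hence $(\xi_1,a_1)$ is a best response.

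For ($\Rightarrow$), suppose some feasible $(\eta,\beta)$ for $(P)$ has $\mathbb{E}^\beta[X_1-\eta]>\mathbb{E}^{a_1}[X_1-\xi_1]$. If $U_1(\eta,\beta)>R_0$, this already contradicts the best-response property. If $U_1(\eta,\beta)=R_0$, we need a slight upward perturbation: take $\hat\eta=u_1^{-1}(u_1(\eta)+\delta)$ with $\delta$ small, using \Cref{lemma:contratos-perturbados}\ref{lemma:contratos-perturbadosiii} to keep $\mathbb{E}^\beta[\hat\eta-\eta]$ below the strict gap. Then $(\hat\eta,\beta)$ is strictly preferred by the Agent and yields a higher $J_1$, a contradiction.

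This perturbation requires $\alpha_1(\eta)>0$, which is not assumed in \ref{prop:equiv-BRi}. This is the main obstacle. My first remedy is the reverse perturbation: by \Cref{lema:matchingR0}, the classical problem lets us lower utility at binding points. So I would instead take $\eta$ with $U_1(\eta,\beta)>R_0$ strictly, which is possible if the sup of $(P)$ is approached by points with slack. Failing that, I would argue via continuity of $V$ in $R_0$ from the appendix, approximating $V(X_1,u_1,c_1,R_0)$ by values at $R_0+\varepsilon$. I would first check whether the appendix results already guarantee that strict-slack points approximate the value, since that is the cleanest route.

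For \ref{prop:equiv-BRiia}, consistency and $\mathbb{E}^{a_1}[X_1-\xi_1]\le0$ give $J_1(\xi_1,a_1)=0$: either $\gamma_1=0$, or the expectation is $0$. For any $(\eta,\beta)\in\Sigma_1$, if $U_1(\eta,\beta)\ge R_0$ then $J_1(\eta,\beta)\le\max\{0,\mathbb{E}^\beta[X_1-\eta]\}\le\max\{0,V\}=0$, and otherwise $J_1=0$. So $V_1(\xi_2,a_2)=0=J_1(\xi_1,a_1)$, which proves the claim.

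For \ref{prop:equiv-BRiib}, argue by contradiction: assume $V(X_1,u_1,c_1,R_0)>0$. Pick a feasible $(\eta,\beta)$ with $\mathbb{E}^\beta[X_1-\eta]>0$ and, by \Cref{lema:matchingR0} or the perturbation downward, $U_1(\eta,\beta)=R_0$. Hypothesis \eqref{ass:HH} gives $\alpha_1(\eta)>0$. Then \Cref{lemma:contratos-perturbados}\ref{lemma:contratos-perturbadosi},\ref{lemma:contratos-perturbadosiii} produce $\hat\eta$ with $U_1(\hat\eta,\beta)>R_0$, $\beta$ still optimal, and $\mathbb{E}^\beta[X_1-\hat\eta]>0$. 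Hence $V_1(\xi_2,a_2)>0=J_1(\xi_1,a_1)$, contradicting the best-response property.
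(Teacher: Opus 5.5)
\paragraph{Gap in direction $(\Rightarrow)$ of (i).} The gap is in the tie case $U_1(\eta,\beta)=R_0$. The ``main obstacle'' you identify there does not exist.

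You already wrote down the two facts that settle it:
\begin{itemize}
\item on the tie set the payoff is $J_1(\eta,\beta)=\gamma_1\mathbb{E}^{\beta}[X_1-\eta]$;
\item consistency forces $\gamma_1=1$ when $\mathbb{E}^{a_1}[X_1-\xi_1]>0$.
\end{itemize}
Combining them, a feasible tie point with $\mathbb{E}^{\beta}[X_1-\eta]>\mathbb{E}^{a_1}[X_1-\xi_1]$ gives $J_1(\eta,\beta)=\mathbb{E}^{\beta}[X_1-\eta]>J_1(\xi_1,a_1)$. This contradicts the best-response property immediately, with no upward perturbation and no upper-gap condition. It is exactly the paper's argument: with $\gamma_i=1$, the description of $BR_i$ over $\Gamma^0_i\cup\Gamma^i_i$ coincides word for word with optimality in $(P)$ at level $R_0$.

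As written, though, your proof does not close this direction, because neither remedy you sketch is available under the hypotheses of (i):
\begin{itemize}
\item The natural way to approximate a tie point by points with slack is the shift $u_1^{-1}(u_1(\eta)+\delta)$, which requires $\alpha_1(\eta)>0$.
\item Right-continuity of $V(X_1,u_1,c_1,\cdot)$ at $R_0$ is only obtained in the appendix (Lemma~\ref{lemma:continuidad-V} and the remark following it) from a solution at $R_0$ with positive upper gap, which (i) does not provide.
\end{itemize}
Either way you would be proving (i) under a hypothesis the statement deliberately omits.

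\paragraph{The remaining parts.} These are correct.

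Your $(\Leftarrow)$ in (i) and your (ii)(a) are the paper's argument, using the same dictionary between $J_1$ and the feasible set of $(P)$ at level $R_0$.

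In (ii)(b) you take a slightly different route from the paper:
\begin{itemize}
\item Yours: first push a profitable feasible pair down to the tie level by a negative utility shift, which only increases the Principal's profit, so that \eqref{ass:HH} applies; then shift it up slightly.
\item The paper's: split into $\Gamma^i_i$ and $\Gamma^0_i$ with $\gamma_i>0$ (both immediate), and $\Gamma^0_i$ with $\gamma_i=0$, which is the only case where the upward perturbation is used.
\end{itemize}
Both routes work. However, \Cref{lema:matchingR0} cannot be invoked in that step, since your pair need not solve $(P)$. The correct justification is the downward shift from \Cref{lemma:contratos-perturbados}\ref{lemma:contratos-perturbadosi}, which is legitimate for every $\delta<0$ because $\alpha_1(\eta)\ge 0$ always.
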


\begin{proof}
Define the sets
$$(\forall i\in\{1,2\})(\forall j\in\{0,i,-i\})\quad 
\Gamma^j_i(\xi_{-i},a_{-i}):=\{(\eta_i,\beta_i)\in\Sigma_i\,:\,(\eta_i,\beta_i,\xi_{-i},a_{-i})\in C^j_i\}.
$$
Note that, for every $i\in\{1,2\}$, $\Sigma_i=\Gamma^0_i(\xi_{-i},a_{-i})\cup \Gamma^i_i(\xi_{-i},a_{-i})\cup \Gamma^{-i}_i(\xi_{-i},a_{-i})$
and also
\begin{equation}
\label{e:Fequiv}
(\forall (\eta_i,\beta_i)\in \Sigma_i)\quad (\eta_i,\beta_i)\in \Gamma^0_i(\xi_{-i},a_{-i})\cup \Gamma^i_i(\xi_{-i},a_{-i})\:\:\Leftrightarrow\:\:U_i(\eta_i,\beta_i)\geq R_0,
\end{equation}
which yields
\begin{equation}
\label{e:Vequiv}
V(X_i,u_i,c_i,R_0)\le 0 ~ \Leftrightarrow ~ \forall (\eta_i,\beta_i)\in\Gamma^0_i(\xi_{-i},a_{-i})\cup \Gamma^i_i(\xi_{-i},a_{-i}):~ \mathbb{E}^{\beta_i}[X_i-\eta_i]\le 0,
\end{equation}
and
\begin{equation}
\label{e:Sequiv}
(\xi_i,a_i)\in S(X_i,u_i,c_i,R_0) ~ \Leftrightarrow ~ \forall (\eta_i,\beta_i)\in\Gamma^0_i(\xi_{-i},a_{-i})\cup \Gamma^i_i(\xi_{-i},a_{-i}): \mathbb{E}^{\beta_i}[X_i-\eta_i]\le \mathbb{E}^{a_i}[X_i-\xi_i].
\end{equation}
In addition,  
\eqref{e:defJi} and \Cref{lema:first-properties}\ref{lema:first-propertiesii} imply, for every $i\in\{1,2\}$,
$$\forall(\eta_i,\beta_i)\in \Gamma^{-i}_i(\xi_{-i},a_{-i}): ~ J_i(X_i,(\eta_i,\beta_i),(\xi_{-i},a_{-i}),\gamma_i)=0\le J_i(X_i,(\xi_i,a_i),(\xi_{-i},a_{-i}),\gamma_i),$$
which yields
\begin{equation}
\label{e:BRequiv}
(\xi_i,a_i)\in BR_i(\xi_{-i},a_{-i})\quad\Leftrightarrow\quad 
\begin{cases}
\forall (\eta_i,\beta_i)\in\Gamma^0_i(\xi_{-i},a_{-i}):~ \gamma_i\mathbb{E}^{\beta_i}[X_i-\eta_i]\le\gamma_i\mathbb{E}^{a_i}[X_i-\xi_i]\:\:\text{and}\\
\forall (\eta_i,\beta_i)\in\Gamma^i_i(\xi_{-i},a_{-i}):~ \mathbb{E}^{\beta_i}[X_i-\eta_i]\le\gamma_i\mathbb{E}^{a_i}[X_i-\xi_i].
\end{cases}
\end{equation}

\ref{prop:equiv-BRi}:
Assume $\mathbb{E}^{a_i}[X_i-\xi_i]>0$. From the consistency property we have $\gamma_i=1$, and the equivalence is clear in this case since $(\xi_i,a_i)\in S(X_i,u_i,c_i,R_0)$ is equivalent to the right side of \eqref{e:BRequiv} because of \eqref{e:Sequiv}.

\ref{prop:equiv-BRii}:
Assume $\mathbb{E}^{a_i}[X_i-\xi_i]\le 0$. From the consistency property, note that $\gamma_i\mathbb{E}^{a_i}[X_i-\xi_i]=0$. Indeed, if $\mathbb{E}^{a_i}[X_i-\xi_i]<0$ then $\gamma_i=0$.  \ref{prop:equiv-BRiia}: Since $V(X_i,u_i,c_i,R_0)\le 0$, \eqref{e:BRequiv} follows from \eqref{e:Vequiv}.
\ref{prop:equiv-BRiib}: It is enough to prove that, for every $i\in\{1,2\}$, 
    \begin{equation}
    \label{e:impl}
    \begin{cases}
\gamma_i\mathbb{E}^{a_i}[X_i-\xi_i]=0\:\:\text{and}\\(\xi_i,a_i)\in BR_i(\xi_{-i},a_{-i})  
    \end{cases}
\quad\Rightarrow\quad (\forall (\eta_i,\beta_i)\in \Gamma^0_i(\xi_{-i},a_{-i})\cup \Gamma^i_i(\xi_{-i},a_{-i}))\quad 
 \mathbb{E}^{\beta_i}[X_i-\eta_i]\le 0.  
    \end{equation}
Indeed, let $(\eta_i,\beta_i)\in \Gamma^0_i(\xi_{-i},a_{-i})\cup \Gamma^i_i(\xi_{-i},a_{-i})$. If 
$(\eta_i,\beta_i)\in \Gamma^i_i(\xi_{-i},a_{-i})$, we have
$$\mathbb{E}^{\beta_i}[X_i-\eta_i]=J_i(X_i,(\eta_i,\beta_i),(\xi_{-i},a_{-i}),\gamma_i)\le 
J_i(X_i,(\xi_i,a_i),(\xi_{-i},a_{-i}),\gamma_i)=\gamma_i\mathbb{E}^{a_i}[X_i-\xi_i]=0.$$
Otherwise, suppose that $(\eta_i,\beta_i)\in \Gamma^0_i(\xi_{-i},a_{-i})$. If
$\gamma_i>0$, then $$\gamma_i\mathbb{E}^{\beta_i}[X_i-\eta_i]=J_i(X_i,(\eta_i,\beta_i),(\xi_{-i},a_{-i}),\gamma_i)\le 
J_i(X_i,(\xi_i,a_i),(\xi_{-i},a_{-i}),\gamma_i)=\gamma_i\mathbb{E}^{a_i}[X_i-\xi_i]=0,$$
which yields $\mathbb{E}^{\beta_i}[X_i-\eta_i]\le 0$.
Otherwise, if $\gamma_i=0$, suppose that $\mathbb{E}^{\beta_i}[X_i-\eta_i]>0$ and let $0<\varepsilon<\mathbb{E}^{\beta_i}[X_i-\eta_i]$. Since $\alpha_i(\eta_i)>0$,  Lemma~\ref{lemma:contratos-perturbados}\ref{lemma:contratos-perturbadosi}\&\ref{lemma:contratos-perturbadosiii} imply that there exists $\delta>0$ such that 
$\hat{\eta}_i=u_i^{-1}(u_i(\eta_i)+\delta)$ and $0<\mathbb{E}^{\beta_i}[\hat{\eta}_i-\eta_i]<\varepsilon$. Then $(\hat{\eta}_i,\beta_i,\xi_{-i},a_{-i})\in \Gamma^{i}_i(\xi_{-i},a_{-i})$
and
$$
\mathbb{E}^{\beta_i}[X_i-\hat{\eta}_i]=
\mathbb{E}^{\beta_i}[X_i-\eta_i]+\mathbb{E}^{\beta_i}[\eta_i-\hat{\eta}_i]\ge
\mathbb{E}^{\beta_i}[X_i-\eta_i]-\varepsilon>0.
$$
Hence, $$
J_i(X_i,(\hat{\eta}_i,\beta_i),(\xi_{-i},a_{-i}),\gamma_i)=\mathbb{E}^{\beta_i}[X_i-\hat{\eta}_i]>0=J_i(X_i,(\xi_i,a_i),(\xi_{-i},a_{-i}),\gamma_i),
$$
which contradicts $(\xi_i,a_i)\in BR_i(\xi_{-i},a_{-i})$. Then, $\mathbb{E}^{\beta_i}[X_i-\eta_i]\le 0$ and the result
follows from \eqref{e:Vequiv}.
\qedhere
\end{proof}

We are now able to show a full characterization of all possible Nash equilibria for this problem involving contracts with strictly positive upper-gap.


\begin{teo}\label{thrm:PPA-valuesign}
    Let $(\xi_1,a_1,\xi_2,a_2)\in C^0_1$ and define $R_0\in\R$ as the common expected utility of the Agent, i.e., 
    $$
    R_0:=U_1(\xi_1,a_1)=U_2(\xi_2,a_2).
    $$
    Suppose that $R_0$ satisfies \eqref{ass:HH} for every $i\in\{1,2\}$. 
   Then the following statements are equivalent:
   \begin{enumerate}[label=(\roman*)]
       \item\label{thrm:PPA-valuesigni} $(\xi_1,a_1),(\xi_2,a_2)$ is Nash equilibrium of the PPA game.
       \item\label{thrm:PPA-valuesignii}  $(\gamma_1,\gamma_2)$ is consistent with $(\xi_1,a_1,\xi_2,a_2)$,
       there exists $i\in\{1,2\}$ such that 
   $\mathbb{E}^{a_i}[X_i-\xi_i]\ge0$, $(\xi_i,a_i)\in S(X_i,u_i,c_i,R_0)$ and one of the following holds:
   \begin{enumerate}
       \item $\mathbb{E}^{a_{-i}}[X_{-i}-\xi_{-i}]=0$ and $(\xi_{-i},a_{-i})\in S(X_{-i},u_{-i},c_{-i},R_0)$,
       \item $\mathbb{E}^{a_{-i}}[X_{-i}-\xi_{-i}]<0$ and $V(X_{-i},u_{-i},c_{-i},R_0)\leq0$.
   \end{enumerate}
   \end{enumerate}        
\end{teo}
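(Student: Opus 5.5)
The plan is to assemble the theorem from \Cref{lema:gamma-01} and \Cref{prop:equiv-BR}, using \eqref{ass:HH} to feed both the upper-gap hypotheses of \Cref{lema:gamma-01} and part \ref{prop:equiv-BRiib} of \Cref{prop:equiv-BR}. First note that, since $(\xi_i,a_i)\in\Sigma_i$ and $U_i(\xi_i,a_i)=R_0$, \eqref{ass:HH} gives $\alpha_i(\xi_i)>0$ for both $i$. Hence $(\xi_1,\xi_2)$ satisfies \eqref{ass:H}.

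For \ref{thrm:PPA-valuesigni}$\Rightarrow$\ref{thrm:PPA-valuesignii}, \Cref{lema:gamma-01}\ref{lema:gamma-01i}--\ref{lema:gamma-01ii} (with $\alpha_i(\xi_i)>0$) yield consistency of $(\gamma_1,\gamma_2)$. \Cref{lema:gamma-01}\ref{lema:gamma-01iii} provides an index $i$ with $\mathbb{E}^{a_i}[X_i-\xi_i]\ge0$ and $\mathbb{E}^{a_{-i}}[X_{-i}-\xi_{-i}]\le0$. We must show $(\xi_i,a_i)\in S(X_i,u_i,c_i,R_0)$.
\begin{itemize}
\item If $\mathbb{E}^{a_i}[X_i-\xi_i]>0$, this is \Cref{prop:equiv-BR}\ref{prop:equiv-BRi}.
\item If it equals $0$, then \Cref{prop:equiv-BR}\ref{prop:equiv-BRiib} gives $V(X_i,u_i,c_i,R_0)\le0$. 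Since $(\xi_i,a_i)$ is feasible for the individual problem (it lies in $\Sigma_i$ with $U_i=R_0$) and attains value $0\ge V$, it is optimal.
\end{itemize}
For $-i$, apply \Cref{prop:equiv-BR}\ref{prop:equiv-BRiib} to get $V(X_{-i},u_{-i},c_{-i},R_0)\le0$. If $\mathbb{E}^{a_{-i}}[X_{-i}-\xi_{-i}]<0$ we are in case (b). If it equals $0$, the same feasibility argument gives optimality, which is case (a).

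For \ref{thrm:PPA-valuesignii}$\Rightarrow$\ref{thrm:PPA-valuesigni}, treat each principal separately.
\begin{itemize}
\item For index $i$ with $\mathbb{E}^{a_i}[X_i-\xi_i]>0$, \Cref{prop:equiv-BR}\ref{prop:equiv-BRi} converts $(\xi_i,a_i)\in S$ into $(\xi_i,a_i)\in BR_i$.
\item If $\mathbb{E}^{a_i}[X_i-\xi_i]=0$, then $(\xi_i,a_i)\in S$ gives $V(X_i,u_i,c_i,R_0)=0\le0$, and \Cref{prop:equiv-BR}\ref{prop:equiv-BRiia} gives the best response.
\item For $-i$ in case (a), again $V=0$ and \ref{prop:equiv-BRiia} applies.
\item For $-i$ in case (b), $V\le0$ is assumed directly, so \ref{prop:equiv-BRiia} applies.
\end{itemize}
Both best-response inclusions hold, so the pair is a Nash equilibrium.

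There is no real obstacle beyond the borderline case where a principal's profit is exactly zero. There one must check that $S(X_i,u_i,c_i,R_0)$ membership and $V\le0$ are interchangeable via feasibility of $(\xi_i,a_i)$ in the individual problem. That feasibility holds because $U_i(\xi_i,a_i)=R_0$ meets the participation constraint and $(\xi_i,a_i)\in\Sigma_i$ gives incentive compatibility. The other point to watch is that \ref{prop:equiv-BRiib} requires \eqref{ass:HH}, which is exactly the theorem's standing assumption.
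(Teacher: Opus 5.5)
Your proposal is correct and follows the same route as the paper. Necessity goes through \Cref{lema:gamma-01}, with \eqref{ass:HH} supplying \eqref{ass:H}, followed by \Cref{prop:equiv-BR}; sufficiency goes through \Cref{prop:equiv-BR}\ref{prop:equiv-BRi} and \Cref{prop:equiv-BR}\ref{prop:equiv-BRiia}. Your handling of the zero-profit cases is more careful than the paper's. You combine \Cref{prop:equiv-BR}\ref{prop:equiv-BRiia}/\ref{prop:equiv-BRiib} with the feasibility of $(\xi_i,a_i)$ in the individual problem, whereas the paper cites \Cref{prop:equiv-BR}\ref{prop:equiv-BRi} there even though that item is stated only for strictly positive profit.
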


\begin{proof}
Note that in both implications $(\gamma_1,\gamma_2)$ is consistent with $(\xi_1,a_1,\xi_2,a_2)$. Let $i\in\{1,2\}$ be such that $\mathbb{E}^{a_i}[X_i-\xi_i]\ge0$
and $\mathbb{E}^{a_{-i}}[X_{-i}-\xi_{-i}]\le 0$.
Note that \Cref{prop:equiv-BR}\ref{prop:equiv-BRi}
asserts that
$$(\xi_i,a_i)\in S(X_i,u_i,c_i,R_0)\quad\Leftrightarrow\quad(\xi_i,a_i)\in BR_i(\xi_{-i},a_{-i}).$$
Similarly, if 
$\mathbb{E}^{a_{-i}}[X_{-i}-\xi_{-i}]=0$, we deduce from \Cref{prop:equiv-BR}\ref{prop:equiv-BRi}  that 
$$(\xi_{-i},a_{-i})\in S(X_{-i},u_{-i},c_{-i},R_0)\quad\Leftrightarrow\quad(\xi_{-i},a_{-i})\in BR_{-i}(\xi_{i},a_{i}).$$ 
On the other hand, if 
$\mathbb{E}^{a_{-i}}[X_{-i}-\xi_{-i}]<0$, from \Cref{prop:equiv-BR}\ref{prop:equiv-BRiia} we obtain 
$$V(X_{-i},u_{-i},c_{-i},R_0)\leq0\quad\Rightarrow\quad(\xi_{-i},a_{-i})\in BR_{-i}(\xi_{i},a_{i}).$$
This proves sufficiency. Since $R_0$ satisfies \eqref{ass:HH} for every $i\in\{1,2\}$, from \Cref{lema:gamma-01}\ref{lema:gamma-01iii} we deduce the existence of $i\in\{1,2\}$ such that $\mathbb{E}^{a_i}[X_i-\xi_i]\ge0$ and  $\mathbb{E}^{a_{-i}}[X_{-i}-\xi_{-i}]\le0$. Note that \Cref{prop:equiv-BR}\ref{prop:equiv-BRiib} implies
$$(\xi_{-i},a_{-i})\in BR_{-i}(\xi_{i},a_{i})\quad\Rightarrow\quad V(X_{-i},u_{-i},c_{-i},R_0)\leq0.$$
Hence, we conclude from \Cref{prop:equiv-BR}\ that $(\xi_i,a_i)\in S(X_i,u_i,c_i,R_0)$. This proves necessity.
\qedhere
\end{proof}

\begin{rem} \label{rem:sufficiency-Nash-eq} Sufficiency in the preceding theorem does not require \eqref{ass:HH} for any $i\in\{1,2\}$. Hence, even if  \eqref{ass:HH} is not satisfied, Nash equilibria can be found by verifying the conditions in Theorem~\ref{thrm:PPA-valuesign}\ref{thrm:PPA-valuesignii}, if such type of equilibrium exists. These conditions motivate an algorithm for solving the PPA game, which will be explored in the following sections.
\end{rem}

\subsection{On the existence of equilibria}

\Cref{thrm:PPA-valuesign} provides a characterization of the Nash equilibria of the PPA game. Based on it, we can establish an existence result which relies on being able to solve the individual contracting problems of both Principals. To this end, we define the (possible infinite) highest expected utility of the Agent when working for each Principal
\[
\forall i\in\{1,2\}, \quad \bar{R_i}:= \sup_{(\xi,a)\in\Sigma_i} U_i(\xi,a),
\]
and we consider the following assumption, which is illustrated via some examples in \Cref{sec:existence}.
\begin{assumption} \label{ass:existence-nash-competition} ~
For every $R\leq \min\{\bar{R}_1,\bar{R}_2\}$ and $i\in\{1,2\}$, we have $S(X_i,u_i,c_i,R)\neq \emptyset$.
 \end{assumption}

We present our main existence result, which provides a sufficient condition for the existence of Nash equilibria, based on the value functions of the individual contracting problems of the Principals.

 \begin{prop}\label{prop:sufficient-condition-Nash}
 Suppose that \Cref{ass:existence-nash-competition} holds and that there exists $R_0\le\min\{\bar{R}_1,\bar{R}_2\}$ such that one of the following assertions holds:  
\begin{enumerate}[label=(\roman*)]
\item There exist $i\in\{1,2\}$ such that $\gamma_i =1$ and
         \[
 V(X_i,u_i,c_i,R_0) > 0 \geq V(X_{-i},u_{-i},c_{-i},R_0).
        \] 
\item There exist $i\in\{1,2\}$ such that $\gamma_i =1$ and
         \[
 V(X_i,u_i,c_i,R_0) \geq 0 > V(X_{-i},u_{-i},c_{-i},R_0).
        \] 
\item $V(X_1,u_1,c_1,R_0)=V(X_{2},u_{2},c_{2},R_0)=0$.

\end{enumerate}
Then there exists a Nash equilibrium $(\xi_1,a_1,\xi_2,a_2)\in\Sigma_1\times\Sigma_2$ of the PPA game, satisfying
\begin{equation}
\label{e:sat}
V(X_i,u_i,c_i,R_0) \geq 0 \quad\Longrightarrow \quad (\xi_i,a_i)\in S(X_i,u_i,c_i,R_0).
\end{equation}
 \end{prop}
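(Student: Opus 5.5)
The plan is to construct the equilibrium explicitly from solutions of the individual contracting problems at the level $R_0$, and then check the sufficiency direction of \Cref{thrm:PPA-valuesign}. By \Cref{rem:sufficiency-Nash-eq}, that direction does not need \eqref{ass:HH}. Since $R_0\le\min\{\bar R_1,\bar R_2\}$, \Cref{ass:existence-nash-competition} gives, for each $i\in\{1,2\}$, a pair $(\xi_i,a_i)\in S(X_i,u_i,c_i,R_0)$. \Cref{lema:matchingR0} states that the participation constraint binds at the optimum. Hence $U_1(\xi_1,a_1)=R_0=U_2(\xi_2,a_2)$ and $(\xi_1,a_1,\xi_2,a_2)\in C^0_1$. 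Moreover $\mathbb{E}^{a_i}[X_i-\xi_i]=V(X_i,u_i,c_i,R_0)$, so the sign of each Principal's direct profit equals the sign of her value function at $R_0$. This choice of contracts automatically gives \eqref{e:sat}.

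The next step is consistency of $(\gamma_1,\gamma_2)$ in the sense of \Cref{def:consistent-gammas}, checked case by case.

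In case (i), $\mathbb{E}^{a_i}[X_i-\xi_i]>0$ and $\gamma_i=1$, as required. Also $\gamma_{-i}=0$, so whatever the sign of the other profit, the implication for $-i$ holds: it requires $\gamma_{-i}=0$ when that profit is negative, and it does not arise otherwise since the profit is $\le0$.

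In case (ii), $\gamma_i=1$. If the profit of $i$ is $0$, no condition is imposed on $i$. The profit of $-i$ is $<0$ and $\gamma_{-i}=0$, so consistency holds.

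In case (iii), both profits are $0$ and consistency is vacuous for any $\gamma$.

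Finally, I verify condition \ref{thrm:PPA-valuesignii} of \Cref{thrm:PPA-valuesign} with the index $i$ from the case at hand; in case (iii) $i$ is arbitrary. We have $\mathbb{E}^{a_i}[X_i-\xi_i]=V(X_i,\dots,R_0)\ge0$ and $(\xi_i,a_i)\in S(X_i,u_i,c_i,R_0)$. For $-i$, if $V(X_{-i},\dots,R_0)=0$, then the profit is $0$ and $(\xi_{-i},a_{-i})\in S$, which is subcase (a). If $V(X_{-i},\dots,R_0)<0$, then the profit is negative and $V\le0$, which is subcase (b).

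The main obstacle is that the sufficiency part of the proof of \Cref{thrm:PPA-valuesign} as written starts from an index with profit $\ge0$ and $\le0$ respectively. So rather than quote the theorem verbatim, I would invoke \Cref{prop:equiv-BR} directly. Part \ref{prop:equiv-BRi} handles a Principal with strictly positive profit, which must lie in $S$. Part \ref{prop:equiv-BRiia} handles any Principal with profit $\le0$ and $V(\cdot,R_0)\le0$, and it also covers the zero-profit cases. Neither part needs \eqref{ass:HH}. Applying them gives $(\xi_j,a_j)\in BR_j(\xi_{-j},a_{-j})$ for both $j$, which is exactly the Nash property. A minor point to confirm is that \Cref{lema:matchingR0} applies under the paper's standing assumptions; if it needs an upper-gap condition, I would note that the binding constraint is what makes the pair land in $C^0_1$.
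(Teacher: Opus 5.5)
Your proposal is correct and follows essentially the same route as the paper: you pick solutions of both individual problems at $R_0$ and use \Cref{lema:matchingR0} to place the tuple in $C^0_1$. You then apply \Cref{prop:equiv-BR}\ref{prop:equiv-BRi} to a Principal with strictly positive profit and \Cref{prop:equiv-BR}\ref{prop:equiv-BRiia} to any Principal with zero or negative value. Your explicit check that $(\gamma_1,\gamma_2)$ is consistent, using $\gamma_1+\gamma_2=1$ to get $\gamma_{-i}=0$, verifies a hypothesis of \Cref{prop:equiv-BR} that the paper's proof leaves implicit, so it is a welcome addition.
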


\begin{proof}
Let $R_0\le\min\{\bar{R}_1,\bar{R}_2\}$, $(\xi_i,a_i)\in S(X_i,u_i,c_i,R_0)$ and $(\xi_{-i},a_{-i})\in S(X_{-i},u_{-i},c_{-i},R_0)$, in view of Assumption~\ref{ass:existence-nash-competition}. It follows from \Cref{lema:matchingR0} that $U_i(\xi_i,a_i)=U_{-i}(\xi_{-i},a_{-i})=R_0$. Moreover, if $E^{a_i}[X_i-\xi_i]>0$, it follows from \Cref{prop:equiv-BR}\ref{prop:equiv-BRi} that $(\xi_i,a_i)\in BR_i(\xi_{-i},a_{-i})$ and, if $E^{a_i}[X_i-\xi_i]=0$, the same result follows from \Cref{prop:equiv-BR}\ref{prop:equiv-BRiia}. Since $V(X_{-i},u_{-i},c_{-i},R_0)\leq 0$, \Cref{prop:equiv-BR}\ref{prop:equiv-BRiia} implies that $(\xi_{-i},a_{-i})\in BR_{-i}(\xi_{i},a_{i})$. Hence, the tuple $(\xi_i,a_i,\xi_{-i},a_{-i})$ is a Nash equilibrium of the PPA game and \eqref{e:sat} holds. 
\end{proof}

\begin{rem} \label{rem:non-optimality-losing-principal}
In the proof of \Cref{prop:sufficient-condition-Nash}, if conditions (i) or (ii) holds, we can choose any $(\xi_{-i},a_{-i})\in\Sigma_{-i}$ such that $(\xi_i,a_i,\xi_{-i},a_{-i})\in C_i^0$. This means the class of Nash equilibria is broader, as it includes actions that are not solutions to the individual contracting problem of Principal $-i$. To simplify the proof, we selected strategies, for both players, that solve their individual contracting problem. However, for the Principal with negative value (and zero bias), any strategy that does not surpass the offer of the other is a best-response.       
\end{rem}

The importance of \Cref{prop:sufficient-condition-Nash} is that it allows to find Nash equilibria of the PPA game by looking at the individual contracting problems of each Principal separately, and comparing their value functions. This feature is what motivates Section~\ref{subsec:algorithm}, in which we propose an algorithm for finding the equilibria of the game played by the Principals.

\subsection{Computation of Nash equilibria} \label{subsec:algorithm}

Based on \Cref{prop:sufficient-condition-Nash} and \Cref{thrm:PPA-valuesign}, we provide numerical procedures to approximate a Nash equilibrium of the PPA game when the value functions of the Principals satisfy \Cref{ass:existence-nash-competition}. 

Note first that, for every $R>\bar{R}:=\min\{\bar{R}_1,\bar{R}_2\}$, the sets $C_1^0=C_2^0=\emptyset$ and, in view of \Cref{lema:first-properties}, there is no Nash equilibrium providing the Agent such level of utility. Hence, we restrict attention to $R\leq\bar R$. We know by \Cref{lemma:decrecimiento-V} that each $V(X_j,u_j,c_j,\cdot)$ is non-increasing for all $j\in\{1,2\}$. In view of \Cref{lema:gamma-01} and \Cref{thrm:PPA-valuesign}, if both $V(X_1,u_1,c_1,R)$ and $V(X_2,u_2,c_2,R)$ are strictly negative or strictly positive for every $R\le \bar{R}$, then there is no Nash equilibrium. 

We therefore focus on the case in which both value functions $V(X_1,u_1,c_1,\cdot)$ and $V(X_2,u_2,c_2,\cdot)$ change sign on $\left]-\infty,\bar R\right]$. Our algorithm constructs the largest interval $\left[R_0^-,R_0^+\right]$ with $R_0^+\le \bar{R}$ such that, for some fixed $i\in{1,2}$,
\begin{equation}\label{eq:signos-principales}
V(X_i,u_i,c_i,R)\geq 0\geq V(X_{-i},u_{-i},c_{-i},R), \quad \forall R\in\left[R_0^-,R_0^+\right].    
\end{equation}

For every $j\in\{1,2\}$, let $R_j\in\R$ denote a sign-change threshold of $V(X_j,u_j,c_j,\cdot)$, namely a value satisfying
$$
(\forall j\in\{1,2\})\quad V(X_j,u_j,c_j,R)=\left\{\begin{array}{cl}
    \geq0, &\text{if }R\in \left]-\infty,R_j\right[,  \\[2pt]
    \leq0, &\text{if }R\in \left]R_j, \bar{R}_j\right].
\end{array}\right.
$$
Then, we can choose $R_0^-:=\min\{R_1,R_2\}$ and $R_0^+:=\max\{R_1,R_2\}$, except in cases as the one depicted in Figure \ref{fig:bad-case}, in which the following two conditions hold simultaneously:
\begin{itemize}
    \item[(1)] $\operatorname{sign}(V(X_i,u_i,c_i,\cdot))\equiv \operatorname{sign}(V(X_{-i},u_{-i},c_{-i},\cdot))$,
    \item[(2)] $0$ does not belong to the image of either $V(X_i,u_i,c_i,\cdot)$ or $V(X_{-i},u_{-i},c_{-i},\cdot)$. 
\end{itemize}

\begin{figure}[H]
    \centering
    \includegraphics[width=0.5\linewidth]{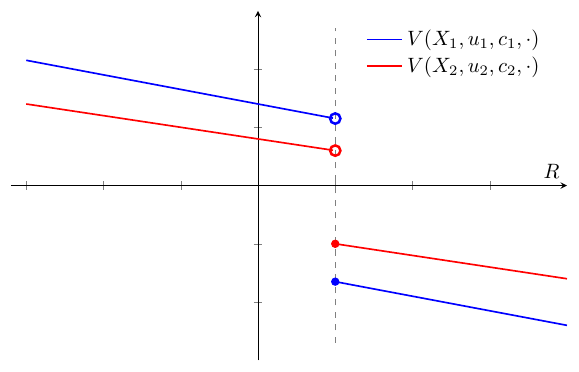}
    \caption{An example of a problematic case}
    \label{fig:bad-case}
\end{figure}

Under the existence of sign-change thresholds, condition (1) fails whenever $R_1\neq R_{2}$. Moreover, condition (2) fails if either value function is continuous. In \Cref{lemma:continuidad-V}, we provide conditions ensuring continuity of the Principals' value functions.

Given its unlikelihood, particularly when the Principals are heterogeneous, we assume that such a problematic case does not occur. Accordingly, we provide two numerical methods to approximate the sign-change thresholds $R_1$ and $R_2$, which determine $R_0^-=\min\{R_1,R_2\}$ and $R_0^+=\max\{R_1,R_2\}$ such that \eqref{eq:signos-principales} holds. The first method is bisection, which exploits the monotonicity\footnote{See \Cref{lemma:decrecimiento-V} for the details.} of $V(X,u,c,\cdot)$ and can be applied even when the value function is discontinuous. The second method is Newton's method, valid under the differentiability of the function $V(X,u,c,\cdot)$, studied in Appendix~\ref{sec:differentiability}. Note however that the function is differentiable almost everywhere due to is monotonicity.


\medskip
\noindent\textbf{Bisection procedure for the sign-change threshold of $V(X,u,c,\cdot)$.} Given reservation utilities $r<R$ satisfying $V(X,u,c,r)>0>V(X,u,c,R)$, and a precision tolerance $\varepsilon_{\mathrm{tol}}>0$.

\begin{align*}\label{e:algotseng}
&\begin{array}{l}
\text{Function }B[X,u,c,R,r,\varepsilon_{\mathrm{tol}}]\\
\left\lfloor
\begin{array}{l}
k=0,~ R_0=R,~ r_0=r\\
\text{While } R_k-r_k>\varepsilon_{\mathrm{tol}}\\
\left\lfloor
\begin{array}{l}
\ell_k=\frac12(R_k+r_k), \\
V_k=V(X,u,c,\ell_k), \\
\text{If }V_k<0, 
\text{then } 
r_{k+1}=r_k,~R_{k+1}=\ell_k,\\
\text{Else } 
r_{k+1}=\ell_k,~ R_{k+1}=R_k,\\
k=k+1, 
\end{array}
\right.\\
\text{Return }\ell_k.
\end{array}
\right.
\end{array}
\end{align*}

\noindent\textbf{Newton's method for the zeros of $V(X,u,c,\cdot)$.}
Given a value $\ell_k\in\R$, if $(\xi_k,a_k)\in S(X,u,c,\ell_k)$, then under the assumptions of \Cref{prop:differentibility-iff},
$V'(X,u,c,\ell_k)=-\E^{a_k}[1/u'(\xi_k)]$. This suggests the Newton iteration
\begin{equation}\label{e:newton-step}
\ell_{k+1}=\ell_k-\frac{V(X,u,c,\ell_k)}{V'(X,u,c,\ell_k)}
=\ell_k+\frac{V(X,u,c,\ell_k)}{\E^{a_k}\!\left[1/u'(\xi_k)\right]},
\end{equation}
safeguarded by the sign-changing bracket $[r_k,R_k]$ (with $V(X,u,c,r_k)>0>V(X,u,c,R_k)$). Whenever the Newton
iterate falls outside $\left]r_k,R_k\right[$, the midpoint is used instead, and the bracket is updated with the sign of
$V(X,u,c,\ell_{k+1})$ exactly as in Function $B$.
\begin{align*}\label{e:algonewton}
&\begin{array}{l}
\text{Function }N[X,u,c,R,r,\varepsilon_{tol},k_{max}]\\
\left\lfloor
\begin{array}{l}
r_0=r,\;R_0=R,\;\ell_0\in\{r,R\}\text{ with smaller }|V(X,u,c,\cdot)|,\;(\xi_0,a_0)\in S(X,u,c,\ell_0)\\
\text{For } k=0,1,\ldots,k_{max}\\
\left\lfloor
\begin{array}{l}
\ell_{k+1}=\ell_k+V(X,u,c,\ell_k)\big/\E^{a_k}[1/u'(\xi_k)]\\
\text{If } \ell_{k+1}\notin\left]r_k,R_k\right[\text{, then } \ell_{k+1}=\tfrac12(r_k+R_k)\\
\text{Solve } (\xi_{k+1},a_{k+1})\in S(X,u,c,\ell_{k+1}) \text{ and set } V_{k+1}=V(X,u,c,\ell_{k+1})\\
\text{If } V_{k+1}>0 \text{ then } r_{k+1}=\ell_{k+1},\,R_{k+1}=R_k \text{ else } r_{k+1}=r_k,\,R_{k+1}=\ell_{k+1}\\
\text{If } |V_{k+1}|\le\varepsilon_{tol} \text{ or } R_{k+1}-r_{k+1}\le\varepsilon_{tol}\text{, then STOP}
\end{array}\right.\\
\text{Return }\ell_{k+1}
\end{array}\right.
\end{array}
\end{align*}
Because of the safeguard, the bracket length is at most halved every two iterations in the worst case, so Function $N$
never performs worse than bisection by more than a constant factor, while, near a zero at which $V(X,u,c,\cdot)$ is twice
continuously differentiable with non-vanishing derivative, the iteration \eqref{e:newton-step} converges quadratically.

Now, given a precision tolerance $\varepsilon_{tol}$ and, for every $i\in\{1,2\}$, two values $r_i<R_i$ such that $V(X_i,u_i,c_i,r_i)>0>V(X_i,u_i,c_i,R_i)$, to compute the Nash equilibria, we  propose, 

\begin{enumerate}
\item For every $i\in\{1,2\}$, set $k^i_{tol}=\ln_2((R_i-r_i)/\varepsilon_{tol})$.
\item Compute $\ell_i$ via $B[X_i,u_i,c_i,R_i,r_i,k^i_{tol}]$ or $N[X_i,u_i,c_i,R_i,r_i,\varepsilon_{tol},k_{max}]$.
\item If $\ell_1<\ell_2$, we set $R_0^-=\ell_1$, $R_0^+=\ell_2$. If $\gamma_2=1$, then any $(\xi_1,a_1,\xi_2,a_2)\in C_1^0$ such that $(\xi_1,a_1)\in \Sigma_1$ and $(\xi_2,a_2)\in S(X_2,u_2,c_2,\ell)$ is a Nash equilibrium, for every $\ell\in[R_0^-,R_0^+]$. If $\gamma_2\neq 1$, the PPA game has no Nash equilibria.
\item If $\ell_1>\ell_2$, we set $R_0^-=\ell_2$, $R_0^+=\ell_1$. If $\gamma_1=1$, then any $(\xi_1,a_1,\xi_2,a_2)\in C_1^0$ such that $(\xi_1,a_1)\in S(X_1,u_1,c_1,\ell)$ and $(\xi_2,a_2)\in\Sigma_2$ is a Nash equilibrium, for every $\ell\in[R_0^-,R_0^+]$. If $\gamma_1\neq 1$, the PPA game has no Nash equilibria.
\item If $\ell_1=\ell_2=\ell$, any $(\xi_1,a_1,\xi_2,a_2)\in C_1^0$ such that $(\xi_1,a_1)\in S(X_1,u_1,c_1,\ell)$ and $(\xi_2,a_2)\in S(X_2,u_2,c_2,\ell)$ is a Nash equilibrium.
\end{enumerate}

The attentive reader will notice that our algorithm requires knowledge of the value functions of the Principals, which means solving standard Principal-Agent problems multiple times. In general, as discussed in \Cref{sec:intro}, solving the problem for a single value of the reservation utility is not an easy task. Nonetheless, we point out that the broad class of rational problems, in which the agent's expected utility is a rational function of the effort, can be tackled using the polynomial approach developed by \cite{renner15principal}. Indeed, in \Cref{sec:numerics}, we use this methodology to numerically solve several PPA games. 

In the particular case of an affine utility function, the Principal's complete value function can be obtained by solving the problem only once, for any reservation utility. We discuss this case in the next subsection.

\begin{rem}
Let us comment on the particular case when only one value function changes sign on $]-\infty,\bar R]$, say $V(X_{-i},u_{-i},c_{-i},\cdot)$. Note that the value $\ell_i$ in the routine above is not well defined, however there are two straightforward cases. If $V(X_{i},u_{i},c_{i},\cdot)$ remains positive, then $R_0^-=R_{-i}$, $R_0^+=\bar{R}$ and Nash equilibria exist only if $\gamma_i=1$. If the function remains negative, then $R_0^-=-\infty$, $R_0^+=R_{-i}$ and Nash equilibria exist only if $\gamma_{-i}=1$.
\end{rem}

\subsection{The case of affine utility functions}

In this subsection, we assume that the Agent's utility functions are affine, \emph{i.e.}, for every $i\in\{1,2\}$, $u_i(x)= \alpha_ix+\beta_i$ for some $\alpha_i>0$ and $\beta_i\in\R$. We also assume that the sets $\Xi_1$ and $\Xi_2$ are stable under translation by constants. In this setting we have $\bar{R_1}=\bar{R_2}=+\infty$ and sufficient conditions for \Cref{ass:existence-nash-competition} to hold are provided in \Cref{sec:affine}. Moreover, \Cref{lemma:linear-case-values} connects the values of the single Principal-Agent problem with different reservation utilities. More precisely, we have $V(X_i,u_i,c_i,R+\delta)=V(X_i,u_i,c_i,R)-\frac{\delta}{\alpha_i}$ for any $\delta\in\R$ and their solutions are linked via
$$
(\xi_i,a_i)\in S(X_i,u_i,c_i,R)\iff (u_i^{-1}(u_i(\xi_i)+\delta),a_i)=\left(\xi_i+\frac{\delta}{\alpha_i},a_i\right)\in S(X_i,u_i,c_i,R+\delta).
$$
Then, we can find a possible Nash equilibria for the PPA game by following the next steps
\begin{enumerate}
    \item For $i\in\{1,2\}$, find $V(X_i,u_i,c_i,0)$, then $V(X_i,u_i,c_i,R)=V(X_i,u_i,c_i,0)-\frac{R}{\alpha_i}$.
    \item Define $\tilde{R}_i:=\alpha_iV(X_i,u_i,c_i,0)$ for $i\in\{1,2\}$, then we will have that
\[
\big(V(X_i,u_i,c_i,R) = 0 \iff R=\tilde{R}_i \big) ~\wedge~ \text{sign}(V(X_i,u_i,c_i,R)) = \text{sign}(\tilde{R}_i-R).
\] 
\item If $\tilde{R}_1>\tilde{R}_2$ and $\gamma_1\neq 1$ or $\tilde{R}_2>\tilde{R}_1$ and $\gamma_2\neq 1$, there is no Nash equilibrium. Otherwise, let $R\in[\min\{\tilde{R}_1,\tilde{R}_2\},\max\{\tilde{R}_1,\tilde{R}_2\}]$.

\item Select $(\xi_1,a_1)\in S(X_1,u_1,c_1,R)$ and $(\xi_2,a_2)\in S(X_2,u_2,c_2,R)$.
\end{enumerate}
After the previous procedure, we have that $(\xi_1,a_1)$, $(\xi_2,a_2)$ is a Nash equilibrium of the PPA game. To find all possible Nash equilibria, we can relax Step (4) above. Namely, we only require:
\begin{enumerate}
\item[(4')] Select $(\xi_i,a_i)\in S(X_i,u_i,c_i,R)$ when $\tilde{R}_i\ge \tilde{R}_{-i}$ and $\gamma_i>0$. If $\tilde{R}_i< \tilde{R}_{-i}$ or $\gamma_i= 0$, select any $(\xi_i,a_i)\in\Sigma_i$ 
such that $(\xi_1,a_1,\xi_2,a_2)\in C_1^0$.
\end{enumerate}

\section{Numerical results} \label{sec:numerics}

Based on the discussion in \Cref{subsec:algorithm}, we present several illustrative examples in which we identify Nash equilibria of the PPA game using the routine presented in \Cref{subsec:algorithm}. We assume that the Principals' outcomes are discrete random variables. Hence, by \Cref{rem:when-assump-H-holds}, Condition \eqref{ass:H} is trivially satisfied.  In every example, we are certain that we find every Nash equilibria, since the necessary conditions established in \Cref{thrm:PPA-valuesign} cannot be satisfied at any other level of expected utility for the Agent.

\medskip
Inspired by \cite{renner15principal}, we assume that every action set is given by $A_i:=[0,1]$ and we let each outcome $X_i$ be a 3-valued discrete random variable, which follows a binomial distribution with probability $a$. That is, for any $i\in\{1,2\}$, for given $x_0^i<x_1^i<x_2^i$ we have 
\[
\mathbb{P}(X_i=x_j^i)=\binom{2}{j}a^j(1-a)^{2-j}, \quad \forall j \in\{0,1,2\}.
\]
In this setting, the set of contracts can be chosen simply as $\Xi_1=\Xi_2=\R^3$. We use the approach by \cite{renner15principal} to solve the individual Principal-Agent problems for any level of reservation utility. We are thus checking numerically, and \emph{a posteriori}, that \Cref{ass:existence-nash-competition} holds.

\medskip
From the discussion in \Cref{sec:differentiability}, the value function of each Principal is differentiable in all the examples. We thus use the two algorithms proposed in \Cref{subsec:algorithm} to determine the same set of Nash equilibria, and compare the performance of the algorithms at the end of the section.

\subsection{Exponential utilities} 
For each $i\in\{1,2\}$ we consider the utility functions $u_i:\R\rightarrow\R$ and cost functions $c_i:[0,1]\rightarrow\R$ of the Agent with the following form
\[
u_i(x)=1-e^{-\eta_i x}, \quad c_i(a)=\kappa_i a^2,
\]
with each $\eta_i>0$ and $\kappa_i>0$.

\medskip
From the Agent’s perspective, we consider two different job opportunities. One entails higher costs but offers a greater short-term reward, while the other involves lower costs and a higher long-term reward. 
We thus consider $\eta_1=1/3$, $\kappa_1=2$, $x_0^1=0$, $x_1^1=2$, $x_2^1=4$ and $\eta_2=1/2$, $\kappa_2=4$, $x_0^2=0$, $x_1^2=1$, $x_2^2=5$. 
We follow the steps discussed in \Cref{subsec:algorithm} to find Nash equilibria for this problem and present our results in \Cref{fig:ppa_exp_bisec}.

\begin{figure}[H]
    \centering
    \includegraphics[width=0.55\linewidth]{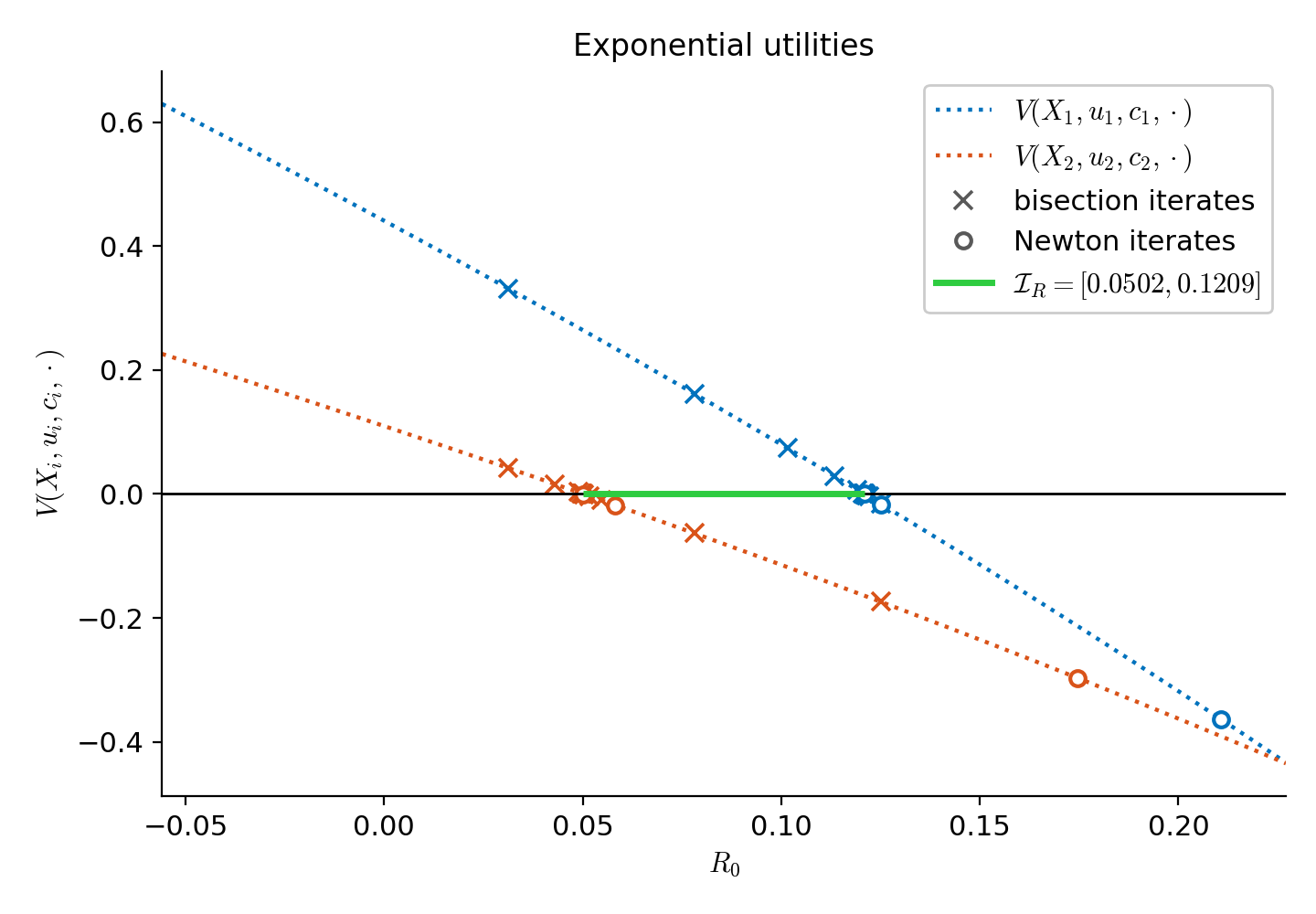}
    \caption{Computation of Nash equilibria via bisection and Newton's method.}
    \label{fig:ppa_exp_bisec}
\end{figure}

 The graph depicts the value functions of both Principals, when they can directly hire the Agent. In green, we can see the interval $\Ic_R=[0.0502,0.1209]$ where we land on cases (i) and (ii) of \Cref{prop:sufficient-condition-Nash}, where we can guarantee there exists Nash equilibria for the PPA game if $\gamma_1=1$ and $\gamma_2=0$. Indeed, from \Cref{rem:non-optimality-losing-principal}, we can construct Nash equilibria by picking $R_0\in\R$ in the interior of $\Ic_R$, a solution $(\xi_1,a_1)\in S(X_1,u_1,c_1,R_0)$ to Principal 1's contracting problem and any $(\xi_2,a_2)\in\Sigma_2$, such that $(\xi_1,a_1,\xi_2,a_2)\in C^0_1$. Any choice of such $(\xi_1,a_1),(\xi_2,a_2)$ constitutes a Nash equilibrium of the PPA game.

\subsection{Logarithmic utilities} In this example we consider the following form for the utility functions $u_i:\R_+\rightarrow\R$ and cost functions $c_i:[0,1]\rightarrow\R$ of the Agent
\[
u_i(x)=\ln({\eta_i x}+1), \quad c_i(a)=\kappa_i a^2,
\]
with each $\eta_i>0$ and $\kappa_i>0$. We point out that the utility functions have a restricted domain but the results of the paper can be easily extended to this case. We keep the same parameters and random variables $X_1$ and $X_2$ from the previous example, that is, $\eta_1=1/3$, $\kappa_1=2$, $x_0^1=0$, $x_1^1=2$, $x_2^1=4$ and $\eta_2=1/2$, $\kappa_2=4$, $x_0^2=0$, $x_1^2=1$, $x_2^2=5$. We 
present the results in \Cref{fig:ppa_log_bisec}.

\begin{figure}[H]
    \centering
    \includegraphics[width=0.55\linewidth]{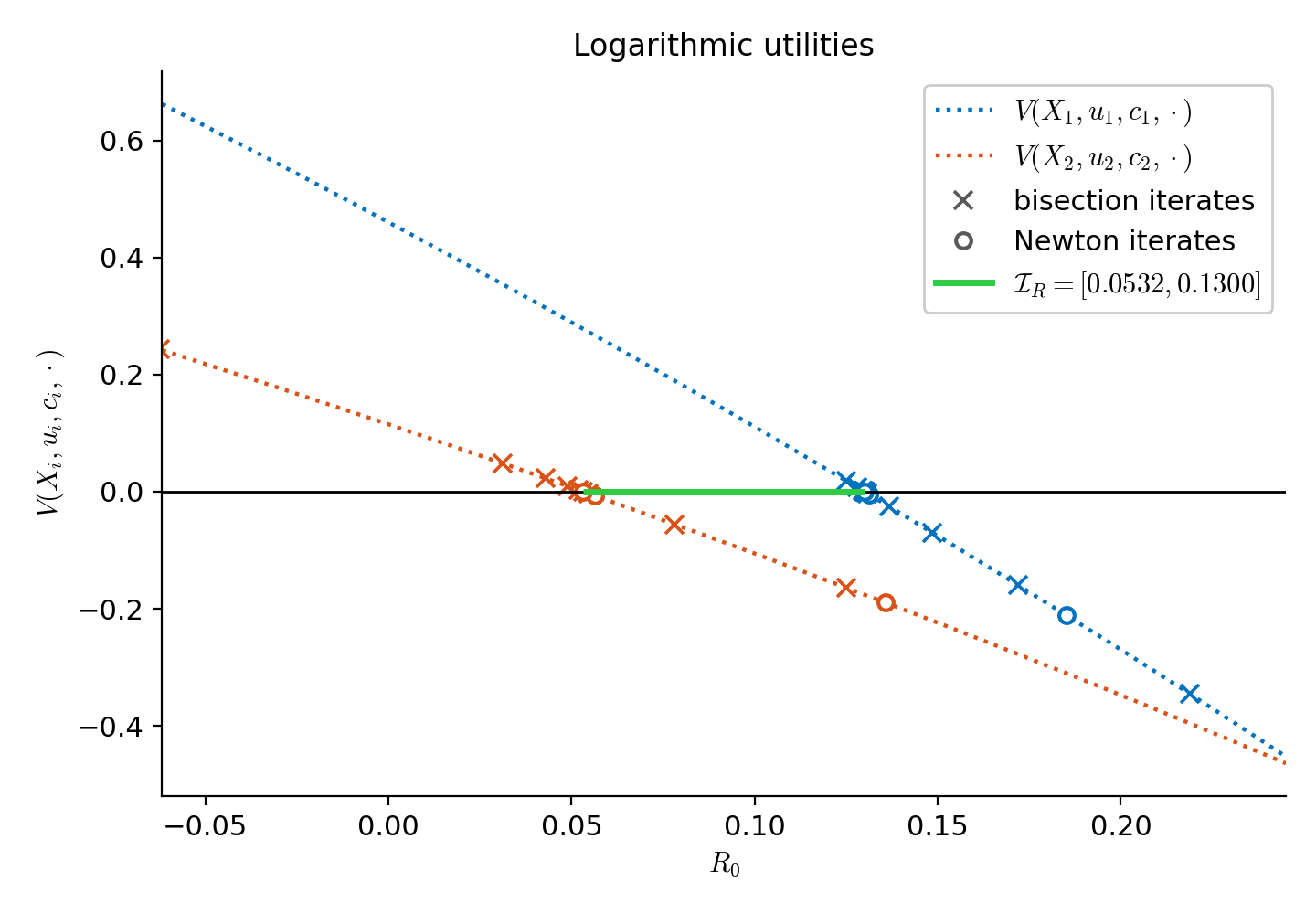}
    \caption{Computation of Nash equilibria via bisection and Newton's method.}
    \label{fig:ppa_log_bisec}
\end{figure}

The graph depicts the value functions of both Principals, when they can directly hire the Agent. Here, $\Ic_R=[0.0553,0.1300]$ and we can guarantee the existence of Nash equilibria for the PPA game if $\gamma_1=1$ and $\gamma_2=0$.


\subsection{Constant relative risk aversion (CRRA) utilities}
In this example we consider the set of contracts $\Xi_1=\Xi_2=\R_+^3$. For each $i\in\{1,2\}$ we consider the utility functions $u_i:\R_+\rightarrow\R$ and cost functions $c_i:[0,1]\rightarrow\R$ of the Agent with the following form
\[
u_i(x)=\frac{x^{1-\eta_i}}{1-\eta_i}, \quad c_i(a)=\kappa_i a^2,
\]
with each $\eta_i\in[0,1[$ and $\kappa_i>0$. We keep the same parameters and random variables $X_1$ and $X_2$ from the previous example, that is, $\eta_1=1/3$, $\kappa_1=2$, $x_0^1=0$, $x_1^1=2$, $x_2^1=4$ and $\eta_2=1/2$, $\kappa_2=4$, $x_0^2=0$, $x_1^2=1$, $x_2^2=5$. We follow the steps discussed in \Cref{subsec:algorithm} to find Nash equilibria for this problem and present our results in \Cref{fig:ppa_raiz_bisec}.

\begin{figure}[H]
    \centering
    \includegraphics[width=0.55\linewidth]{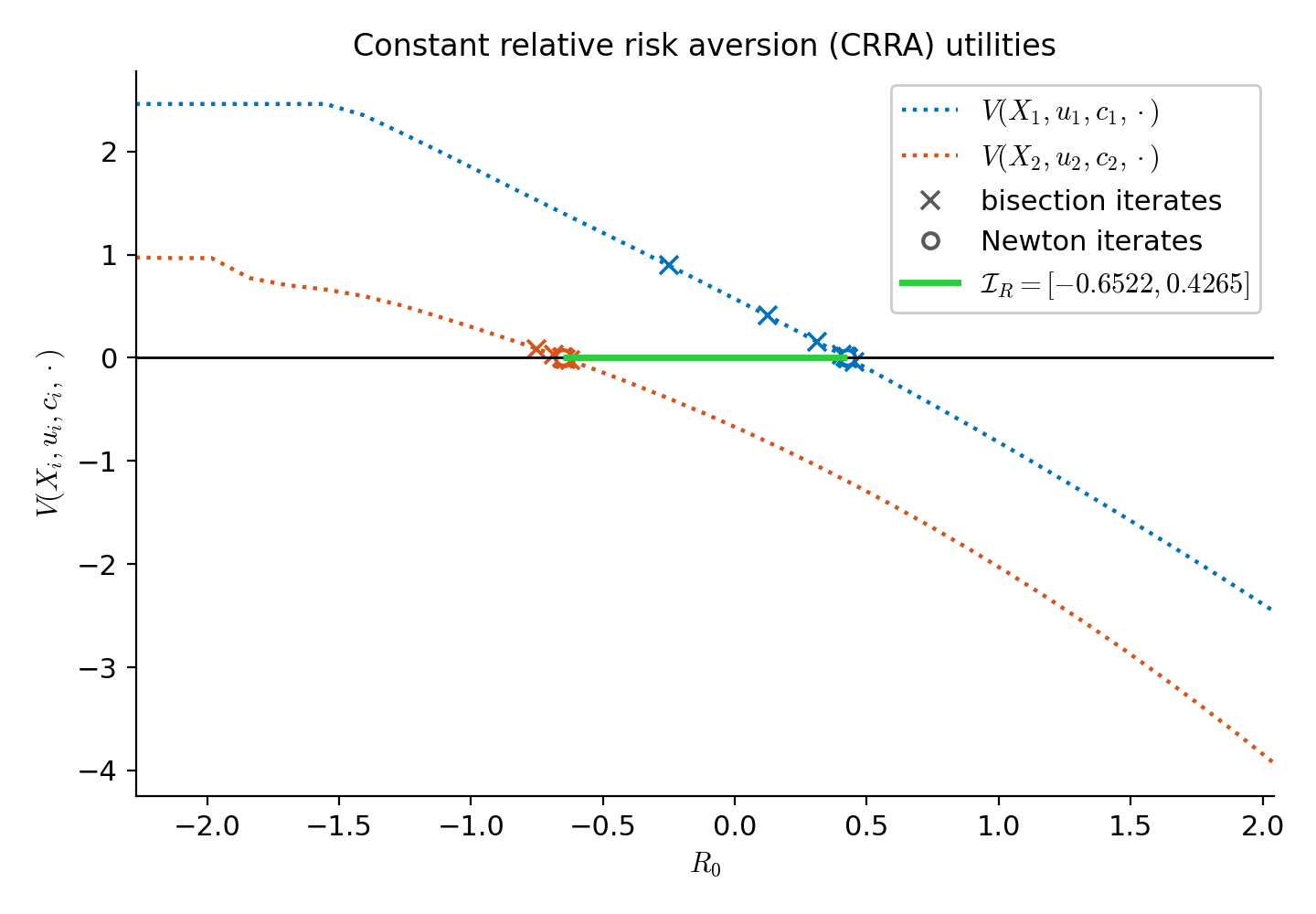}
    \caption{Computation of Nash equilibria via bisection and Newton's method.}
    \label{fig:ppa_raiz_bisec}
\end{figure}

The graph depicts the value functions of both Principals, when they can directly hire the Agent. Here, $\Ic_R=[-0.6519,0.4266]$ and we can guarantee the existence of Nash equilibria for the PPA game if $\gamma_1=1$ and $\gamma_2=0$.

\subsection{Comparison of the two algorithms}

In all the examples of the section, the Newton iterates reached objective values of order $10^{-4}$ in at most three evaluations, against
$\lceil\log_2((R-r)/10^{-4})\rceil\ge 13$ evaluations for bisection on the same brackets (see \Cref{tab:newton-vs-bisection}).
We point out that the accuracy of the zero is ultimately limited by the accuracy with which the individual problems are
solved: the SOS reformulation is nonconvex and is solved by a multistart local method, so that the computed values of
$V(X_i,u_i,c_i,\cdot)$ carry an error of order $10^{-5}$--$10^{-4}$ in the logarithmic and CRRA examples. Below that level the
sign of the value function is no longer reliable and neither bisection nor Newton can make progress, which is why the tolerance
$\varepsilon_{tol}$ should be chosen above the accuracy of the inner solver.

\begin{table}[H]
\centering
\begin{tabular}{lcccc}
\hline
Utility & $R_1$ & $R_2$ & N ($i=1$ / $i=2$) & B ($i=1$ / $i=2$)\\
\hline
Exponential & $0.12092$ & $0.05016$ & 3 / 3 & 14 / 14\\
Logarithmic & $0.12998$ & $0.05324$ & 3 / 3 & 14 / 14\\
CRRA & $0.42655$ & $-0.65219$ & 2 / 3 & 14 / 13\\
\hline
\end{tabular}
\caption{Zeros of $V(X_i,u_i,c_i,\cdot)$ in the examples of \Cref{sec:numerics} and number of evaluations needed by Function $N$ and by Function $B$ for achieving values of order $10^{-4}$.}
\label{tab:newton-vs-bisection}
\end{table}

\section{Conclusion} \label{sec:conclusion}
We have studied a competition model in which two Principals seek to hire the exclusive services of a common Agent. The main difficulty, compared with the standard PA problem, is that the Agent's reservation utility is endogenous and depends on the offers of both Principals. We have shown that this interaction can nevertheless be characterized through the value functions of the two associated PA problems. In particular, the equilibrium reservation utility must lie in an interval on which the Principals' individual value functions have different sign. This characterization also describes which Principal can profitably hire the Agent and how the equilibrium contracts relate to the solutions of the corresponding individual contracting problems. Furthermore, by incorporating the Agent's tie-breaking biases, we relate them to the signs of the Principals' payoff functions and formalize a general consistency property, without restricting \emph{a priori} the choices of an indifferent Agent.

Building on this characterization, we have proposed a procedure to identify Nash equilibria by solving the individual PA problems and comparing their value functions. Indeed, in any equilibrium, the Principal who gets to hire the Agent will do so by offering a solution to her individual contracting problem. We propose two methods for finding sign-change threshold for the value function of a Principal. The first one is bisection, which exploits monotonicity and can be applied even when the value function is discontinuous. The second one is Newton's method, valid under differentiability. We present numerical solutions to several cases of commonly used utility functions, which illustrate the applicability of the approach and show how the equilibrium can be computed through the proposed procedure.

Future research could naturally extend this competitive framework in several promising directions. One natural extension is to consider continuous-time settings or multi-agent environments where Agents can split their effort across multiple projects. Additionally, incorporating incomplete information regarding the Agent's risk preferences could further enrich the game-theoretic interactions between competing Principals.


\section*{Declarations}
\textbf{Competing interests.} The authors have not disclosed any competing interests.

\appendix
\section{Properties of the standard Principal-Agent problem} \label{sec:appendix-properties}

In this section we provide some properties of the standard Principal-Agent problem introduced in \eqref{e:PAmain-riskneutral}. We thus let $X$ be a random variable, $\Xi$ a set of $\Fc^X$-measurable random variables, $A\subset\R $ a compact set, $c:A\rightarrow\R$ a convex, differentiable and strictly increasing function and $u:\R\rightarrow\R$ a concave, differentiable and strictly increasing function. For any contract $\xi\in\Xi$, we define its upper-gap as 
\[
\alpha(\xi):=\sup\{\delta\in\R\,:\,u(\xi)+\delta\le \sup ({\rm im}\,u), ~\P-a.s.\},
\]
and the perturbed contracts 
\begin{equation}
\label{e:pertcontract}
(\forall \delta<\alpha(\xi))\quad 
\hat{\xi}_{\delta}:=u^{-1}(u(\xi)+\delta),\:\:\P-\text{a.s.},
\end{equation}
  which is well defined since $u^{-1}\colon\left]-\infty,\sup ({\rm im}\,u)\right[\to\R$ is well defined. Finally, we define the set
$$
\Sigma=\Big\{(\xi,a)\in\Xi\times A\,:\,a\in\argmax_{b\,\in\,A} \mathbb{E}^{b}[u(\xi)-c(b)] \Big\}.
$$

\begin{lem}\label{lemma:contratos-perturbados}
Let $(\xi,a)\in \Sigma$ and set $R:=\mathbb{E}^{a}[u(\xi)-c(a)]$. Then the following properties hold.
\begin{enumerate}[label=(\roman*)]
    \item \label{lemma:contratos-perturbadosi} For every $\delta<\alpha(\xi)$, the pair $(\hat{\xi}_{\delta},a)\in\Sigma$ and $\mathbb{E}^{a}[u(\hat{\xi}_{\delta})-c(a)]=R+\delta$.
    \item \label{lemma:contratos-perturbadosii} For every $\delta<\alpha(\xi)$, we have
$$\dfrac{1}{u'(\xi)}\delta\le \hat{\xi}_{\delta}-\xi\le \dfrac{1}{u'(\hat{\xi}_{\delta})}\delta, \quad \P-a.s.$$

\item \label{lemma:contratos-perturbadosiii}
Assume $\alpha(\xi)>0$. For every $b\in A$ and $\varepsilon>0$, there exists $\delta\in\left]0,\alpha(\xi)\right[$ such that
$$
(\forall \delta'\in ]0,\delta]) \quad 0<\mathbb{E}^{b}[\hat{\xi}_{\delta'}-\xi]\le\varepsilon.
$$
\end{enumerate}
\end{lem}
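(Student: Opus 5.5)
The three items are proved separately. The basic observation is that the perturbation $\hat{\xi}_\delta=u^{-1}(u(\xi)+\delta)$ shifts the Agent's utility by the constant $\delta$ pathwise. It is well defined because $\delta<\alpha(\xi)$ gives $u(\xi)+\delta<\sup({\rm im}\,u)$ almost surely, and $u^{-1}$ is defined on $]-\infty,\sup({\rm im}\,u)[$. Strictly speaking, for $\delta<0$ we should also check that $u(\xi)+\delta$ lies in ${\rm im}\,u$. If ${\rm im}\,u$ is bounded below this needs care; I would note that the paper implicitly treats $u^{-1}$ as defined there (for instance when $u$ is unbounded below), and I would adopt that convention.

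\ref{lemma:contratos-perturbadosi}: For every $b\in A$,
\[
\mathbb{E}^{b}[u(\hat{\xi}_\delta)-c(b)]=\mathbb{E}^{b}[u(\xi)-c(b)]+\delta,
\]
since $\mathbb{E}^b[\delta]=\delta$ for any probability measure. The whole map $b\mapsto U(\hat\xi_\delta,b)$ is therefore a vertical translate of $b\mapsto U(\xi,b)$. Its argmax is unchanged, so $a$ remains optimal and $(\hat{\xi}_\delta,a)\in\Sigma$. Evaluating at $b=a$ gives the value $R+\delta$. Membership of $\hat{\xi}_\delta$ in $\Xi$ is implicitly assumed, since $\hat{\xi}_\delta$ is a measurable function of $X$; I would state this as a standing assumption.

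\ref{lemma:contratos-perturbadosii}: This is pointwise concavity. At each $\omega$ the supergradient inequality for the concave differentiable function $u$ gives $u(y)\le u(x)+u'(x)(y-x)$. Take $x=\xi$, $y=\hat{\xi}_\delta$: then $\delta=u(\hat\xi_\delta)-u(\xi)\le u'(\xi)(\hat\xi_\delta-\xi)$. Since $u'>0$ (strictly increasing and concave gives $u'\ge0$; strict positivity is assumed implicitly, otherwise the quotient is interpreted as $+\infty$), dividing yields the lower bound. Swapping the roles, $x=\hat\xi_\delta$ and $y=\xi$, gives $-\delta\le u'(\hat\xi_\delta)(\xi-\hat\xi_\delta)$, which is the upper bound. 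The argument covers both signs of $\delta$.

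\ref{lemma:contratos-perturbadosiii}: This is the main obstacle, because it needs an integrability and limit argument. Strict positivity is easy: for $\delta'>0$, $\hat\xi_{\delta'}>\xi$ almost surely because $u^{-1}$ is strictly increasing, so $\mathbb{E}^b[\hat\xi_{\delta'}-\xi]>0$, assuming the expectation exists. For smallness, I would show that $\delta'\mapsto\mathbb{E}^b[\hat\xi_{\delta'}-\xi]$ is nondecreasing and tends to $0$ as $\delta'\downarrow0$. Monotonicity is immediate from pointwise monotonicity in $\delta'$, so it suffices to find one $\delta$ with the bound $\le\varepsilon$. For the limit, fix $\delta_0\in]0,\alpha(\xi)[$. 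For $\delta'\in]0,\delta_0]$ we have $0\le\hat\xi_{\delta'}-\xi\le\hat\xi_{\delta_0}-\xi$. By continuity of $u^{-1}$, $\hat\xi_{\delta'}-\xi\to0$ pointwise. Dominated (or monotone) convergence then gives $\mathbb{E}^b[\hat\xi_{\delta'}-\xi]\to0$, provided $\mathbb{E}^b[\hat\xi_{\delta_0}-\xi]<\infty$. This finiteness is the delicate point. I would obtain it from \ref{lemma:contratos-perturbadosii}, which bounds the difference by $\delta_0/u'(\hat\xi_{\delta_0})$, together with integrability of contracts in $\Xi$ (the standing assumption that the expectations $\mathbb{E}^b[\xi]$ are finite for admissible contracts). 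If that fails in general, I would add the assumption that $\hat\xi_{\delta_0}\in\Xi$ is $\mathbb{E}^b$-integrable; this holds trivially in the finite-outcome setting used in the paper.
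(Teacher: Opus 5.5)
Your proof is correct, modulo the integrability hypothesis you flag, which the paper also needs. For (i) and (ii) it is essentially the paper's argument. Item (i) is the same vertical translation. In (ii) the paper applies the tangent-line inequality to the convex function $u^{-1}$ at $u(\xi)$ and at $u(\xi)+\delta$, which is just the dual form of your two supergradient inequalities for $u$.

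The genuine difference is in (iii). The paper argues quantitatively: with $\alpha=\alpha(\xi)$ and $\sigma\in\left]0,1\right[$ it takes $\delta=\min\{\sigma\alpha,\varepsilon/\mathbb{E}^{b}[1/u'(\hat{\xi}_{\sigma\alpha})]\}$. It then combines the upper bound in (ii) with $\hat{\xi}_{\delta}\le\hat{\xi}_{\sigma\alpha}$ and the monotonicity of $u'$ to get $\mathbb{E}^{b}[\hat{\xi}_{\delta}-\xi]\le\delta\,\mathbb{E}^{b}[1/u'(\hat{\xi}_{\sigma\alpha})]\le\varepsilon$, with positivity coming from the lower bound in (ii). This buys an explicit estimate linear in $\delta$, the same mechanism that reappears with the constant $K$ in \Cref{lemma:continuidad-V} and in the Dini-derivative bounds. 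However, it silently requires $\mathbb{E}^{b}[1/u'(\hat{\xi}_{\sigma\alpha})]<+\infty$; otherwise the chosen $\delta$ is $0$.

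Your dominated-convergence argument gives existence of $\delta$ without a rate. In exchange it only needs $\mathbb{E}^{b}[\hat{\xi}_{\delta_0}-\xi]<+\infty$ for a single $\delta_0$, which is weaker: by (ii) it follows from the paper's hidden hypothesis with $\delta_0=\sigma\alpha$. Under your standing assumptions ($\hat{\xi}_{\delta_0}\in\Xi$ and admissible contracts $\mathbb{E}^{b}$-integrable) it holds directly, so the appeal to (ii) in your last step is unnecessary.

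Your caution is justified. If $u(x)=\log\log x$ for large $x$ and $\xi$ is integrable with $\mathbb{E}^{b}[\xi^{p}]=+\infty$ for all $p>1$, then $\hat{\xi}_{\delta}=\xi^{e^{\delta}}$ on large values. Hence (iii) fails for every $\delta>0$ without some such hypothesis.

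Your caveats about $u'>0$ and about $\inf({\rm im}\,u)$ are, by contrast, automatic in the paper's setting $u\colon\R\to\R$. If $u'(x_0)=0$, concavity forces $u'\le 0$ on $[x_0,+\infty[$, contradicting strict monotonicity. Then $u(x)\le u(0)+u'(0)x\to-\infty$ as $x\to-\infty$. So $u^{-1}$ is defined on all of $\left]-\infty,\sup({\rm im}\,u)\right[$, and negative $\delta$ cause no trouble.
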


\begin{proof} \ref{lemma:contratos-perturbadosi}: Note that, for every $b\in A$, we have
\[
\mathbb{E}^b[u(\hat{\xi}_{\delta})-c(b)]=\mathbb{E}^b[u(\xi)+\delta-c(b)]=\mathbb{E}^b[u(\xi)-c(b)]+\delta.
\]
The result is clear since $(\xi,a)\in \Sigma$.

\ref{lemma:contratos-perturbadosii}: Since $u^{-1}$ is convex and $u$ is strictly increasing, the inverse function theorem yields
$$\xi+\frac{1}{u'(\xi)}\delta=u^{-1}(u(\xi))+(u^{-1})'(u(\xi))(u(\xi)+\delta-u(\xi))\le u^{-1}(u(\xi)+\delta)=\hat{\xi}_{\delta}$$
and, similarly,
$$\hat{\xi}_{\delta}-\frac{1}{u'(\hat{\xi}_{\delta})}\delta=u^{-1}(u(\xi)+\delta)+(u^{-1})'(u(\xi)+\delta)(u(\xi)-(u(\xi)+\delta))\le u^{-1}(u(\xi))=\xi.$$
Hence,
$$\frac{1}{u'(\xi)}\delta\le\hat{\xi}_{\delta}-\xi\le \frac{1}{u'(\hat{\xi}_{\delta})}\delta, \quad \P-a.s.,$$
and the result follows.

\ref{lemma:contratos-perturbadosiii}: Let us call $\alpha=\alpha(\xi)$ to simplify the notations. Let $b\in A$, let $\varepsilon>0$, let $\sigma\in\left]0,1\right[$ and set 
$$\delta=\min\left\{\sigma\alpha,\dfrac{\varepsilon}{\mathbb{E}^{b}\left[\frac{1}{u'(\hat{\xi}_{\sigma\alpha})}\right]}\right\}\in\left]0,\alpha\right[.$$ We have two cases.
\begin{itemize}
    \item If $\varepsilon<\sigma\alpha\mathbb{E}^{b}\left[\frac{1}{u'(\hat{\xi}_{\sigma\alpha})}\right]$, then $\delta=\varepsilon/\mathbb{E}^{b}\left[\frac{1}{u'(\hat{\xi}_{\sigma\alpha})}\right]<\sigma\alpha$. Moreover,
    noting that $\alpha(\hat{\xi}_{\delta})=\alpha-\delta>0$ yields $\sigma\alpha-\delta<\alpha(\hat{\xi}_{\delta})$, it follows from\footnote{Note that $u(\hat{\xi}_{\sigma\alpha})=u(\xi)+\delta+(\sigma\alpha-\delta)=u((\widehat{\hat{\xi}_{\delta}})_{\sigma\alpha-\delta})$} 
    $\hat{\xi}_{\sigma\alpha}=(\widehat{\hat{\xi}_{\delta}})_{\sigma\alpha-\delta}$, \ref{lemma:contratos-perturbadosi} and 
    \ref{lemma:contratos-perturbadosii} that
     $$
     0<\dfrac{1}{u'(\hat{\xi}_{\delta})}(\sigma\alpha-\delta)\le\hat{\xi}_{\sigma\alpha}-\hat{\xi}_{\delta},  \quad \P-a.s.
     $$
    Moreover, since $u$ is concave, $u'$ is decreasing, which implies that 
    $$
    \frac{1}{u'(\hat{\xi}_{\delta})}\le\frac{1}{u'(\hat{\xi}_{\sigma\alpha})},  \quad \P-a.s.
    $$
    Therefore, it follows from \ref{lemma:contratos-perturbadosii} that $0<\mathbb{E}^{b}[\hat{\xi}_{\delta}-\xi]\le \mathbb{E}^{b}\left[\frac{1}{u'(\hat{\xi}_{\delta})}\right]\delta\le \mathbb{E}^{b}\left[\frac{1}{u'(\hat{\xi}_{\sigma\alpha})}\right]\delta=\varepsilon$.
    \item If $\varepsilon\ge\sigma\alpha\mathbb{E}^{b}\left[\frac{1}{u'(\hat{\xi}_{\sigma\alpha})}\right]$, then $\delta=\sigma\alpha$ and the result is direct from \ref{lemma:contratos-perturbadosii}. 
\end{itemize}
The conclusion follows from the monotonicity of the contracts $\hat{\xi}_\delta$. \qedhere
\end{proof}

\begin{lem} \label{lema:matchingR0} 
Let $R_0\in\R$ be such that $S(X,u,c,R_0)\neq \emptyset$. Then for every $(\xi,a)\in S(X,u,c,R_0)$ we have $\mathbb{E}^a[u(\xi)-c(a)]=R_0$.
\end{lem}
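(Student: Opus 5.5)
We argue by contradiction, reducing the utility the Agent receives while keeping the recommended action unchanged. Let $(\xi,a)\in S(X,u,c,R_0)$. Feasibility in \eqref{e:PAmain-riskneutral} gives $(\xi,a)\in\Sigma$ and $R:=\mathbb{E}^a[u(\xi)-c(a)]\ge R_0$. Suppose that $R>R_0$, and set $\Delta:=R-R_0>0$.

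Fix $\delta\in\left]0,\Delta\right]$ and consider the perturbed contract $\hat{\xi}_{-\delta}=u^{-1}(u(\xi)-\delta)$ from \eqref{e:pertcontract}. It is well defined because $-\delta<0\le\alpha(\xi)$. Indeed $\alpha(\xi)\ge0$ always holds, since $u(\xi)\le\sup({\rm im}\,u)$ almost surely. By \Cref{lemma:contratos-perturbados}\ref{lemma:contratos-perturbadosi}, we get $(\hat{\xi}_{-\delta},a)\in\Sigma$, so incentive compatibility still holds with the same action $a$. The same lemma gives
\[
\mathbb{E}^a[u(\hat{\xi}_{-\delta})-c(a)]=R-\delta\ge R_0,
\]
so the participation constraint also holds. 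The pair $(\hat{\xi}_{-\delta},a)$ is therefore feasible for \eqref{e:PAmain-riskneutral}.

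Next we compare the objective values. Since $u$ is strictly increasing, $\hat{\xi}_{-\delta}<\xi$ almost surely. More quantitatively, \Cref{lemma:contratos-perturbados}\ref{lemma:contratos-perturbadosii} with $-\delta$ in place of $\delta$ gives
\[
\xi-\hat{\xi}_{-\delta}\ \ge\ \frac{\delta}{u'(\xi)}\ >\ 0 \quad \P\text{-a.s.}
\]
Hence $\mathbb{E}^a[\xi-\hat{\xi}_{-\delta}]>0$, that is, $\mathbb{E}^a[X-\hat{\xi}_{-\delta}]>\mathbb{E}^a[X-\xi]$. This contradicts the optimality of $(\xi,a)$, and therefore $R=R_0$.

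The only delicate step is a technical one. We must ensure that $\hat{\xi}_{-\delta}\in\Xi$, since the appendix lemmas implicitly treat perturbed contracts as admissible (this is how they are used in \Cref{lema:first-properties}). We also need the expectations to be finite, so that the strict inequality between the objectives is meaningful. A strictly positive random variable has strictly positive expectation under $\P^a$, which gives the strict inequality once the expectations exist. I would state the admissibility of perturbed contracts explicitly as the standing convention used throughout the appendix.
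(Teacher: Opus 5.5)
Your argument is correct and is essentially the paper's proof: lower the payment to $u^{-1}(u(\xi)-\delta)$ with $0<\delta\le R-R_0$, use Lemma~\ref{lemma:contratos-perturbados}\ref{lemma:contratos-perturbadosi} to keep both incentive compatibility and participation, and conclude from $\hat{\xi}_{-\delta}<\xi$ a.s.\ that the Principal's payoff strictly increases.

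One small slip: Lemma~\ref{lemma:contratos-perturbados}\ref{lemma:contratos-perturbadosii} with $-\delta$ in place of $\delta$ actually gives $\delta/u'(\hat{\xi}_{-\delta})\le \xi-\hat{\xi}_{-\delta}\le \delta/u'(\xi)$. So your displayed lower bound has the wrong denominator; by concavity, $\delta/u'(\xi)$ is an upper bound. This is harmless, because strict monotonicity of $u$, which you invoke first, already gives $\xi-\hat{\xi}_{-\delta}>0$ a.s.
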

\begin{proof} Suppose by contradiction $\mathbb{E}^a[u(\xi)-c(a)]>R_0$. Consider $0<\varepsilon\leq\mathbb{E}^a[u(\xi)-c(a)]-R_0$ and define the contract $\hat{\xi}=u^{-1}(u(\xi)-\varepsilon)$. By  \Cref{lemma:contratos-perturbados}
$$
a\in\argmax_{b\in A}\mathbb{E}^b[u(\hat{\xi})-c(b)],\quad \mathbb{E}^b[u(\hat{\xi})-c(b)]\geq R_0,\quad \mathbb{E}^{a}[\hat{\xi}-\xi]<0.
$$
By linearity of the expected value operator
$$
\mathbb{E}^a[X-\hat{\xi}]>\mathbb{E}^a[X-\hat{\xi}]+\mathbb{E}^a[\hat{\xi}-\xi]=\mathbb{E}^a[X-\xi],
$$
which is a contradiction.\qedhere
\end{proof}

\begin{lem}\label{lemma:decrecimiento-V}
The value function $V(X,u,c,\cdot):\R\to\R$ is non-increasing. If there exists an interval $I\subset\R$ such that $S(X,u,c,R)\neq\emptyset$ for every $R\in I$, then $V(X,u,c,\cdot):I\to\R$ is strictly decreasing. 
\end{lem}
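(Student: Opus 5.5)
The plan is to compare feasible sets directly for monotonicity, and then use the perturbed contracts of \Cref{lemma:contratos-perturbados} together with \Cref{lema:matchingR0} to get strictness.

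\textbf{Non-increasing.} Let $R\le R'$. Any $(\xi,a)\in\Sigma$ with $\mathbb{E}^a[u(\xi)-c(a)]\ge R'$ also satisfies the participation constraint at level $R$. So the feasible set of \eqref{e:PAmain-riskneutral} at $R'$ is contained in the one at $R$. Taking suprema of $\mathbb{E}^a[X-\xi]$ gives $V(X,u,c,R')\le V(X,u,c,R)$. This holds with the usual convention $\sup\emptyset=-\infty$, and no existence of solutions is needed.

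\textbf{Strict decrease on $I$.} Take $R<R'$ in $I$ and pick $(\xi',a')\in S(X,u,c,R')$, which is nonempty by hypothesis. By \Cref{lema:matchingR0}, $\mathbb{E}^{a'}[u(\xi')-c(a')]=R'$.

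Set $\delta:=R-R'<0$. Since $\delta<0<\alpha(\xi')$ whenever $\alpha(\xi')>0$, the perturbed contract $\hat\xi_\delta=u^{-1}(u(\xi')+\delta)$ is well defined in that case. If instead $u(\xi')$ equals $\sup({\rm im}\,u)$ on a set of positive probability, note that $u(\xi')+\delta<\sup({\rm im}\,u)$ still holds a.s. for $\delta<0$. So $u^{-1}$ can be applied a.s. in either case, and the argument of \Cref{lemma:contratos-perturbados}\ref{lemma:contratos-perturbadosi} goes through for every negative $\delta$.

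Then \Cref{lemma:contratos-perturbados}\ref{lemma:contratos-perturbadosi} gives $(\hat\xi_\delta,a')\in\Sigma$ with Agent utility $R'+\delta=R$. Hence $(\hat\xi_\delta,a')$ is feasible at level $R$. Since $u$ is strictly increasing, $\hat\xi_\delta<\xi'$ a.s., so $\mathbb{E}^{a'}[\xi'-\hat\xi_\delta]>0$. Therefore
\[
V(X,u,c,R)\ge \mathbb{E}^{a'}[X-\hat\xi_\delta]>\mathbb{E}^{a'}[X-\xi']=V(X,u,c,R').
\]

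\textbf{Main obstacle: admissibility in $\Xi$.} The delicate point is whether $\hat\xi_\delta\in\Xi$. \Cref{lemma:contratos-perturbados} is stated as producing elements of $\Sigma\subset\Xi\times A$, and the earlier lemmas use it the same way, so I will rely on that implicit stability of $\Xi$ under these perturbations. A second, minor issue is that $\mathbb{E}^{a'}[\xi'-\hat\xi_\delta]$ must be finite and positive. Strict positivity of a nonnegative variable that is a.s. positive gives a positive expectation. Finiteness follows from \Cref{lemma:contratos-perturbados}\ref{lemma:contratos-perturbadosii}, or the expectation is simply $+\infty$, which only reinforces the strict inequality.
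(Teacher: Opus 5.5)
Your proof is correct and is essentially the paper's: non-increase from the nesting of the feasible sets, and strict decrease by lowering a solution at the higher level via \Cref{lemma:contratos-perturbados}\ref{lemma:contratos-perturbadosi}, which the paper phrases as a contradiction with $V(X,u,c,R_1)=V(X,u,c,R_2)$ rather than as your direct inequality. Two minor remarks: the case $u(\xi')=\sup({\rm im}\,u)$ cannot occur, since a strictly increasing $u$ on $\R$ never attains its supremum, so $\hat\xi_\delta$ is always defined for $\delta<0$; and \Cref{lema:matchingR0} is not needed, because feasibility at level $R$ only requires the Agent's utility to be at least $R$.
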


\begin{proof} (i) For any $R\in\R$ define the feasible set
\begin{equation*}
F(R):=\left\{(\xi,a)\in\Sigma \mid \mathbb{E}^{a}[u(\xi)-c(a)]\geq R\right\}.    
\end{equation*}
Since for every $R_1<R_2$ it holds $F({R_2})\subseteq F({R_1})$, it follows that $V(X,u,c,\cdot):I\to\R$ is non-increasing. 

\medskip
(ii) Suppose there exist $R_1,R_2\in I$ with $R_1<R_2$ and such that $V(X,u,c,R_2)=V(X,u,c,R_1)$. Let $(\xi,a)\in S(X,u,c,R_2)$, then by \Cref{lemma:contratos-perturbados} the contract $\hat{\xi}=u^{-1}(u(\xi)-(R_2-R_1))$ satisfies
$$
a\in\argmax_{b\in A}\mathbb{E}^{b}[u(\hat\xi)-c(b)],\quad\mathbb{E}^{a}[u(\hat\xi)-c(a)]= R_1,\quad \mathbb{E}^{a}[\hat{\xi}-\xi]<0.
$$
Then, we would reach the following contradiction
$$
V(X,u,c,R_2) =\mathbb{E}^{a}[X-\xi] <\mathbb{E}^{a}[X-\xi]+\mathbb{E}^{a}[\xi-\hat{\xi}]=\mathbb{E}^{a}[X-\hat{\xi}] \leq V(X,u,c,R_1).
$$
We conclude that $V(X,u,c,\cdot)$ is a strictly decreasing function.
\end{proof}

\begin{lem}\label{lemma:continuidad-V}
Suppose there exists an open interval $I\subset\R,\,\bar{\alpha}>0$ and $K>0$ such that, for every $R\in I$, there exists $(\xi,a)\in S(X,u,c,R)$ satisfying $\alpha(\xi)>\bar{\alpha}$ and $\mathbb E^{a}\big[1/u'(\hat\xi_{\bar\alpha})\big]\le K$. Then $V(X,u,c,\cdot)$ is continuous on the set $I$.
\end{lem}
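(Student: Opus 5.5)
We prove continuity at an arbitrary $R\in I$ by combining the monotonicity of \Cref{lemma:decrecimiento-V} with a quantitative Lipschitz-type estimate built from the perturbed contracts of \Cref{lemma:contratos-perturbados}. Since $V(X,u,c,\cdot)$ is non-increasing, it suffices to show that for $R<R'$ in $I$ with $R'-R$ small, the drop $V(X,u,c,R)-V(X,u,c,R')$ is controlled. The plan is to show
$$
0\le V(X,u,c,R)-V(X,u,c,R')\le K\,(R'-R)\qquad\text{whenever } R<R',\ R,R'\in I,\ R'-R<\bar\alpha,
$$
which yields local Lipschitz continuity on $I$, and in particular continuity.

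\textbf{Construction.} Fix such $R<R'$. By hypothesis, pick $(\xi,a)\in S(X,u,c,R)$ with $\alpha(\xi)>\bar\alpha$ and $\mathbb E^a[1/u'(\hat\xi_{\bar\alpha})]\le K$. By \Cref{lema:matchingR0}, $\mathbb E^a[u(\xi)-c(a)]=R$. Set $\delta:=R'-R\in\left]0,\bar\alpha\right[$ and consider $\hat\xi_\delta=u^{-1}(u(\xi)+\delta)$, which is well defined because $\delta<\alpha(\xi)$. By \Cref{lemma:contratos-perturbados}\ref{lemma:contratos-perturbadosi}, $(\hat\xi_\delta,a)\in\Sigma$ and the Agent's utility equals $R+\delta=R'$. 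Hence the pair is feasible for the problem at $R'$, and
$$
V(X,u,c,R')\ge \mathbb E^a[X-\hat\xi_\delta]=V(X,u,c,R)-\mathbb E^a[\hat\xi_\delta-\xi].
$$
We implicitly need $\hat\xi_\delta\in\Xi$; we take $\Xi$ to be stable under these monotone transformations, as is tacitly used throughout, e.g.\ in \Cref{lemma:contratos-perturbados}.

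\textbf{Bounding the payment increase.} By \Cref{lemma:contratos-perturbados}\ref{lemma:contratos-perturbadosii}, $\hat\xi_\delta-\xi\le \delta/u'(\hat\xi_\delta)$ almost surely. Since $u$ is concave, $u'$ is non-increasing; and since $\hat\xi_\delta\le\hat\xi_{\bar\alpha}$ pointwise (because $\delta<\bar\alpha$ and $u^{-1}$ is increasing), we get $1/u'(\hat\xi_\delta)\le 1/u'(\hat\xi_{\bar\alpha})$. Taking $\mathbb E^a$ gives $\mathbb E^a[\hat\xi_\delta-\xi]\le K\delta$. Combining this with monotonicity yields the displayed two-sided estimate.

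\textbf{Concluding continuity.} The estimate holds for every pair in $I$ with gap less than $\bar\alpha$, so both one-sided limits at each point of $I$ agree with the value there. Finiteness of $V$ on $I$ follows from the existence of solutions.

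\textbf{Main obstacle.} The delicate point is that the bound uses a solution at the \emph{lower} level $R$, so the uniform constants must be available at every point of $I$. The hypothesis provides exactly this, and one must make sure $\hat\xi_{\bar\alpha}$ is well defined, which holds because $\alpha(\xi)>\bar\alpha$. A second point is that $u'>0$ on the relevant range, so $1/u'$ is finite; this is guaranteed by the integrability bound $K$.
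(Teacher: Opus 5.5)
Your proof is correct, and it takes a slightly different route from the paper.

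\textbf{What is shared.} Your key estimate is exactly the one the paper uses for left-continuity. The paper takes a solution $(\eta,\beta)$ at the lower level $R-\delta$ with the uniform constants. It lifts it by $\delta$ via \Cref{lemma:contratos-perturbados}\ref{lemma:contratos-perturbadosi}--\ref{lemma:contratos-perturbadosii} and bounds the extra payment by $K\delta$.

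\textbf{Where you differ.} The paper treats right-continuity separately. It takes a solution at $R$ itself, uses \Cref{lemma:contratos-perturbados}\ref{lemma:contratos-perturbadosiii} to make $\mathbb E^a[\hat\xi_\delta-\xi]<\varepsilon$ for small $\delta$, and then concludes by monotonicity. You instead observe that the same uniform estimate, applied with $R$ as the lower endpoint, already gives right-continuity.

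\textbf{What each route buys.} Your route is shorter and proves more: $V(X,u,c,\cdot)$ is locally Lipschitz on $I$, with $0\le V(X,u,c,R)-V(X,u,c,R')\le K(R'-R)$ whenever $R,R'\in I$ and $0<R'-R<\bar\alpha$. The paper's split route shows that right-continuity needs no uniform constants at all. Some solution at $R$ with $\alpha(\xi)>0$ suffices, which is what the remark after the lemma records.

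\textbf{A minor point.} You do not need the bound $K$ to ensure $u'>0$. This follows from $u$ being concave and strictly increasing: if $u'(x_0)=0$, then $u'\le 0$ on $[x_0,\infty[$, which contradicts strict monotonicity.
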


\begin{proof}
    \medskip
Let $R\in I$ and $\varepsilon>0$. Take $(\xi,a)\in S(X,u,c,R)$ such that $\alpha(\xi)>0$. From \Cref{lemma:contratos-perturbados}\ref{lemma:contratos-perturbadosiii}, take $\delta>0$ such that $R+\delta\in I$ and $\mathbb{E}^{a}[\hat{\xi}_\delta-\xi]<\varepsilon$. It follows from \Cref{lemma:contratos-perturbados}\ref{lemma:contratos-perturbadosi} that $\hat{\xi_\delta}$ satisfies
\[
a\in\argmax_{b\in A}\mathbb{E}^{b}[u(\hat{\xi_\delta})-c(b)], ~~\mathbb{E}^{a}[u(\hat{\xi_\delta})-c(a)]= R+\delta,
\]
and therefore
\[
V(X,u,c,R)= \mathbb{E}^{a}[X-\xi] \leq\mathbb{E}^{a}[X-\xi]+ V(X,u,c,R+\delta)-\mathbb{E}^{a}[X-\hat{\xi}_\delta]<\varepsilon + V(X,u,c,R+\delta).
\]

Consider now $\delta\in\left]0,\min\{\bar{\alpha} ,\frac{\varepsilon}{K}\}\right[$ be such that $R-\delta\in I$  and take $(\eta,\beta)\in S(X,u,c,R-\delta)$ such that $\alpha(\eta)>\bar{\alpha}$ and $\mathbb E^{\beta}\big[1/u'(\hat\eta_{\bar\alpha})\big]\le K$, where $\hat\eta_{\bar\alpha}$ is the contract $\eta$ perturbed by $\bar\alpha$, as defined in \eqref{e:pertcontract}. Then, from \Cref{lemma:contratos-perturbados}\ref{lemma:contratos-perturbadosi}, we have  $\mathbb{E}^{\beta}[X-\hat{\eta}_\delta]\leq V(X,u,c,R)$. Then, from \Cref{lemma:contratos-perturbados}\ref{lemma:contratos-perturbadosii} and the monotonicity of $u'$, we conclude
\[
V(X,u,c,R-\delta) - V(X,u,c,R)   \leq  \mathbb{E}^{\beta}[X-\eta]-\mathbb{E}^{\beta}[X-\hat{\eta}_\delta] \leq \delta \mathbb E^{\beta}\big[1/u'(\hat\eta_{\delta})\big] <  K\delta < \varepsilon.
\]
Since $V(X,u,c,\cdot)$ is monotonic, we have thus its left- and right-continuity. 
\end{proof}

\begin{rem}
We can see in the proof of \Cref{lemma:continuidad-V} that the right-continuity of the value function does not require the existence of $\bar{\alpha}$ and $K$, but rather the positivity of the upper-gaps. Note that if $X$ takes values in a finite set, then $\bar{\alpha}$ exists (see \Cref{rem:when-assump-H-holds}) and, since $u'$ is bounded, also exists $K$. 
\end{rem}

\section{Existence of solutions to the standard Principal-Agent problem} \label{sec:existence}

Based on \Cref{ass:existence-nash-competition}, we present examples of settings in which one can guarantee existence of solutions to the standard Principal-Agent problem. In the context of \Cref{sec:appendix-properties}, define the expected utility of the Principal $W:\Xi\times A\rightarrow\R$ and of the Agent $U:\Xi\times A\rightarrow\R$ respectively by $W(\xi,a):=\E^a[X(a)-\xi]$ and $U(\xi,a):=\E^a[u(\xi)-c(a)]$. Define also the feasible set of the Principal and the maximum utility of the Agent as follows, for $R\in\R$
\[
F(R) := \{ (\xi,a)\in\Sigma: ~ U(\xi,a) \geq R \}, \quad \bar{R}:= \sup_{(\xi,a)\in\Sigma} U(\xi,a).
\]

\subsection{Compact space of contracts}  \label{sec:example-compact}
Inspired by \cite{renner15principal}, suppose that the contract set $\Xi$ is a compact metric space and the expected utility functions $W$ and $U$ are continuous. Then \Cref{ass:existence-nash-competition} holds.

Indeed, consider the value function $\Psi:\Xi\rightarrow\R$ defined by $\Psi(\xi):=\max_{a\in A} U(\xi,a)$. It follows from Berge's maximum\footnote{See for instance see for instance Theorem 17.31 in \cite{aliprantis2006infinite}.}  theorem that $\Psi$ is continuous. Note that, by compactness, the values $\bar{R}$ is attained. Then, it is easy to see that for $R\le\bar{R}$ the set $F(R)$ is non-empty and we can represent it as
\[
F(R) = \{ (\xi,a)\in\Xi\times A: U(\xi,a) = \Psi(\xi), ~U(\xi,a)\geq R \}.
\]
Moreover, by continuity of $U$ and $\Psi$, the set $F(R)$ is a closed subset of the compact $\Xi\times A$ and therefore compact. Since $W$ is continuous, we conclude that $S(X,u,c,R)\neq\emptyset$ for every $R\le\bar{R}$. 

In infinite dimension, we can choose 
the set $\Xi$ as a classical compact subsets of $L^p$, 
while continuity of functions $U$ and $W$ hold under dominated convergence assumption on the densities of $X$. In the case of finite outcomes,
compactness follows from closedness and boundedness of $\Xi$, while continuity of functions $U$ and $W$ hold under continuity of the density of $X$.

\subsection{Outcome with finite support} \label{sec:example-finite-support}
Suppose that $X$ takes finitely many values, denoted by $x_0<x_1<\dots<x_n$. In this case, we can identify a contract $\xi$ with a vector $w=(w_0,\dots,w_n)\in\R^{n+1}$. Suppose that $\Xi$ is a closed subset of $\R^{n+1}$ which contains the deterministic contracts, that is, with the form $\{(w,\dots,w): w\in\R\}$. Suppose moreover that the probabilities $a\mapsto p_j(a):= \P^a(X=x_j)$ are continuous on $A$ and there exists $\underline{p}>0$ such that $p_j(a)\geq \underline{p}$ for every $j\in\{0,\dots,n\}$. Finally, we assume $M_X:=\sup_{a\in A} \E^a[X]<\infty$ and that the utility function $u$ is not affine\footnote{The affine case is discussed in \Cref{sec:affine}}, that is
\[
\theta_0 := \inf_{w\in\R} u'(w) < \theta_1:= \sup_{w\in\R} u'(w).
\]

Define 
$\underline{c}:=\min_{a\in A} c(a)$. Some simple computations, and Jensen's inequality, show that every $(\xi,a)$ in the feasible set $F(R)$ satisfies $\E^a[\xi]\geq u^{-1}(R+\underline{c})$. Moreover, by Ces\`aro's lemma, for every $\theta\in(\theta_0,\theta_1)$ the map $\Phi_\theta(x):=\theta x-u(x)$ is coercive and hence the set
\[
G_\lambda:= \{ (\xi,a)\in F(R): W(\xi,a) \geq \lambda\},
\]
is bounded for any $\lambda\in\R$. Let $\lambda_0$ be such that $G_{\lambda_0}\neq\emptyset$, otherwise the problem is trivial. Since, by continuity, the set $G_\lambda$ is closed, the compactness of $G_{\lambda_0}$ implies that $S(X,u,c,R)\neq\emptyset$ for every $R< \bar{R}$.

\section{The case of affine utility of the Agent} \label{sec:affine}

In this section, we provide some results for the case in which the Agent's utility function is affine, that is, it is given by $u(x):=\alpha x+\beta$, for some $\alpha,\beta\in\R$ with $\alpha>0$. Notice that in this case \eqref{ass:H} holds.

Throughout this section, we assume that the set of admissible contracts $\Xi$ is stable under translations by constants, that is, $\xi\in\Xi$ implies $\xi+\delta\in\Xi$ for every $\delta\in\R$. This ensures that the transformations of contracts used in the next proof are admissible. We also assume the existence of $R_0$ such that the set $S(X,u,c,R_0)\neq\emptyset$.

\begin{rem}\label{rem:existencia-R0}
A sufficient condition for the set $S(X,u,c,R_0)$ being nonempty is that the set of implementable actions $A_I:=\{a\in A: \exists\xi\in\Xi, (\xi,a)\in\Sigma \}$ is compact and $a\mapsto \E^a[X]$ is upper semi-continuous. See \Cref{lemma:existencia-R0} for the details.      
\end{rem}

\begin{lem}\label{lemma:linear-case-values}
In the context of this section
$$
(\xi,a)\in S(X,u,c,R_0)\implies \left(\xi+\frac{\delta}{\alpha},a\right)\in S(X,u,c,R_0+\delta),\,\forall\delta\in\R.
$$
Consequently, for every $\delta\in\R$ we have $V(X,u,c,R_0+\delta)=V(X,u,c,R_0)-\frac{\delta}{\alpha}$.
\end{lem}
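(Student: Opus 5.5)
The plan is to show that translating a contract by a constant is a bijection between feasible sets that shifts the Principal's payoff by an explicit constant. The first step is to record that, for affine $u(x)=\alpha x+\beta$ and any $\delta\in\R$, we have $u(\xi+\delta/\alpha)=u(\xi)+\delta$. This is exactly the perturbed contract $\hat\xi_\delta$ from \eqref{e:pertcontract}. Here $\alpha(\xi)=+\infty$ since ${\rm im}\,u=\R$, so $\hat\xi_\delta$ is defined for every real $\delta$, and it is admissible because $\Xi$ is stable under translations. By \Cref{lemma:contratos-perturbados}\ref{lemma:contratos-perturbadosi}, or directly since $\E^b[u(\xi+\delta/\alpha)-c(b)]=\E^b[u(\xi)-c(b)]+\delta$ for every $b\in A$, the Agent's objective is shifted by a constant independent of $b$. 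Hence the argmax set is unchanged, so $(\xi,a)\in\Sigma$ if and only if $(\xi+\delta/\alpha,a)\in\Sigma$, and $U(\xi+\delta/\alpha,a)=U(\xi,a)+\delta$.

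It follows that the map $T_\delta(\xi,a):=(\xi+\delta/\alpha,a)$ sends the feasible set $F(R_0)$ bijectively onto $F(R_0+\delta)$, with inverse $T_{-\delta}$. On the Principal's side, $\E^a[X-(\xi+\delta/\alpha)]=\E^a[X-\xi]-\delta/\alpha$. So the objective on $F(R_0+\delta)$, pulled back through $T_\delta$, is the objective on $F(R_0)$ minus the constant $\delta/\alpha$.

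Taking suprema gives $V(X,u,c,R_0+\delta)=V(X,u,c,R_0)-\delta/\alpha$, and maximizers correspond under $T_\delta$. In particular, if $(\xi,a)\in S(X,u,c,R_0)$, then $T_\delta(\xi,a)$ is feasible at level $R_0+\delta$ and attains the value $V(X,u,c,R_0)-\delta/\alpha=V(X,u,c,R_0+\delta)$. This is the first claim. The value identity then holds for every $\delta\in\R$, whether or not $S$ is nonempty at the other level, because the bijection argument does not use attainment. I do not expect any genuine obstacle; the only point needing care is verifying that $T_\delta$ is onto $F(R_0+\delta)$, which follows from applying the same computation with $-\delta$.
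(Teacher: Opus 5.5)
Your proposal is correct and follows essentially the paper's argument. The paper also identifies $\xi+\delta/\alpha$ with the perturbed contract $\hat\xi_\delta$ and uses \Cref{lemma:contratos-perturbados}\ref{lemma:contratos-perturbadosi} for feasibility at level $R_0+\delta$, then rules out a better competitor $(\eta,e)$ by pulling it back via $\hat\eta_{-\delta}$, which is your inverse map $T_{-\delta}$ phrased as a contradiction. Your bijection formulation has the minor advantage of giving the value identity directly, without needing $S(X,u,c,R_0)\neq\emptyset$.
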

\begin{proof} Let $(\xi,a)\in S(X,u,c,R_0)$ and $\delta\in \R$. From \Cref{lemma:contratos-perturbados}\ref{lemma:contratos-perturbadosi}, we have that $(\hat\xi_\delta,a)$ satisfies the constraints of the problem $PA(X,u,c,R_0+\delta)$, so we just have to show that it is a solution to the problem. Suppose, to the contrary, that there exists $(\eta,e)$ feasible in $PA(X,u,c,R_0+\delta)$ such that
$$
\E^{e}[X-\eta]>\E^{a}[X-\hat\xi_\delta].
$$
Note that, for any $x\in\R$
$$
u^{-1}(u(x)+\delta)=\frac{(\alpha x+\beta+\delta)-\beta}{\alpha}=x+\frac{\delta}{\alpha}.
$$
Hence $\hat\xi_\delta=\xi+\frac{\delta}{\alpha}$, which implies
$$
\mathbb{E}^a[X-\hat\xi_\delta]=\mathbb{E}^{a}[X-\xi]-\frac{\delta}{\alpha}.
$$
Now, again from \Cref{lemma:contratos-perturbados}\ref{lemma:contratos-perturbadosi}, the pair $(\hat\eta_{-\delta},e)$ is feasible in $PA(X,u,c,R_0)$, and
$$ \mathbb{E}^{e}[X-\hat\eta_{-\delta}] =\mathbb{E}^{e}[X-\eta]+\frac{\delta}{\alpha}
>\E^{a}[X-\hat\xi_\delta]+\frac{\delta}{\alpha} =\mathbb{E}^{a}[X-\xi].
$$
This would imply $(\xi,a)\not\in S(X,u,c,R_0)$, which is a contradiction. \qedhere
\end{proof}

\begin{lem}\label{lemma:existencia-R0}
Under the assumptions in \Cref{rem:existencia-R0}, for every $R_0\in\R$ the set $S(X,u,c,R_0)$ is nonempty.
\end{lem}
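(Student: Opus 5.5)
The plan is to reduce the problem for an arbitrary reservation utility $R_0$ to maximizing the upper semicontinuous map $a\mapsto \E^a[X]$ over the compact set $A_I$, shifting contracts by constants to meet the participation constraint exactly.

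\emph{Step 1: cheapest implementing contract.} Fix $R_0\in\R$. For each $a\in A_I$, pick $\xi$ with $(\xi,a)\in\Sigma$. Since $u(x)=\alpha x+\beta$ and $\Xi$ is stable under constant shifts, \Cref{lemma:contratos-perturbados}\ref{lemma:contratos-perturbadosi} applies with $\delta:=R_0-\E^a[u(\xi)-c(a)]$; here $\alpha(\xi)=+\infty$ because $u$ is surjective, so every $\delta$ is allowed. Thus $\xi+\delta/\alpha$ still implements $a$ and gives the Agent exactly $R_0$. For this contract, $\E^a[\xi+\delta/\alpha]=(R_0+c(a)-\beta)/\alpha$, and this value does not depend on the original choice of $\xi$.

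\emph{Step 2: reduce the Principal's objective.} Every feasible pair $(\eta,a)$, meaning $(\eta,a)\in\Sigma$ with $U(\eta,a)\ge R_0$, satisfies $\alpha\E^a[\eta]+\beta-c(a)\ge R_0$. Hence
\[
\E^a[X-\eta]\le \E^a[X]-\frac{R_0+c(a)-\beta}{\alpha},
\]
with equality for the contract constructed in Step 1. Therefore
\[
V(X,u,c,R_0)=\sup_{a\in A_I} g(a),\qquad g(a):=\E^a[X]-\frac{c(a)}{\alpha}-\frac{R_0-\beta}{\alpha}.
\]

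\emph{Step 3: attainment.} The map $a\mapsto\E^a[X]$ is upper semicontinuous by assumption. The cost $c$ is convex on $A$, so it is continuous on the interior of $A$. At points of the boundary I expect to need lower semicontinuity of $c$, which makes $-c/\alpha$ upper semicontinuous. If necessary I would argue this using differentiability of $c$ on $A$ as assumed in the appendix, which gives continuity outright. Then $g$ is upper semicontinuous on the compact set $A_I$, so it attains its maximum at some $a^\star$. Taking the contract $\xi^\star$ from Step 1 for $a^\star$ gives a feasible pair achieving the supremum in Step 2, hence $(\xi^\star,a^\star)\in S(X,u,c,R_0)$.

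\emph{Main obstacle.} The only delicate point is the regularity of $c$ at the boundary of $A_I$, together with making sure the reduction in Step 2 is an equality rather than just an inequality. The latter is handled by Step 1 and the former by the standing differentiability assumption on $c$.
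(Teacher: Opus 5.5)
Your proof is correct and follows essentially the same route as the paper's: shift an implementing contract by the constant $(R_0-\E^a[u(\xi)-c(a)])/\alpha$ so the participation constraint binds, bound every feasible payoff by $\E^a[X]-c(a)/\alpha+(\beta-R_0)/\alpha$, and maximize this upper semicontinuous function over the compact set $A_I$. Your appeal to the differentiability of $c$ to get upper semicontinuity of $-c/\alpha$ (convexity alone would not prevent an upward jump of $c$ at the right endpoint) fills in a step the paper leaves implicit.
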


\begin{proof}
For every $a\in A_I$, choose $\xi^a\in\Xi$ such that $(\xi^a,a)\in\Sigma$. Define now the transformation
\[
\tilde\xi^a := \xi^a+ \frac{R_0-\E^a[u(\xi^a)-c(a)]}{\alpha},
\]
and notice that the contract $\tilde\xi^a$ satisfies
\[
(\tilde\xi^a,a)\in\Sigma, \quad  \E^a[u(\tilde\xi^a)-c(a)]=R_0.
\]
Moreover, some simple computations lead to
\[
\E^a[X-\tilde\xi^a] = \frac{\beta-R_0}{\alpha} + \E^a[X]-\frac{c(a)}{\alpha} := J_{R_0}(a)
\]
Since $J_{R_0}$ is upper semicontinuous on the compact set $A_I$, there exists $a^\ast\in A_I$ such that
\[
J_{R_0}(a^\ast)
=
\max_{a\in A_I}J_{R_0}(a).
\]
Finally, note that $(\tilde\xi^{a^\ast},a^\ast)$ is feasible for
Principal-Agent problem at reservation utility $R_0$ and, for every feasible pair $(\xi,a)$,
\[
\E^a[X-\xi]
\leq
J_{R_0}(a)
\leq
J_{R_0}(a^\ast)
=
\E^{a^\ast}[X-\tilde\xi^{a^\ast}].
\]
Thus $(\tilde\xi^{a^\ast},a^\ast)\in S(X,u,c,R_0)$.
\end{proof}

\section{On the differentiability of the value function}
\label{sec:differentiability}
In this section we study the differentiability of the value function of the standard Principal-Agent problem. We assume that $S(X,u,c,R)\neq\emptyset$ for every $R< \bar{R}$, as it happens in the examples discussed in \Cref{sec:existence}. Moreover, for $R<\bar{R}$, we define the quantities
\[
\ell_1(R):= \inf_{(\xi,a)\in S(X,u,c,R)} \E^a\bigg[\frac{1}{u'(\xi)}\bigg], \quad \ell_2(R):= \sup_{(\xi,a)\in S(X,u,c,R)} \E^a\bigg[\frac{1}{u'(\xi)}\bigg].
\]
We start by studying the directional derivatives of the value function $V(X,u,c,R)$ with respect to the reservation utility of the Agent. We use thus the simplified notation $D^+V(R), D_+V(R)$, $D^-V(R)$ and $D_-V(R)$ for the Dini derivatives of $V$, as well as $V_+^\prime$, $V_-^\prime$ and $V^\prime$ for the directional derivatives and the derivative of $V$.

\begin{prop} \label{prop:rightderivative1}
Let $R<\bar{R}$ be such that there exists $(\xi,a)\in S(X,u,c,R)$ satisfying $\alpha(\xi)>0$ and $\mathbb E^{a}\big[1/u'(\hat\xi_{\bar\delta})\big]<+\infty$ for some $\bar\delta\in \left]0,\alpha(\xi)\right[$. Then $D_+V(R) \geq - \ell_1(R)$.
\end{prop}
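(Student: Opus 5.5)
The plan is to bound the lower right Dini derivative
\[
D_+V(R)=\liminf_{h\downarrow 0}\frac{V(X,u,c,R+h)-V(X,u,c,R)}{h}
\]
from below. I will use explicit feasible contracts at level $R+h$, obtained by shifting the utility of a solution at level $R$ upward, as in \Cref{lemma:contratos-perturbados}.

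\textbf{Step 1: one solution.} Fix any $(\eta,\beta)\in S(X,u,c,R)$ with $\alpha(\eta)>0$ and $\mathbb{E}^{\beta}[1/u'(\hat\eta_{\bar\delta})]<+\infty$ for some $\bar\delta\in\left]0,\alpha(\eta)\right[$. The pair $(\xi,a)$ from the hypothesis is one such pair. For $h\in\left]0,\bar\delta\right]$, \Cref{lemma:contratos-perturbados}\ref{lemma:contratos-perturbadosi} gives $(\hat\eta_h,\beta)\in\Sigma$ with Agent utility $R+h$, so the pair is feasible at level $R+h$. Since $(\eta,\beta)$ is optimal at level $R$, we get
\[
V(X,u,c,R+h)\ge \mathbb{E}^{\beta}[X-\hat\eta_h]=V(X,u,c,R)-\mathbb{E}^{\beta}[\hat\eta_h-\eta].
\]
\Cref{lemma:contratos-perturbados}\ref{lemma:contratos-perturbadosii} bounds the last term by $h\,\mathbb{E}^{\beta}[1/u'(\hat\eta_h)]$. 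Hence
\[
\frac{V(X,u,c,R+h)-V(X,u,c,R)}{h}\ge-\mathbb{E}^{\beta}\big[1/u'(\hat\eta_h)\big].
\]

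\textbf{Step 2: pass to the limit $h\downarrow0$.} The map $h\mapsto\hat\eta_h$ is increasing and converges to $\eta$ a.s. Since $u$ is concave and differentiable, $u'$ is continuous and non-increasing. Therefore $1/u'(\hat\eta_h)$ decreases to $1/u'(\eta)$ a.s. as $h\downarrow0$, and it is dominated by the integrable function $1/u'(\hat\eta_{\bar\delta})$. Monotone (or dominated) convergence yields $\mathbb{E}^{\beta}[1/u'(\hat\eta_h)]\to\mathbb{E}^{\beta}[1/u'(\eta)]$. Taking $\liminf$ gives
\[
D_+V(R)\ge-\mathbb{E}^{\beta}[1/u'(\eta)].
\]

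\textbf{Step 3: take the infimum over solutions.} To reach $-\ell_1(R)$, I take the infimum of $\mathbb{E}^{\beta}[1/u'(\eta)]$ over solutions. Solutions for which this expectation is $+\infty$ do not affect the infimum, unless all solutions have infinite expectation. In that case $\ell_1(R)=+\infty$ and the bound is trivial. The given $(\xi,a)$ shows the relevant set of good solutions is nonempty.

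\textbf{Main obstacle.} The difficulty is solutions $(\eta,\beta)$ whose expectation $\mathbb{E}^{\beta}[1/u'(\eta)]$ is finite but which fail the hypotheses, namely $\alpha(\eta)=0$ or non-integrability of $1/u'(\hat\eta_{\delta})$ for every $\delta>0$. For such a solution Step 1 cannot be applied directly. My first attempt would be an approximation: show that the infimum defining $\ell_1(R)$ can be restricted to solutions satisfying the hypotheses, or approximate such $\eta$ by admissible solutions with positive upper gap. If this fails, I would state the result with $\ell_1(R)$ understood as the infimum over solutions satisfying the hypotheses. This restricted infimum coincides with $\ell_1(R)$ in the finite-outcome setting of \Cref{sec:numerics}, where every contract has positive upper gap and $1/u'$ is bounded on bounded sets.
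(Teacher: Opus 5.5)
Your Steps 1 and 2 are the paper's proof. You shift the optimal pair upward by $\delta<\bar\delta$ via \Cref{lemma:contratos-perturbados}\ref{lemma:contratos-perturbadosi} and bound the extra cost with \Cref{lemma:contratos-perturbados}\ref{lemma:contratos-perturbadosii}. You then let $\delta\downarrow 0$ by monotone convergence, with $\mathbb{E}^a[1/u'(\hat\xi_{\bar\delta})]<+\infty$ as the integrable bound, to get $D_+V(R)\ge-\mathbb{E}^a[1/u'(\xi)]$.

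The paper closes with one line, ``taking supremum over $(\xi,a)\in S(X,u,c,R)$''. That is exactly the step you singled out as the main obstacle, and nothing before it justifies it. The one-pair bound was derived only for pairs with $\alpha(\xi)>0$ and the integrability condition, i.e.\ for pairs in $\tilde S(X,u,c,R)$. So what your argument and the paper's actually establish is $D_+V(R)\ge-\tilde\ell_1(R)$. This is weaker than the stated bound whenever $\tilde\ell_1(R)>\ell_1(R)$.

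Your explanation of why the other solutions resist the argument is right, and the difficulty is specific to upward shifts. The downward shifts $\hat\xi_{-\delta}$ used in \Cref{prop:leftderivative2} exist for every solution and satisfy $1/u'(\hat\xi_{-\delta})\le 1/u'(\xi)$. By contrast, if $\alpha(\eta)=0$ there is no uniform upward utility shift. A non-uniform shift changes $\mathbb{E}^b[u(\cdot)]$ differently across actions $b$ and may destroy incentive compatibility.

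The paper gives no approximation argument to bridge this either. Your fallback, with the infimum restricted to $\tilde S(X,u,c,R)$, is therefore the accurate form of what is proved. It agrees with the stated bound whenever $\tilde\ell_1(R)=\ell_1(R)$, for instance when $X$ has finite support as in \Cref{sec:numerics}. The paper itself distinguishes $\ell_1$ from $\tilde\ell_1$ just before \Cref{prop:differentibility-iff}, but does not apply that distinction to this proposition.
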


\begin{proof}
Let $R$ and $(\xi,a)$ be as in the statement and let $\delta\in\left]0,\bar\delta\right[$. By Lemma~\ref{lemma:contratos-perturbados}\ref{lemma:contratos-perturbadosi}
\[
\mathbb{E}^a[X-\hat\xi_\delta] \leq V(X,u,c,R+\delta).
\]
Therefore, using Lemma~\ref{lemma:contratos-perturbados}\ref{lemma:contratos-perturbadosii}, we obtain
$$
-\mathbb{E}^a\left[ \frac{1}{u'(\hat\xi_\delta)} \right]
\leq -\mathbb{E}^a\left[ \frac{\hat\xi_\delta-\xi}{\delta} \right]
\leq \frac{V(X,u,c,R+\delta)-V(X,u,c,R)}{\delta}.
$$
Note that, by monotonicity of $u'$, we have $\mathbb E^{a}\big[1/u'(\hat\xi_{\delta})\big]< \mathbb E^{a}\big[1/u'(\hat\xi_{\bar\delta})\big]<+\infty$. Then, taking the liminf as $\delta\to 0^+$, by monotone convergence we have
\[
 -\mathbb{E}^a\left[\frac{1}{u'(\xi)}\right] \leq D_+V(R).
\]
Taking supremum over $(\xi,a)\in S(X,u,c,R)$, we conclude $-\ell_1(R) \leq  D_+V(R)$.
\end{proof}

\begin{prop} \label{prop:rightderivative2}
Let  $R<\bar{R}$ be such that there exists $\hat\delta>0$ and $(\eta,\beta)\in S(X,u,c,R)$ satisfying
\[
(\forall \delta \in\,]0,\hat\delta\,[) (\exists~ (\eta_\delta,\beta_\delta)\in S(X,u,c,R+\delta) ) \quad\limsup_{\delta\to 0^+} \E^{\beta_\delta}\bigg[ \frac{1}{u'(\tilde\eta_{-\delta})}\bigg]=\E^\beta\bigg[ \frac{1}{u'(\eta)}\bigg]<+\infty,
\]
where we denote $\tilde\eta_{-\delta} = u^{-1}(u(\eta_\delta)-\delta)$. Then $D^+V(R) \leq - \ell_1(R)$.
\end{prop}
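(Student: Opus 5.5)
The plan is to bound the difference quotient $\frac{V(X,u,c,R+\delta)-V(X,u,c,R)}{\delta}$ from above by pulling the optimal pair at level $R+\delta$ back down to level $R$. This is the mirror image of the proof of \Cref{prop:rightderivative1}.

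First, fix $\delta\in\left]0,\hat\delta\right[$ and take the pair $(\eta_\delta,\beta_\delta)\in S(X,u,c,R+\delta)$ given by the hypothesis. By \Cref{lema:matchingR0} we have $\E^{\beta_\delta}[u(\eta_\delta)-c(\beta_\delta)]=R+\delta$. Perturbing downward by $-\delta$ is always allowed, since $-\delta<0<\alpha(\eta_\delta)$ (upper gaps are nonnegative). So \Cref{lemma:contratos-perturbados}\ref{lemma:contratos-perturbadosi} shows that $(\tilde\eta_{-\delta},\beta_\delta)\in\Sigma$ and that it yields utility exactly $R$. It is therefore feasible at level $R$, which gives $\E^{\beta_\delta}[X-\tilde\eta_{-\delta}]\le V(X,u,c,R)$. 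Combining this with $V(X,u,c,R+\delta)=\E^{\beta_\delta}[X-\eta_\delta]$, we get
\[
V(X,u,c,R+\delta)-V(X,u,c,R)\le -\E^{\beta_\delta}[\eta_\delta-\tilde\eta_{-\delta}].
\]

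Next, apply \Cref{lemma:contratos-perturbados}\ref{lemma:contratos-perturbadosii} to the contract $\tilde\eta_{-\delta}$ with perturbation $+\delta$; note that $\eta_\delta=u^{-1}(u(\tilde\eta_{-\delta})+\delta)$, which is legitimate because $\alpha(\tilde\eta_{-\delta})=\alpha(\eta_\delta)+\delta>\delta$. The lower inequality of that lemma gives $\eta_\delta-\tilde\eta_{-\delta}\ge \delta/u'(\tilde\eta_{-\delta})$ almost surely. Dividing by $\delta$ yields
\[
\frac{V(X,u,c,R+\delta)-V(X,u,c,R)}{\delta}\le -\E^{\beta_\delta}\Big[\frac{1}{u'(\tilde\eta_{-\delta})}\Big].
\]
Taking $\limsup_{\delta\to0^+}$ and using the hypothesis, I would like to conclude $D^+V(R)\le -\E^\beta[1/u'(\eta)]$.

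The main obstacle is the direction of the limits. From the displayed bound, $\limsup$ of the left side is at most $-\liminf_{\delta\to0^+}\E^{\beta_\delta}[1/u'(\tilde\eta_{-\delta})]$, whereas the hypothesis is phrased with a $\limsup$. I would read the hypothesis as asserting that the limit exists, or else pass to a subsequence realizing the $\liminf$ and argue it equals the stated value. Either way this gives $D^+V(R)\le-\E^\beta[1/u'(\eta)]$. Finally, since $(\eta,\beta)\in S(X,u,c,R)$, we have $\E^\beta[1/u'(\eta)]\ge \ell_1(R)$, so $-\E^\beta[1/u'(\eta)]\le -\ell_1(R)$ and the bound $D^+V(R)\le-\ell_1(R)$ follows.
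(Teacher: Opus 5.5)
Your argument is the paper's proof. You pull $(\eta_\delta,\beta_\delta)$ back by $-\delta$, use \Cref{lemma:contratos-perturbados}\ref{lemma:contratos-perturbadosi} for feasibility at level $R$, bound the difference quotient by $-\mathbb{E}^{\beta_\delta}[1/u'(\tilde\eta_{-\delta})]$ via part \ref{lemma:contratos-perturbadosii}, and pass to the limit.

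Your concern about the direction of the limits is well founded. The paper's proof takes $\limsup$ on both sides, which tacitly uses $\limsup_{\delta\to0^+}(-a_\delta)=-\limsup_{\delta\to0^+}a_\delta$. So the hypothesis has to be read as a genuine limit, as it is written in the remark after the proposition. It is also enough to have $\liminf_{\delta\to0^+}\mathbb{E}^{\beta_\delta}[1/u'(\tilde\eta_{-\delta})]\ge\ell_1(R)$. Commit to one of these readings, because your subsequence fallback does not work. A bound on the difference quotients along one sequence $\delta_n\to0^+$ does not control $D^+V(R)$, which is a $\limsup$ over all $\delta$. And a $\limsup$ hypothesis gives no information about the $\liminf$.

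There is also a small slip. The claim $\alpha(\tilde\eta_{-\delta})=\alpha(\eta_\delta)+\delta>\delta$ needs $\alpha(\eta_\delta)>0$, which is not assumed; upper gaps are only nonnegative. It is cleaner to apply the upper inequality of part \ref{lemma:contratos-perturbadosii} directly to $\eta_\delta$ with perturbation $-\delta<0\le\alpha(\eta_\delta)$. This gives $\tilde\eta_{-\delta}-\eta_\delta\le-\delta/u'(\tilde\eta_{-\delta})$, which is exactly the step the paper uses.
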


\begin{proof}
Note that, by \Cref{lemma:contratos-perturbados}\ref{lemma:contratos-perturbadosi}, we have $\mathbb{E}^{\beta_\delta}[X-\tilde\eta_{-\delta}] \leq V(X,u,c,R)$. Therefore, using \Cref{lemma:contratos-perturbados}\ref{lemma:contratos-perturbadosii}, we have
$$
\frac{V(X,u,c,R+\delta)-V(X,u,c,R)}{\delta}
\leq \mathbb{E}^{\beta_\delta}\left[ \frac{\tilde\eta_{-\delta}-\eta_\delta}{\delta} \right]
\leq -\mathbb{E}^{\beta_\delta}\left[ \frac{1}{u'(\tilde\eta_{-\delta})} \right].
$$
Taking the limsup as $\delta\to 0^+$, we have
$$
D^+V(R) \leq -\mathbb{E}^\beta\left[\frac{1}{u'(\eta)}\right] \leq -\ell_1(R). \qedhere
$$  
\end{proof}

\begin{rem}
We discuss below the conditions under which the technical assumptions in \Cref{prop:rightderivative2} are formally satisfied.
\begin{enumerate}
\item If the solution mapping $R \mapsto S(X, u, c, R)$ is upper hemicontinuous and takes compact values, then any sequence $(\eta_\delta, \beta_\delta) \in S(X, u, c, R+\delta)$ admits a convergent subsequence in $S(X, u, c, R)$, which is the first part of the assumption.

\item If the feasible mapping $R \mapsto F(R)$ is continuous and takes compact values, then Berge's maximum theorem guarantees that the solution mapping is upper hemicontinuous and takes compact values. Note that in both examples in \Cref{sec:existence} we proved the compactness of $F(R)$. Regarding the upper hemicontinuity, it holds whenever $\Xi$ is a compact metric space, as in Example \ref{sec:example-compact}. It also holds in Example \ref{sec:example-finite-support} due to closed graph property and finite dimensionality. Regarding the lower hemicontinuity, it holds if every $(\xi,a)\in F(R)$ satisfies $\alpha(\xi)>0$, since we can approximate such contracts via the perturbed contracts introduced in \Cref{sec:appendix-properties}. The upper-gap condition is discussed in \Cref{rem:when-assump-H-holds}.

\item Finally, the convergence
\[
\lim_{\delta\to 0^+} \E^{\beta_\delta}\bigg[ \frac{1}{u'(\tilde\eta_{-\delta})}\bigg]=\E^\beta\bigg[ \frac{1}{u'(\eta)}\bigg],
\]
holds in Example \ref{sec:example-finite-support} due to the continuity of the mappings $p_j$ and $u'$, and it also holds in Example \ref{sec:example-compact} if the density of $X$ is continuous in $L^1$ with respect to $a$ and if the integrand $\frac{1}{u'(\tilde\eta_{-\delta})}$ is uniformly bounded.
\end{enumerate}
\end{rem}

\begin{cor}\label{cor:right-derivative}
Let $R<\bar{R}$ satisfy the assumptions of \Cref{prop:rightderivative1} and \Cref{prop:rightderivative2}. Then the right-sided derivative of $V$ at $R$ exists and satisfies
\[
V'_+(R) = D^+V(R) = D_+V(R) = -\ell_1(R).
\]
\end{cor}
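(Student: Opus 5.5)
The corollary follows by combining the two one-sided Dini estimates just established at the same point $R$. The only structural fact needed is that, for any real function, the lower right Dini derivative never exceeds the upper one. By definition,
\[
D_+V(R)=\liminf_{\delta\to0^+}\frac{V(X,u,c,R+\delta)-V(X,u,c,R)}{\delta}
\quad\text{and}\quad
D^+V(R)=\limsup_{\delta\to0^+}\frac{V(X,u,c,R+\delta)-V(X,u,c,R)}{\delta},
\]
so $D_+V(R)\le D^+V(R)$ always holds.

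The argument then runs in three steps.

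First, the hypotheses of \Cref{prop:rightderivative1} hold at $R$, so $D_+V(R)\ge -\ell_1(R)$.

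Second, the hypotheses of \Cref{prop:rightderivative2} hold at $R$, so $D^+V(R)\le -\ell_1(R)$.

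Third, chaining these with the general inequality gives
\[
-\ell_1(R)\le D_+V(R)\le D^+V(R)\le -\ell_1(R).
\]
Hence all these quantities coincide.

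One point needs checking: that $-\ell_1(R)$ is a finite real number, so the limit is a genuine one-sided derivative and not $\pm\infty$. In \Cref{prop:rightderivative2} there is a pair $(\eta,\beta)\in S(X,u,c,R)$ with $\E^\beta[1/u'(\eta)]<+\infty$. Since $u'>0$, every quantity $\E^a[1/u'(\xi)]$ is nonnegative. Therefore
\[
0\le\ell_1(R)\le \E^\beta[1/u'(\eta)]<+\infty .
\]

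Because the liminf and limsup of the right difference quotient agree and equal this finite number, the right limit of the difference quotient exists. This is precisely $V'_+(R)$, and we conclude $V'_+(R)=D^+V(R)=D_+V(R)=-\ell_1(R)$.

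I expect no real obstacle, since the work was done in the two propositions. The only care needed is the finiteness remark above and noting that $S(X,u,c,R)\neq\emptyset$ for $R<\bar R$, so that $\ell_1(R)$ is well defined as an infimum over a nonempty set.
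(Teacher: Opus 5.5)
Your proposal is correct and is exactly the argument the paper intends: the corollary comes with no separate proof and is meant as the immediate sandwich $-\ell_1(R)\le D_+V(R)\le D^+V(R)\le -\ell_1(R)$ obtained from \Cref{prop:rightderivative1} and \Cref{prop:rightderivative2}. Your finiteness check $0\le \ell_1(R)\le \E^\beta[1/u'(\eta)]<+\infty$ is a detail the paper leaves implicit, and it ensures that the common value is a genuine finite right derivative.
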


We move now to the left derivatives of $V$. Analogously to \Cref{prop:rightderivative1} and \Cref{prop:rightderivative2}, we can prove the following results. 

\begin{prop} \label{prop:leftderivative1}
Let $R<\bar{R}$ be such that there exists $\hat\delta>0$ and $(\eta,\beta)\in S(X,u,c,R)$ satisfying
\[
(\forall \delta \in \,]0,\hat\delta\,[) (\exists~ (\eta_{-\delta},\beta_{-\delta})\in S(X,u,c,R-\delta) ) \quad \lim_{\delta\to 0^+} \E^{\beta_{-\delta}}\bigg[ \frac{1}{u'(\tilde\eta_\delta)}\bigg]=\E^\beta\bigg[ \frac{1}{u'(\eta)}\bigg]<+\infty,
\]
where $\tilde\eta_\delta = u^{-1}(u(\eta_{-\delta})+\delta)$. Then $D_-V(R) \geq - \ell_2(R)$.
\end{prop}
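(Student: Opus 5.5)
This is the mirror image of \Cref{prop:rightderivative2}: we bound the left difference quotient from below using solutions at $R-\delta$ pushed up to level $R$. Fix $\delta\in\left]0,\hat\delta\right[$ and take $(\eta_{-\delta},\beta_{-\delta})\in S(X,u,c,R-\delta)$ as in the hypothesis. Form the perturbed contract $\tilde\eta_\delta=u^{-1}(u(\eta_{-\delta})+\delta)$; implicitly we need $\delta<\alpha(\eta_{-\delta})$ for this to be well defined, which is part of what the hypothesis asserts.

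By \Cref{lemma:contratos-perturbados}\ref{lemma:contratos-perturbadosi}, the pair $(\tilde\eta_\delta,\beta_{-\delta})$ lies in $\Sigma$. By \Cref{lema:matchingR0}, it gives the Agent utility exactly $(R-\delta)+\delta=R$. Hence it is feasible at level $R$, and
\[
\mathbb{E}^{\beta_{-\delta}}[X-\tilde\eta_\delta]\le V(X,u,c,R).
\]
Since $V(X,u,c,R-\delta)=\mathbb{E}^{\beta_{-\delta}}[X-\eta_{-\delta}]$, subtracting gives
\[
V(X,u,c,R-\delta)-V(X,u,c,R)\le \mathbb{E}^{\beta_{-\delta}}[\tilde\eta_\delta-\eta_{-\delta}].
\]
By \Cref{lemma:contratos-perturbados}\ref{lemma:contratos-perturbadosii}, $\tilde\eta_\delta-\eta_{-\delta}\le \delta/u'(\tilde\eta_\delta)$ almost surely. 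Dividing by $\delta$ and rearranging yields
\[
\frac{V(X,u,c,R)-V(X,u,c,R-\delta)}{\delta}\ge -\mathbb{E}^{\beta_{-\delta}}\Big[\frac{1}{u'(\tilde\eta_\delta)}\Big].
\]

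Now take $\liminf_{\delta\to0^+}$. The left-hand side becomes $D_-V(R)$. By the assumed limit, the right-hand side tends to $-\mathbb{E}^\beta[1/u'(\eta)]$. Since $(\eta,\beta)\in S(X,u,c,R)$, we have $\mathbb{E}^\beta[1/u'(\eta)]\le\ell_2(R)$ by the definition of $\ell_2$. Therefore $D_-V(R)\ge-\mathbb{E}^\beta[1/u'(\eta)]\ge-\ell_2(R)$.

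The argument is short, and the only delicate point is the well-definedness of $\tilde\eta_\delta$, i.e.\ that $\delta$ stays below the upper gap $\alpha(\eta_{-\delta})$. It is implicit in the hypothesis through the definition of $\tilde\eta_\delta$, and it holds automatically whenever $u$ is surjective or the contracts are bounded, as in \Cref{rem:when-assump-H-holds}. I would state this explicitly rather than rely on it silently. The rest is only sign bookkeeping in the left difference quotient, making sure the limit is taken along the liminf that defines $D_-V(R)$.
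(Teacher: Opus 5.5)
Your proposal is correct and follows the paper's proof essentially line for line. You push the solution at $R-\delta$ up by $\delta$ via \Cref{lemma:contratos-perturbados}\ref{lemma:contratos-perturbadosi}, compare with $V(X,u,c,R)$, bound the contract increment with part~\ref{lemma:contratos-perturbadosii}, and pass to the $\liminf$ using the assumed limit and the definition of $\ell_2(R)$. Your appeal to \Cref{lema:matchingR0} is harmless but unnecessary, since utility $\ge R$ already gives feasibility. Your remark that $\delta<\alpha(\eta_{-\delta})$ is needed for $\tilde\eta_\delta$ to be well defined is fair; the paper also leaves this implicit.
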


\begin{proof}
Let $\delta \in (0,\hat\delta)$. By Lemma~\ref{lemma:contratos-perturbados}\ref{lemma:contratos-perturbadosi}, we have $\mathbb{E}^{\beta_{-\delta}}[X-\tilde\eta_\delta] \leq V(X,u,c,R)$. Therefore, using Lemma~\ref{lemma:contratos-perturbados}\ref{lemma:contratos-perturbadosii},
$$
\frac{V(X,u,c,R)-V(X,u,c,R-\delta)}{\delta} 
\geq \mathbb{E}^{\beta_{-\delta}}\left[ \frac{\eta_{-\delta}-\tilde\eta_\delta}{\delta} \right] 
\geq -\mathbb{E}^{\beta_{-\delta}}\left[ \frac{1}{u'(\tilde\eta_\delta)} \right].
$$
Taking the liminf as $\delta\to 0^+$, we obtain
$$
D_-V(R) \geq -\mathbb{E}^\beta\left[\frac{1}{u'(\eta)}\right] \geq -\ell_2(R). \qedhere
$$
\end{proof}

\begin{prop} \label{prop:leftderivative2}
Let $R<\bar{R}$ be such that some $(\xi,a)\in S(X,u,c,R)$ satisfies $\alpha(\xi)>0$ and $\mathbb E^{a}\big[1/u'(\xi)\big]<+\infty$. Then $D^-V(R) \leq - \ell_2(R)$.
\end{prop}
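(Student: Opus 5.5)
We need to show $D^-V(R)\le -\ell_2(R)$, i.e. for every $(\eta,\beta)\in S(X,u,c,R)$,
\[
\limsup_{\delta\to0^+}\frac{V(X,u,c,R)-V(X,u,c,R-\delta)}{\delta}\le -\mathbb{E}^{\beta}\big[1/u'(\eta)\big],
\]
and then take the infimum over solutions, which gives $-\sup=-\ell_2(R)$. The plan mirrors \Cref{prop:rightderivative1}, with the perturbation now pointing downward.

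Fix an arbitrary $(\eta,\beta)\in S(X,u,c,R)$. For $\delta>0$, consider $\hat\eta_{-\delta}=u^{-1}(u(\eta)-\delta)$. This is always well defined, since $-\delta<0<\alpha(\eta)$ whenever the upper gap is positive; that is where the hypothesis $\alpha(\xi)>0$ enters. By \Cref{lemma:contratos-perturbados}\ref{lemma:contratos-perturbadosi}, $(\hat\eta_{-\delta},\beta)\in\Sigma$ with Agent utility $R-\delta$, so it is feasible at level $R-\delta$. Hence
\[
V(X,u,c,R-\delta)\ge \mathbb{E}^\beta[X-\hat\eta_{-\delta}].
\]
Since \Cref{lema:matchingR0} gives $V(X,u,c,R)=\mathbb{E}^\beta[X-\eta]$, we obtain
\[
V(R)-V(R-\delta)\le \mathbb{E}^\beta[\hat\eta_{-\delta}-\eta].
\]
Next apply \Cref{lemma:contratos-perturbados}\ref{lemma:contratos-perturbadosii} with increment $-\delta$:
\[
\frac{-\delta}{u'(\eta)}\le \hat\eta_{-\delta}-\eta\le \frac{-\delta}{u'(\hat\eta_{-\delta})}.
\]
The upper bound yields $\frac{V(R)-V(R-\delta)}{\delta}\le -\mathbb{E}^\beta\big[1/u'(\hat\eta_{-\delta})\big]$.

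The remaining step is the limit. As $\delta\downarrow0$, $\hat\eta_{-\delta}\uparrow\eta$ almost surely, and by concavity $u'(\hat\eta_{-\delta})\ge u'(\eta)$, so $1/u'(\hat\eta_{-\delta})$ increases to $1/u'(\eta)$. This uses continuity of $u'$, which holds for a differentiable concave function. By monotone convergence, $\mathbb{E}^\beta\big[1/u'(\hat\eta_{-\delta})\big]\to \mathbb{E}^\beta\big[1/u'(\eta)\big]$, and this limit is finite when $(\eta,\beta)$ is the solution in the hypothesis. Taking $\limsup$ gives $D^-V(R)\le -\mathbb{E}^\beta\big[1/u'(\eta)\big]$.

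\textbf{Main obstacle.} The difficulty is that the hypothesis concerns only \emph{some} solution, whereas $\ell_2$ is a supremum over all solutions. For solutions with $\mathbb{E}^\beta[1/u'(\eta)]=+\infty$, monotone convergence still gives a limit of $+\infty$, so the bound $D^-V\le -\infty$ would follow from the same argument. That would make $V$ have an infinite left Dini derivative, and I would check whether this actually contradicts finiteness using the given solution. The cleaner route is to note that the argument above uses only $(\eta,\beta)\in S$, never the upper gap, because downward perturbations are always admissible. Therefore the argument applies to every solution: it yields $D^-V(R)\le -\mathbb{E}^\beta\big[1/u'(\eta)\big]$ for each $(\eta,\beta)\in S(X,u,c,R)$. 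Taking the infimum of the right-hand side over solutions gives $-\ell_2(R)$, with the hypothesis serving only to guarantee that the bound is not vacuous.
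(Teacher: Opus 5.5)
Your argument is correct and is essentially the paper's proof. Like the paper, you fix an arbitrary solution, use the downward perturbation $\hat\eta_{-\delta}$ (feasible at level $R-\delta$ by \Cref{lemma:contratos-perturbados}\ref{lemma:contratos-perturbadosi}) with the upper bound in part \ref{lemma:contratos-perturbadosii}, pass to the limit by monotone convergence, and take the infimum over $S(X,u,c,R)$. Two small corrections: $\alpha(\eta)\ge 0$ always holds, so your earlier remark that this is ``where the hypothesis $\alpha(\xi)>0$ enters'' should be dropped, and $V(X,u,c,R)=\mathbb{E}^\beta[X-\eta]$ follows from the definition of a solution rather than from \Cref{lema:matchingR0}.
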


\begin{proof}
Let $(\xi,a)\in S(X,u,c,R)$ and $\delta>0$. By Lemma~\ref{lemma:contratos-perturbados}\ref{lemma:contratos-perturbadosi}, $\mathbb{E}^a[X-\hat\xi_{-\delta}] \leq V(X,u,c,R-\delta)$. Using Lemma~\ref{lemma:contratos-perturbados}\ref{lemma:contratos-perturbadosii}, we obtain
$$
\frac{V(X,u,c,R)-V(X,u,c,R-\delta)}{\delta} 
\leq \mathbb{E}^a\left[ \frac{\hat\xi_{-\delta}-\xi}{\delta} \right] 
\leq -\mathbb{E}^a\left[ \frac{1}{u'(\hat\xi_{-\delta})} \right].
$$
Taking the limsup as $\delta\to 0^+$, by the Monotone Convergence Theorem, we have
$$
D^-V(R) \leq -\mathbb{E}^a\left[\frac{1}{u'(\xi)}\right].
$$
Finally, taking the infimum over $(\xi,a)\in S(X,u,c,R)$ yields $D^-V(R) \leq - \ell_2(R)$.
\end{proof}

\begin{cor} \label{cor:left-derivative}
Let $R<\bar{R}$ satisfy the assumptions of Proposition~\ref{prop:leftderivative1} and Proposition~\ref{prop:leftderivative2}. Then the left-sided derivative of $V$ at $R$ exists and satisfies
\[
V'_-(R) = D^-V(R) = D_-V(R) = -\ell_2(R).
\]
\end{cor}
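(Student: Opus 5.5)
Corollary~\ref{cor:left-derivative} follows by squeezing the Dini derivatives, exactly as Corollary~\ref{cor:right-derivative} does on the right.

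The left Dini derivatives always satisfy $D_-V(R)\le D^-V(R)$, by definition of $\liminf$ and $\limsup$ of the backward difference quotients $\frac{V(X,u,c,R)-V(X,u,c,R-\delta)}{\delta}$ as $\delta\to0^+$. Proposition~\ref{prop:leftderivative1} gives the lower bound $D_-V(R)\ge-\ell_2(R)$, and Proposition~\ref{prop:leftderivative2} gives the upper bound $D^-V(R)\le-\ell_2(R)$. Both propositions apply because $R$ satisfies their hypotheses by assumption. Chaining these yields
\[
-\ell_2(R)\le D_-V(R)\le D^-V(R)\le -\ell_2(R),
\]
so all the inequalities are equalities.

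Since $\liminf$ and $\limsup$ of the backward quotient coincide, the limit exists, and it is by definition $V'_-(R)$. Hence $V'_-(R)=D^-V(R)=D_-V(R)=-\ell_2(R)$.

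This common value is finite. Under the hypotheses of Proposition~\ref{prop:leftderivative2}, $\ell_2(R)$ is bounded above by a quantity that is finite, and it is nonnegative because $1/u'>0$. Boundedness also follows from the squeeze itself.

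The only point needing care is consistency in the two propositions between $\ell_2$ and the particular expectation $\E^\beta[1/u'(\eta)]$ that actually appears. In Proposition~\ref{prop:leftderivative1}, the bound $-\E^\beta[1/u'(\eta)]\ge-\ell_2(R)$ uses the definition of $\ell_2$ as a supremum. In Proposition~\ref{prop:leftderivative2}, the final step should be phrased as an infimum over the solutions that satisfy the stated hypothesis. Accepting both propositions as stated, the corollary is then immediate, and I expect no real obstacle.
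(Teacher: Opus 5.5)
Your squeeze $-\ell_2(R)\le D_-V(R)\le D^-V(R)\le-\ell_2(R)$ is correct. It is exactly what the paper intends: the corollary is stated without a separate proof, as the left-hand counterpart of Corollary~\ref{cor:right-derivative}, obtained by combining Propositions~\ref{prop:leftderivative1} and~\ref{prop:leftderivative2}.

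Two of your side remarks are wrong, however.

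First, the finiteness argument. The hypothesis of Proposition~\ref{prop:leftderivative2} concerns a single solution, so it bounds the supremum $\ell_2(R)$ from below, not from above. The chain alone is also consistent with $\ell_2(R)=+\infty$, in which case every term equals $-\infty$. Finiteness actually comes from the sharper bound $D_-V(R)\ge-\E^\beta[1/u'(\eta)]>-\infty$ established in the proof of Proposition~\ref{prop:leftderivative1}. Combined with $D^-V(R)\le-\ell_2(R)$, this forces $\ell_2(R)=\E^\beta[1/u'(\eta)]<+\infty$.

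Second, your suggested rephrasing of Proposition~\ref{prop:leftderivative2}. Restricting the infimum to solutions satisfying its hypothesis would weaken its conclusion, and then the squeeze would no longer close. No restriction is needed. The downward perturbation $\hat\xi_{-\delta}$ is defined for every contract, since $-\delta<0\le\alpha(\xi)$. Monotone convergence also applies when $\E^a[1/u'(\xi)]=+\infty$. Hence $D^-V(R)\le-\E^a[1/u'(\xi)]$ holds for every $(\xi,a)\in S(X,u,c,R)$.
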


We conclude with the main result of this section, which characterizes the differentiability of the value function. In order to present it, we define the set $\tilde S(X,u,c,R)$ as the set of elements $(\xi,a)\in S(X,u,c,R)$ such that $\alpha(\xi)>0$ and $\mathbb E^{a}\big[1/u'(\hat\xi_{\bar\delta})\big]<+\infty$ for some $\bar\delta\in \,]0,\alpha(\xi)\,[$. We define finally, for $R<\bar{R}$, the quantities
\[
\tilde\ell_1(R):= \inf_{(\xi,a)\in \tilde S(X,u,c,R)} \E^a\bigg[\frac{1}{u'(\xi)}\bigg], \quad \tilde\ell_2(R):= \sup_{(\xi,a)\in \tilde S(X,u,c,R)} \E^a\bigg[\frac{1}{u'(\xi)}\bigg].
\]
It is clear that $\tilde\ell_1(R)\geq \ell_1(R)$ and $\tilde\ell_2(R)\leq \ell_2(R)$ for every $R$. If the function $u$ is surjective or if $X$ has finite support then we have $\tilde\ell_1(R)=\ell_1(R)$ and $\tilde\ell_2(R) =  \ell_2(R)$, see \Cref{rem:when-assump-H-holds}. In those cases, and any others where the two equalities hold, the next result characterizes the differentiability of the value function $V$.

\begin{prop} \label{prop:differentibility-iff}
Let $R<\bar{R}$ be such that the assumptions of Propositions \ref{prop:rightderivative1}, \ref{prop:rightderivative2} and \ref{prop:leftderivative1} are satisfied. If $\ell_1(R)=\ell_2(R)$ then the value function $V$ is differentiable at $R$. Conversely, if $V$ is differentiable at $R$ then $\tilde\ell_1(R)=\tilde\ell_2(R)$.
\end{prop}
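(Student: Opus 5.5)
The plan is to assemble the one-sided derivative results \Cref{cor:right-derivative} and \Cref{cor:left-derivative} for the direct implication. For the converse, I would use the \emph{per-element} inequalities that appear inside the proofs of \Cref{prop:rightderivative1} and \Cref{prop:leftderivative2}, before the supremum/infimum over $S(X,u,c,R)$ is taken.

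\emph{Direct implication.} Under the assumptions of \Cref{prop:rightderivative1} and \Cref{prop:rightderivative2}, \Cref{cor:right-derivative} gives that $V'_+(R)$ exists and equals $-\ell_1(R)$. For the left side, \Cref{prop:leftderivative1} is assumed. I first check that the hypothesis of \Cref{prop:leftderivative2} follows from that of \Cref{prop:rightderivative1}. Take $(\xi,a)\in S(X,u,c,R)$ with $\alpha(\xi)>0$ and $\mathbb E^a[1/u'(\hat\xi_{\bar\delta})]<+\infty$. Since $\hat\xi_{\bar\delta}\ge\xi$ and $u'$ is non-increasing (concavity), we get $1/u'(\xi)\le 1/u'(\hat\xi_{\bar\delta})$ almost surely, so $\mathbb E^a[1/u'(\xi)]<+\infty$. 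Hence \Cref{cor:left-derivative} applies, and $V'_-(R)=-\ell_2(R)$. Both quantities are finite, since $\ell_1(R)\le\mathbb E^a[1/u'(\xi)]<\infty$ and $\ell_2(R)$ is bounded through the limit in \Cref{prop:leftderivative1}. If $\ell_1(R)=\ell_2(R)$, the two one-sided derivatives coincide, so $V$ is differentiable at $R$ with $V'(R)=-\ell_1(R)$.

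\emph{Converse.} Assume $V'(R)$ exists. Fix an arbitrary $(\xi,a)\in\tilde S(X,u,c,R)$; this set is nonempty by the hypothesis of \Cref{prop:rightderivative1}. The proof of \Cref{prop:rightderivative1}, run for this fixed pair, gives $D_+V(R)\ge-\mathbb E^a[1/u'(\xi)]$. Likewise, the proof of \Cref{prop:leftderivative2}, run for the same pair, gives $D^-V(R)\le-\mathbb E^a[1/u'(\xi)]$. Its integrability requirement holds by the monotonicity remark above, and $\alpha(\xi)>0$ holds because the pair lies in $\tilde S$. By differentiability, $D_+V(R)=D^-V(R)=V'(R)$, so
\[
\mathbb E^a\big[1/u'(\xi)\big]=-V'(R)\quad\text{for every }(\xi,a)\in\tilde S(X,u,c,R).
\]
The map $(\xi,a)\mapsto\mathbb E^a[1/u'(\xi)]$ is therefore constant on $\tilde S(X,u,c,R)$, and its infimum and supremum agree, that is, $\tilde\ell_1(R)=\tilde\ell_2(R)$.

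\emph{Main obstacle.} The main point of care is the converse: the stated propositions only record the aggregated bounds $-\ell_1$ and $-\ell_2$, whereas here I need the pointwise bounds for each pair. I would re-verify the monotone convergence steps for a single pair. For the right side, the family $1/u'(\hat\xi_\delta)$ decreases to $1/u'(\xi)$ as $\delta\downarrow0$ and is dominated by $1/u'(\hat\xi_{\bar\delta})$. For the left side, $1/u'(\hat\xi_{-\delta})$ increases to $1/u'(\xi)$ as $\delta\downarrow0$. This also explains why the converse only controls $\tilde S$ rather than all of $S$: elements of $S$ without a positive upper gap cannot be perturbed upward, so no right-hand bound is available for them.
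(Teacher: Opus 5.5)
Your proposal is correct and follows the paper's proof essentially step for step. The direct implication combines \Cref{cor:right-derivative} and \Cref{cor:left-derivative}, after noting that the hypothesis of \Cref{prop:rightderivative1} implies that of \Cref{prop:leftderivative2}; the converse uses, for each fixed $(\xi,a)\in\tilde S(X,u,c,R)$, the per-pair bound $D^-V(R)\le-\mathbb{E}^a[1/u'(\xi)]\le D_+V(R)$ taken from the proofs of those two propositions. One harmless slip: the limit condition in \Cref{prop:leftderivative1} only gives the lower bound $\ell_2(R)\ge\mathbb{E}^\beta[1/u'(\eta)]$, not finiteness of $\ell_2(R)$, but in the direct part finiteness is automatic since $\ell_2(R)=\ell_1(R)<+\infty$.
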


\begin{proof} 
Note that the assumption in \Cref{prop:rightderivative1} is stronger than the one in \Cref{prop:leftderivative2}. If $\ell_1(R)=\ell_2(R)$, it follows from Corollaries \ref{cor:right-derivative} and \ref{cor:left-derivative} that $V$ is differentiable at $R$. 

Conversely, let $(\xi,a)\in S(X,u,c,R)$ be such that $\alpha(\xi)>0$ and $\mathbb E^{a}\big[1/u'(\hat\xi_{\bar\delta})\big]<\infty$ for some $\bar\delta\in (0,\alpha(\xi))$. It follows from \Cref{prop:rightderivative1} and \Cref{prop:leftderivative2}
\begin{equation*}\label{e:dini-V}
D^-V(R) 
\;\le\; -\,\E^{a}\!\left[\frac{1}{u'(\xi)}\right]
\;\le\; D_+V(R). 
\end{equation*}
Therefore, if $V$ is differentiable at $R$, then for every $(\xi,a)\in \tilde S(X,u,c,R)$ it holds 
\begin{equation} \label{e:derivada-V}
V'(R)=-\,\E^{a}\!\left[\frac{1}{u'(\xi)}\right],
\end{equation}
which implies $\tilde\ell_1(R)=\tilde\ell_2(R)$.
\end{proof}

\begin{rem}\label{rem:derivada-V}
\begin{enumerate}[label=(\roman*)]
\item As a by-product, if $V$ is differentiable at $R$, then every $(\xi,a)\in \tilde S(X,u,c,R)$ shares the same value of $\E^{a}[1/u'(\xi)]$.
\item In the affine case $u(x)=\alpha x+\beta$, \eqref{e:derivada-V} reduces to $V'\equiv-1/\alpha$,
which is \Cref{lemma:linear-case-values}.
\item Formula \eqref{e:derivada-V} is consistent with the classical characterization of the Lagrange multiplier of the
participation constraint in \cite{holmstrom1979moral}. When the first-order approach is valid and the solution is interior,
$1/u'(\xi(x))=\lambda+\mu\,\partial_a\log f(x,a)$, and taking $\E^{a}$ on both sides gives $\lambda=\E^{a}[1/u'(\xi)]$, so that
\eqref{e:derivada-V} is the envelope theorem $V'=-\lambda$. 
\end{enumerate}
\end{rem}


{\small
    \bibliographystyle{plainnat}
\bibliography{ref}
}

\end{document}